\documentclass[12pt]{article}
\oddsidemargin  3pt   \topmargin -1.0cm \setlength{\textwidth}
{15.5cm} \setlength{\textheight} {22.5cm}

\pdfoutput=1

{\begin{Sbox}\begin{minipage}}%
{\end{minipage}\end{Sbox}\fbox{\TheSbox}}%

\usepackage[psamsfonts]{amssymb}
\usepackage{amsmath, amsthm, anysize, enumerate, color, url}
\usepackage{graphicx}
\usepackage{subcaption}
\usepackage{multirow}
\usepackage{footnote}
\usepackage{epstopdf}
\usepackage{epsfig}
\usepackage[ruled,vlined]{algorithm2e}
\usepackage[sectionbib]{natbib}

\usepackage{threeparttable}

\usepackage[colorlinks, breaklinks,
                   linkcolor=red,
                  citecolor=blue
                  ]{hyperref}

\usepackage{breakurl}

\newtheorem{theorem}{Theorem} %%%[section]
  %%[theorem]

\newtheorem{lemma}{Lemma} %%%[theorem]
 %%%[theorem]

% % the following creates defns with the usual roman font, not in italics
%%[section]

% Not in italics
\theoremstyle{definition}
\newtheorem{remark}{Remark}  %%%[theorem]

\newtheorem{condition}{Condition}

% Notational convenience

\newcommand{\R}{\mathbb{R}}

\newcommand{\PP}{\mathbb{P}}

\renewcommand{\hat}{\widehat}
\renewcommand{\tilde}{\widetilde}

\usepackage{setspace}

\begin{document}
\title{A degree-corrected Cox model for dynamic networks\footnote{The authors are listed in the alphabetical order.}}

\makeatother

\author{Yuguo Chen\footnote{Department of Statistics, University of Illinois at Urbana-Champaign, Champaign, IL 61820.
\texttt{Email:} yuguo@illinois.edu.}, 
Lianqiang Qu\footnote{School of Mathematics and Statistics, Central China Normal University,  Wuhan, Hubei, 430079, P.R.China.
\texttt{Email:} qulianq@ccnu.edu.cn.}
, 
Jinfeng Xu\footnote{Department of Biostatistics, City University of Hong Kong, Tat Chee Avenue, Kowloon, Hong Kong.
\texttt{Email:} jinfengxu@gmail.com.}
, 
Ting Yan\footnote{Department of Statistics, Central China Normal University, Wuhan, P.R. China, 430079.
\texttt{Email:} tingyanty@mail.ccnu.edu.cn.}
, 
Yunpeng Zhou\footnote{Department of Statistics and Actuarial Science, The University of Hong Kong, Hong Kong.
\texttt{Email:} u3514104@connect.hku.hk.}
\\
\footnotemark[2] University of Illinois at Urbana-Champaign,  \\
\footnotemark[3] \footnotemark[5]\ \ Central China Normal University, \\
\footnotemark[4] City University of Hong Kong, \\
\footnotemark[6] The University of Hong Kong
}
\date{}

\maketitle
\begin{abstract}{
Continuous time network data have been successfully modeled by 
multivariate counting processes,
in which the intensity function is characterized by covariate information.
However, degree heterogeneity has not been incorporated into the model 
which may lead to large biases for the estimation of homophily effects. 
In this paper, we propose a degree-corrected Cox network model 
to simultaneously analyze the dynamic degree heterogeneity and homophily effects for continuous time directed network data. 
Since each node has individual-specific in- and out-degree effects in the model,
the dimension of the time-varying parameter vector grows with the number of nodes, 
which makes the estimation problem non-standard. 
We develop a local estimating equations approach to estimate unknown time-varying parameters,
and establish consistency and asymptotic normality of 
the proposed estimators by using the powerful martingale process theories.
We further propose test statistics to test for trend and degree heterogeneity in dynamic networks. 
Simulation studies are provided to assess the finite sample performance of the proposed method and
a real data analysis is used to illustrate its practical utility.
}
% {When all the parameters are bounded}, we have $\|\widehat\beta-\beta\|_\infty=O_p\left( n^{-1/2} \right)$ and $\|\widehat{\gamma} - \gamma\|_\infty=O_p\left( n^{-1} \right)$, up to a logarithm factor.
% Further, we derive a asymptotic representation of the moment estimator, based on which, we derive their asymptotic normal distributions
% under classical CLT conditions.
% Three applications are provided to illustrate the unified theoretical result.
% Numerical studies and a real data analysis demonstrate our theoretical findings.

%\vskip 5 pt \noindent
\textbf{Key words}:    Degree heterogeneity; Dynamic Network; Homophily; Kernel smoothing; Multivariate counting process.\\

\end{abstract}

\vskip 5pt

%%Running title: Undirected network models

\section{Introduction}
\label{section-introduction}

Networks are very common in a wide variety of fields, 
including social sciences, biological sciences, transportation systems, and power grids.
In networks, nodes are used to represent the entities of interest, 
and edges are used to represent interactions among the nodes.
For instance, in an email network, nodes represent users and edges represent email communications between users.
Statistical models are useful tools to analyze the interactions in networks 
(e.g., \citealp{Goldenberg2010, Fienberg2012}). See \cite{Kolaczyk2009} for a comprehensive review
on the statistical analysis of network data.

In many networks, the interactions (such as emails and phone calls) between nodes vary over time.
Modelling and inferring from such dynamic network data has attracted great interests in recent years.
One common approach is to aggregate network data on predefined time intervals to obtain a sequence of discrete time-stamped snapshots of random graphs with unweighted or weighted edges.
See for example, temporal exponential random graph models
\citep{hanneke2010discrete,krivitsky2014separable}, dynamic stochastic block models \citep{yang2011detecting,matias2017statistical,pensky2019}, and dynamic latent space models \citep{sewell2015analysis,sewell2015latent}.
\iffalse
\cite{krivitsky2014separable} developed a temporal exponential random graph model that can separate the
formation and dissolution of ties independently. 
\cite{sewell2015latent} studied a dynamic latent space model, in which the Markov process is applied to model the latent position transition. 
More related works are referred to \cite{hanneke2010discrete, yang2011detecting, sewell2015analysis, matias2017statistical, pensky2019}.
These methods model the network using discrete time-stamped snapshots,
in which data are commonly aggregated on predefined time intervals to obtain a sequence of snapshots of random graphs. But in many applications, observation times are indeed continuous and irregular. Examples include e-mail networks \citep{perry2013point}, Twitter direct messages networks \citep{DBS2013}, or bike share networks \citep{MRV2018}. The aggregation of data may lead to the loss of network structure information, and the estimate results also strongly depend on the choice of the time intervals, so developing continuous-time models is important.
\fi
However, observation/interaction times are continuous and irregular in many scenarios, 
including email networks \citep{perry2013point}, Twitter direct messages networks \citep{DBS2013}, and bike share networks \citep{MRV2018}. 
As discussed in \cite{perry2013point}, inference 
based on transforming the interaction counts into binary edges depends on the choice of the threshold, 
which may lead to dramatically different networks and conclusions \citep{Choudhury2010}. 
In addition, the aggregation of data also depends crucially on the choice of the time intervals which may lead to different networks and inference as well.
Therefore, it is more desirable to develop continuous-time network models to make full use of the data.

One natural way to model continuous time interactions amongst nodes is using counting processes (e.g., \citealp{butts20084,
	Vu-hunter2011dynamic,DBS2013, MRV2018}). 
For example, \cite{perry2013point} proposed a Cox-type regression model for intensity function
to analyze dynamic directed interactions, and developed partial likelihood inference for their model.
Instead of using intensity functions, \cite{Sit2020} proposed to use the rate function to model directed interactions.
However, both \cite{perry2013point} and \cite{Sit2020} assumed that the regression parameters are constant over time.
Since the regression parameters often change with time, it is important to know the
time-varying effects of covariates on interactions.
Recently, \cite{kreib2019} extended \citeauthor{perry2013point}'s work to time-varying coefficients Cox model 
that can characterize temporal effects of covariates on interactions, 
and established pointwise consistency and asymptotic normality of the local maximum likelihood estimator.

The above literature on counting processes for dynamic networks mainly focus on 
accounting for the effects of covariates. They do not consider degree heterogeneity, which means individuals exhibit substantial variation in their interactions with others. 
This is another important feature of many real-world networks.  
In our simulation studies, 
we found that the estimator of homophily parameters has a large bias when the degree heterogeneity exists but is not considered in the model; see Figures \ref{fig:het}.
This phenomenon was also observed by \cite{Graham2017} in static undirected networks, where the presence of ``hub" nodes (i.e., nodes with high degrees),
who form many interactions with nodes of different kinds,
effectively attenuates measured network homophily. 
Here homophily means individuals with similar covariate values are easier to form connections with each other.

% To illustrate the identification issues involved,
% in Figure \ref{fig:Toyexample} we depict a network that is randomly generated by model \eqref{eq2.1} with a strong structural taste for homophily. There are two types of nodes, colored ``green" and ``orange”, respectively. Also, three nodes are ``hubs" (i.e., nodes with high degree) in the network. Whether a node is a hub or not is unobserved by the researcher. We see that the hub nodes form many interactions irrespective of type, which leads to that the frequency of interactions across homphilous nodes is comparable to that across heterophilous nodes. For example, in Figure \ref{fig:Toyexample} (a), six of thirteen edges are heterophilic. Such a phenomenon is also observed by \cite{Graham2017} for static and undirected network. When a researcher fits common dynamic models of link formation to these data, he/she might fail to conclude that preferences are, in fact, homophilic over time. 
% The presence of ``hub" nodes effectively attenuate measured network homophily.

In this article, we aim to model dynamic degree heterogeneity and homophily effects simultaneously for continuous time directed network data.
Our contributions are three-folds.
First, we propose a novel model, called degree-corrected Cox network model in \eqref{eq2.1}, to address 
dynamic degree heterogeneity 
across the $n$ nodes in the network.
The new model contains a set of $2n$ time-varying degree parameters measuring dynamic degree heterogeneity 
and a $p$-dimensional time-varying regression coefficient for dynamic homophily effect.
A local estimating equations method is developed to estimate $2n+p$ unknown time-varying parameters. 
Second, we establish consistency and asymptotic normality of the proposed estimators by combining
the martingale process theories and kernel smoothing methods. 
The main challenge is that the dimension of the parameter vector grows with the number of nodes in the network, 
that is, our setting is in the high-dimensional paradigm. This is different from \cite{perry2013point} and \cite{ kreib2019}, 
where the number of unknown parameters of interest is fixed, 
so their methods cannot be applied here.
Third, in practice it is not always clear whether a network has degree heterogeneity, especially when the network is sparse.
Thus, we further propose test statistics to check whether there is degree heterogeneity in a dynamic work.
To the best of our knowledge, such a test has not yet been explored in existing literature.

The rest of this paper is organized as follows.  
Section \ref{section:model} introduces our proposed degree-corrected Cox network model. 
Section \ref{section-approach} develops a local estimating equations approach to estimate the $2n+p$ parameter functions.
Section \ref{section:asymptotic} establishes 
uniform consistency of the estimators and their point-wise central limit theorems, and constructs confidence intervals  for parameters. 
Section \ref{section:test} develops testing statistics to test for trend and degree heterogeneity.
Section \ref{section:simulation} provides simulation studies and presents an application to a real data analysis. 
All the proofs are presented in the supplementary material.
%Appendix.

\section{Degree-corrected Cox network model}
\label{section:model}

We consider a network with $n$ nodes labelled as ``$1, \ldots, n$", and study directed interaction processes amongst nodes. A directed interaction from head node $i$ to tail node $j$ 
means a direct edge from $i$ to $j$.
We use $[n]$ and $[n-1]_n$ to denote the integer sets $\{1,\dots,n\}$ and $\{n+1,\dots,2n-1\},$ respectively. %$\sum_{j\ne i}$ is a shorthand of $\sum_{j=1,j\neq i}$.
For any two nodes $i\neq j\in[n]$, define 
$N_{ij}(s,t)$ as the number of directed interactions from head node $i$ to tail node $j$ in the time interval $(s,t].$ 
Write $N_{ij}(t)=N_{ij}(0,t)$. Without loss of generality, we assume that the interaction process
starts at $t = 0$ with $N_{ij}(0) = 0$
for $1\le i\not=j \le n.$
The counting process $\{N_{ij}(t): t\ge 0\}$ encodes occurrences of directed interactions  from head node $i$ to tail node $j$.

Let $Z_{ij}(t)$ be a $p$-dimensional external covariate process for the node pair $(i,j).$
The covariate $Z_{ij}(t)$ can be used to
measure the similarity or dissimilarity of individual-level characteristics. 
For example, if individual $i$ has a $d$-dimensional characteristic $X_i(t)$,
the pairwise covariate $Z_{ij}(t)$ can be constructed by setting $Z_{ij}(t)=X_i(t)^\top X_j(t)$,
$Z_{ij}(t)=\|X_i(t)-X_j(t)\|_2$, or $Z_{ij}(t)=X_i(t)\otimes X_j(t)$. Here $\|x\|_2$ denotes the $\ell_2$-norm of any vector $x\in\mathbb{R}^d$ and
$x\otimes y$ denotes the Kronecker product of the vector $x$ and the vector $y$. 

The observations consist of $\{N_{ij}(t), Z_{ij}(t): i\neq j\in[n],~ t\in [0,\tau]\},$
where $\tau$ is the termination time of the observation period.
Let $\mathcal{F}_t$ denote the $\sigma$-filtration that represents everything that happened up to time $t$. 
Define 
\[\lambda_{ij}(t|\mathcal{F}_t)=\lim_{\Delta t\rightarrow 0^+}P\big(N_{ij}((t+\Delta t)^-)-N_{ij}(t^-)=1|\mathcal{F}_t\big)/\Delta t\] 
as the intensity function of $N_{ij}(t),$
where $N_{ij}(t^-)$ denotes the number of interactions before time $t$.
We propose the following model for the intensity function of $N_{ij}(t)$:
\begin{align}
	\label{eq2.1}
	\lambda_{ij}(t|\mathcal{F}_{t})
	&=\exp\{\alpha_i(t)+\beta_j(t)+Z_{ij}(t)^\top\gamma(t)\}, \quad 1\le i\not= j \le n,
\end{align}
where $\alpha_i(t)$ denotes the outgoingness of node $i$, $\beta_j(t)$ denotes the popularity of node $j$,
and $\gamma(t)$ is a common regression coefficient function.
As discussed in \cite{perry2013point} and \cite{yan2019},  $\gamma(t)$ concisely captures and estimates the homophily effects of covariates.
The larger the term $Z_{ij}(t)^\top\gamma(t)$ is, the more likely homophilous nodes interact with each other.

Similar to the explanations of model parameters in the $p_1$ model for static networks \citep{Paul:Leinhardt1981}, 
the parameters $\alpha_i(t)$ and  $\beta_j(t)$ measure degree heterogeneity.
To see this clearly in our model, we consider the special case 
that $N_{ij}(t)$ is a Poisson process.
Note that the out- and in-degrees for node $i$ in time interval $(s,t]$
are $\sum_{k\neq i}N_{ik}(s,t)$ and $\sum_{i\neq k}N_{ik}(s,t),$ respectively.
In the case of the Poisson process, we have
\begin{align}
	\sum_{k=1,k\neq i}^nE\{N_{ik}(s,t)\}=&\int_{s}^t\exp\{\alpha_i(u)\}\sum_{k\neq i}^n\exp\{\beta_k(u)
	+Z_{ik}(u)^\top\gamma(u)\}du, \label{eq2.2}\\
	\sum_{k=1,k\neq i}^nE\{N_{ki}(s,t)\}=&\int_{s}^t\exp\{\beta_i(u)\}\sum_{k\neq i}^n\exp\{\alpha_k(u)
	+Z_{ki}(u)^\top\gamma(u)\}du. \label{eq2.3}
\end{align}
As we can see, the larger $\alpha_i(t)$ is, the more likely node $i$ interacts with others.
Therefore, $\alpha_i(t)~(i\in[n])$  accommodates the out-degree heterogeneity across different nodes.
On the other hand,  the larger $\beta_i(t)$ is, the more likely node $i$ receives  interactions from other nodes,
that is, $\beta_i(t)$ reflects the in-degree heterogeneity.
As a result, the effect of degree heterogeneity can be clearly delineated by estimating 
the node-specific parameters $\alpha_i(t)$ and $\beta_j(t)$.

In model \eqref{eq2.1}, if we treat 
$\lambda_{0,ij}(t):=\exp\{\alpha_i(t)+\beta_j(t)\}$ as the baseline intensity function,
it reduces to a network version of the well known Cox regression model. 
Because of this fact, we call model \eqref{eq2.1} the \emph{degree-corrected Cox network model}.
Note that if one transforms $\alpha_i(t)+\beta_j(t)$ to $[\alpha_i(t)+c(t)]+[\beta_j(t)-c(t)]$ by a unknown function $c(t),$
then model \eqref{eq2.1} does not change.
For the identifiability of model \eqref{eq2.1}, 
in what follows we set $\beta_n(t)=0$ as in \cite{Yan:Leng:Zhu:2016}.

Now, we discuss the differences between our model and those proposed by \cite{perry2013point}, \citet{kreib2019} and \citet{Alexander2021}.
In model \eqref{eq2.1}, if we multiply by an indicator function of the receiver set of sender $i$ and set $\beta_j(t)=0~(j\in[n])$ and $\gamma(t)=\gamma$,
then it reduces to \citeauthor{perry2013point}'s model. 
There are two major differences between model \eqref{eq2.1} and \citeauthor{perry2013point}'s model: 
(1) the regression coefficient $\gamma$ is a constant over time in \citeauthor{perry2013point}'s model while it is a unknown function $\gamma(t)$ in model \eqref{eq2.1}.
Our estimates of $\gamma(t)$ can be used for statistical inference on time changes in the effects of covariates;
(2) only out-degree  heterogeneity is taken into account in \citeauthor{perry2013point}'s model while both out- and in-degree heterogeneity are characterized in our model. 
The node popularity measured by in-degree parameter $\beta_j(t)$ is another important network feature 
\citep{Sengupta-Chen-2017}. To see how  model \eqref{eq2.1} captures this feature, we compute the log-ratio of
$u_i^{(1)}(0,t)$ to $u_j^{(1)}(0,t)$, where $u_i^{(1)}(0,t)$ denotes the first derivative of $u_i(0,t)=\sum_{k\neq i}^nE\{N_{ki}(0,t)\}$ with respect to $t$. 
Then we have
{\begin{eqnarray*}
		\log\bigg(\frac{u_{i}^{(1)}(0,t)}{u_{j}^{(1)}(0,t)}\bigg)
		=\beta_i(t)-\beta_j(t)+\log\bigg(\frac{\sum_{k\neq i}\exp\{\alpha_k(t)+Z_{ki}(t)^\top\gamma(t)\}}
		{\sum_{k\neq j}\exp\{\alpha_k(t)+Z_{kj}(t)^\top\gamma(t)\}}\bigg).
\end{eqnarray*}}
As we can see, node $i$ tends to be more popular in a network 
than node $j$ if $\beta_i(t)>\beta_j(t)$ at time $t$.
Under \citeauthor{perry2013point}'s model, we note that the ratio does not contain the difference $\beta_i(t)-\beta_j(t)$.
In addition, \cite{perry2013point} treated the baseline intensity function as a nuisance parameter and focused on estimating $\gamma$.

In  model \eqref{eq2.1}, if $\alpha_i(t)=\alpha(t)$, $i\in[n]$, and $\beta_j(t)=\beta(t)$, $j\in[n]$,
then after transforming the baseline function to $\lambda_{0,ij}(t)=\exp\{\alpha(t)+\beta(t)\}$,
it becomes the model proposed by
\citet{kreib2019} and \citet{Alexander2021}. 
One drawback of the model in \citet{kreib2019} and \citet{Alexander2021} is that
they set the same degree parameters for all nodes which neglects the effect of degree heterogeneity in real-world networks. 
In model \eqref{eq2.1}, our interest is on estimating not only $\gamma(t)$, but also the $2n$ node-specified parameters $\alpha_i(t)$ and $\beta_j(t)$.  
The dimension of parameters increases with the number of nodes in model \eqref{eq2.1}, 
which is more difficult than the case with fixed dimensional parameters in \citet{kreib2019} and \citet{Alexander2021}.   
It requires the development of new  methods for statistical inference; see Sections \ref{section-approach}-\ref{section:test} below. 

Finally, our model allows the network to have different edge densities. In particular, it allows the network to be sparse.
We call a dynamic network sparse if the ratio of the expected number of edges to $n^2$ over any time interval $(s,t]\subset(0,\tau)$ 
goes to zero as $n\rightarrow\infty.$
Consider the case that $\sup_{t\in[0,\tau]}[\alpha_i(t)+\beta_j(t)+Z_{ij}(t)^\top\gamma(t)]=-q_n,$ 
where $q_n$ is a positive constant depending on $n.$  
Then, by \eqref{eq2.2} and \eqref{eq2.3}, the expected number of edges over time $(s,t]$ is   
\begin{align}\label{eq2.4}
	\sum_{i=1}^n\sum_{j\neq i}\int_s^t\exp\{\alpha_i(u)+\beta_j(u)+Z_{ij}(u)^\top\gamma(u)\}du \leq (t-s)n^2e^{-q_n}.
\end{align}
%As we can see, 
Therefore, $q_n$ determines the sparsity level of the networks.
The larger  $q_n$ is, the sparser the network becomes.
If we set $q_n=c_0\log(n)$ with a constant $c_0\in (0,1]$,
then the order of the right hand side of \eqref{eq2.4} is $n^{2-c_0},$
which is less than $n^2$, and the dynamic network is sparse.

\section{Local estimating equation approach}
\label{section-approach}

Let  $\alpha(t)=(\alpha_1(t),\dots,\alpha_n(t))^\top$ and
$\beta(t)=(\beta_1(t),\dots,\beta_{n-1}(t))^\top$.
Define $\theta(t)=(\gamma(t)^\top,\alpha(t)^\top,$ $\beta(t)^\top)^\top$.
We use the superscript * to denote the true value (e.g., $\theta^*(t)$ is 
the true value of $\theta(t)$). 
For convenience, define  $N=n(n-1)$. 
Let $\mathcal{K}_h(u)=h^{-1}\mathcal{K}(u/h),$ where $\mathcal{K}(\cdot)$ is a kernel function and $h$ denotes the bandwidth.
Further, we define
\[
\mathcal{M}_{ij}(t; \alpha_i(t),\beta_j(t),\gamma(t))=N_{ij}(t)-\int_0^t\exp\{\alpha_i(s)+\beta_j(s)+Z_{ij}(s)^\top\gamma(s)\}ds.
\]
Under model \eqref{eq2.1}, $\mathcal{M}_{ij}(t)=\mathcal{M}_{ij}(t; \alpha_i^*(t),\beta_j^*(t),\gamma^*(t))$ is a zero-mean martingale process 
(see Lemma 2.3.2 in \citealp{FH1991}).
Assume that $\alpha_i^*(t),$ $\beta_j^*(t)$ and $\gamma^*(t)$ are sufficiently smooth in the sense that as $s\to t$,
\begin{align*}
	\max_{i\in[n]} |\alpha_i^*(s)- \alpha_i^*(t)| \to 0, \quad
	\max_{j\in[n-1]} |\beta_j^*(s) - \beta_j^*(t) | \to 0, \quad
	\max_{k\in[p]} |\gamma_k^*(s) - \gamma_k^*(t)| \to 0.
\end{align*}
When $s$ is close to $t,$ this leads to
\begin{align*}
	d\mathcal{M}_{ij}(s)=&dN_{ij}(s)-\exp\{\alpha_i(s)+\beta_j(s)+Z_{ij}(s)^\top\gamma(s)\}ds\\
	\approx & dN_{ij}(s)-\exp\{\alpha_i^*(t)+\beta_j^*(t)+Z_{ij}(s)^\top\gamma^*(t)\}ds,
\end{align*}
where $dN_{ij}(t)=\lim_{\Delta t\rightarrow 0}[N_{ij}((t+\Delta t)^-)-N_{ij}(t^{-})].$
For simplicity, let $d\mathcal{M}_{ij}(s,t)=dN_{ij}(s)-\exp\{\alpha_i^*(t)+\beta_j^*(t)+Z_{ij}(s)^\top\gamma^*(t)\}ds.$
Based on these facts, we propose the following local estimating equation
\begin{align}\label{eq2.5}
	(F(\theta(t))^\top, Q(\theta(t))^\top)^\top=0, 
\end{align}
where $F(\theta(t))=(F_1(\theta(t)),\dots,F_{2n-1}(\theta(t)))^\top$ with
\begin{align*}
	F_i(\theta(t))=&\frac{1}{n-1}\sum_{j\neq i}\int_{0}^{\tau}\mathcal{K}_{h_1}(s-t)d\mathcal{M}_{ij}(s,t),~~i\in[n],
	\\
	F_{n+j}(\theta(t))=&\frac{1}{n-1}\sum_{i\neq j}\int_{0}^{\tau}\mathcal{K}_{h_1}(s-t)d\mathcal{M}_{ij}(s,t),~~j\in[n-1],
\end{align*}
and
$$Q(\theta(t))=\frac{1}{N}\sum_{i=1}^n \sum_{j\neq i}
\int_{0}^{\tau}Z_{ij}(s)\mathcal{K}_{h_2}(s-t)d\mathcal{M}_{ij}(s,t).
$$
We estimate $\theta^*(t)$ by the solution to equation (\ref{eq2.5}), denoted by $\widehat{\theta}(t)=(\widehat{\gamma}(t)^\top,\widehat{\alpha}(t)^\top, \widehat{\beta}(t)^\top)^{\top}$.  

Now, we discuss the algorithm for solving \eqref{eq2.5}.
We adopt a combination of the fixed point iterative method and the Newton-Raphson method 
by alternatively  solving $F(\theta(t))=0$ and $Q(\theta(t))=0$. 
This is implemented in Algorithm \ref{algorithm-a}, where {\bf Step 1} is about
solving $F(\theta(t))=0$ with a given $\gamma(t)$ via the fixed point iterative method,
and {\bf Step 2} is about solving $Q(\theta(t))=0$ with given $\alpha(t)$ and $\beta(t)$
via the Newton-Raphson method. 
The stopping criterion in {\bf Step 3} is 
\begin{equation}
	\label{stop_cri}
	\epsilon^{(k)}=\max_{i\in[n]}|\alpha_i^{[k]}(t)-\alpha_i^{[k-1]}(t)|+
	\max_{i\in[n-1]}|\beta_i^{[k]}(t)-\beta_i^{[k-1]}(t)|+
	\max_{j\in[p]}|\gamma_j^{[k]}-\gamma_j^{[k-1]}|\le 10^{-3},
\end{equation}
which has good performances in simulation studies and real data analyses in Section \ref{section:simulation}.

\begin{algorithm}
	\caption{An algorithm for solving local estimating equations}
	\label{algorithm-a}
	{\bf Input}: $k=0,~\alpha_i^{[0]}(t)=0,~\beta_j^{[0]}(t)=0,~\gamma^{(0)}(t)=0$ and $\epsilon^{(0)}=1$\;
	\While{$\epsilon^{(k)}>10^{-3}$}
	{Set $k\leftarrow k+1$\;
		{\bf Step 1}: Calculate $\alpha_i^{[k]}(t)$ and $\beta_j^{[k]}(t)$ by
		\begin{align*}
			\alpha_i^{[k]}(t)=&\log\Bigg\{\frac{\sum_{j\neq i}\int_0^{\tau}\mathcal{K}_{h_1}(s-t)dN_{ij}(t)}
			{\sum_{j\neq i}\int_0^{\tau}\mathcal{K}_{h_1}(s-t)\exp\{\beta_j^{[k-1]}(t)+Z_{ij}(s)^\top\gamma^{[k-1]}(t)\}ds}\Bigg\},\\
			\beta_j^{[k]}(t)=&\log\Bigg\{\frac{\sum_{i\neq j}\int_0^{\tau}\mathcal{K}_{h_1}(s-t)dN_{ij}(t)}
			{\sum_{i\neq j}\int_0^{\tau}\mathcal{K}_{h_1}(s-t)\exp\{\alpha_i^{[k-1]}(t)+Z_{ij}(s)^\top\gamma^{[k-1]}(t)\}ds}\Bigg\}.
		\end{align*}
		
		{\bf Step 2}: Update $\gamma^{[k]}(t)$ by the solution to
		\[
		\frac{1}{N}\sum_{i=1}^n \sum_{j\neq i}
		\int_{0}^{\tau}Z_{ij}(s)\mathcal{K}_{h_2}(s-t)\big[dN_{ij}(s)- \exp\big\{\alpha_i^{[k-1]}(t)+\beta_j^{[k-1]}(t)+Z_{ij}(s)^\top\gamma(t)\big\}ds\big]=0.
		\]
		
		{\bf Step 3}: Calculate $\epsilon^{(k)}$ according to (\ref{stop_cri})
	}
	
	{\bf Output}: $\widehat\alpha_i(t)$, $\widehat\beta_j(t)$ and $\widehat\gamma(t).$
\end{algorithm}

\section{Theoretical properties}
\label{section:asymptotic}

In this section, we present consistency and asymptotic normality of the estimator.
To obtain consistency for $\widehat{\theta}(t)$, we adopt a two-stage procedure.
Let $\eta(t)=(\alpha(t)^\top,\beta(t)^\top)^\top$
and $\widehat{\eta}_{\gamma}(t)$ be the estimator obtained by solving $F(\theta(t))=0$ with a given $\gamma(t)$. Define $\widehat Q_c(\gamma(t))$
as the profiled function of $Q(\theta(t))$ obtained by replacing $\eta(t)$
with $\widehat{\eta}_\gamma(t)$. It is clear that $\widehat Q_c(\widehat{\gamma}(t))=0$.
In the first stage, we establish the existence of $\widehat{\eta}_{\gamma}(t)$
and derive its consistency rate uniformly in $t\in[a,b]$ (see Lemma 6 in the supplementary material).
In the second stage, we derive the upper bound of the error between $\widehat{\gamma}(t)$ and $\gamma^*(t)$ by using the profiled function $\widehat Q_c(\gamma(t))$.
%Finally, the upper bound of $\widehat\eta(t)$ from the first stage automatically holds.

For convenience, define  $\pi_{ij}(t)=\alpha_i(t)+\beta_j(t)+Z_{ij}(t)^\top\gamma(t)$ and
$\pi_{ij}^*(t)=\alpha_i^*(t)+\beta_j^*(t)+Z_{ij}(t)^\top\gamma^*(t)$.
Let $\mathbb{V}(t;\eta(t),\gamma(t))$ denote the Hessian  matrix of \eqref{eq2.5} at $\theta(t)$,
and write
\[
\mathbb{V}(t;\eta(t),\gamma(t))=
\begin{pmatrix}
	V_{\eta,\eta}(t,\gamma(t)), & V_{\eta,\gamma}(t)\\
	V_{\gamma,\eta}(t), & V_{\gamma,\gamma}(t)
\end{pmatrix}.
\]
%Define
%\begin{align*}
%\bar H_{Q}(\eta(t), \gamma(t))=V_{\gamma,\gamma}(t)-V_{\gamma,\eta}(t)
%\big[V_{\eta,\eta}(t,\gamma(t))\big]^{-1}V_{\eta,\gamma}(t).
%\end{align*}
%A direct calculation yields
%\begin{align*}
%\frac{ \partial \widehat{Q}_c(\gamma(t))}{ \partial \gamma(t)^\top }=\bar H_Q(\widehat\eta_\gamma(t), \gamma(t)).
%\end{align*}
Furthermore, define $V^*(t)=-\mathbb{E}\{V_{\eta^*,\eta^*}(t,\gamma^*(t))\}$
and $v_{i,j}^*(t)$ be the $(i,j)$th element of $V^*(t),$
where
\begin{align*}
	v_{i,i}^*(t)=&(n-1)^{-1}\sum_{j\neq i}\mathbb{E}\{e^{\pi_{ij}^*(t)}\},\quad~ i\in[n],\\
	v_{n+j,n+j}^*(t)=&(n-1)^{-1}\sum_{i\neq j}\mathbb{E}\{e^{\pi_{ij}^*(t)}\},\quad~ j\in[n-1],\\
	v_{i,n+j}^*(t)=&v_{n+j,i}(t)=(n-1)^{-1}\mathbb{E}\{e^{\pi_{ij}^*(t)}\},~i\in[n],~j\in[n-1],~i\neq j,
\end{align*}
and $v_{i,j}^*(t)=v_{j,i}^*(t)=0$ otherwise.
Let $v_{2n,2n}^*(t)=\sum_{i=1}^nv_{ii}^*(t)-\sum_{i=1}^n\sum_{j=1,j\neq i}^{2n-1}v_{ij}^*(t).$
%More details about the definition of $V^*(t)$ can be found in Lemma ????.
Then, define $S^*(t)=(s_{ij}^*(t))\in \mathbb{R}^{(2n-1)\times (2n-1)},$
where
\begin{align*}
	s_{ij}^*(t)=
	\begin{cases}
		\frac{\delta_{ij}}{v_{i,i}^*(t)} + \frac{1}{v_{2n,2n}^*(t)}, & i, j\in [n]~~
		\text{or}~~ i,j=[n-1]_n,\\
		\frac{1}{v_{2n,2n}^*(t)}, &~~~\text{otherwise}.
	\end{cases}
\end{align*}
In the above equation, $\delta_{ij}$ is an indicator function, i.e., 
$\delta_{ij}=1$ if $i=j$ and $\delta_{ij}=0$ otherwise.
Let $\|a\|_{\infty}=\max_{1\le i\le n}|a_i|$ denote the $\ell_{\infty}$-norm for any vector $a=(a_1,\dots,a_n)^\top.$ 

Before presenting consistency and asymptotic normality of the estimator,
we introduce the following conditions.

\begin{condition}\label{condition:CB1}
	%$\max_{ij}N_{ij}(\tau)<C_0$ almost surely for some constant $C_0>0.$
	$\max_{i,j} \|Z_{ij}(t) \|_\infty\le \kappa_n$ almost surely,
	where $\kappa_n$ could diverge with $n$.
\end{condition}

\begin{condition}\label{condition:PB2}
	The parameter functions $\alpha_i^*(t),~\beta_j^*(t)$ and $\gamma^*(t)$ are twice continuously differentiable.
	In addition, each element of $\gamma^{(l)}(t)~(l=0,1,2)$ is a bounded function
	and
	\begin{align*}
		\theta^*(t)\in\mathcal{B}=\bigg\{\theta(t):
		\max_{i,j}\sup_{t\in[a,b]}\big|\alpha_i^{(l)}(t)+\beta_j^{(l)}(t)+Z_{ij}(t)^\top\gamma^{(l)}(t)\big|\le q_n~(l=0,1,2)\bigg\},
	\end{align*}
	where $q_n$ could diverge with $n$.
\end{condition}

\begin{condition}\label{condition:H3}
	$H_Q(\gamma(t))=\lim_{n\rightarrow\infty}\mathbb{E}\{V_{\gamma,\gamma}(t)-V_{\gamma,\eta^*}(t)
	V_{\eta^*,\eta^*}(t,\gamma(t))^{-1}V_{\eta^*,\gamma}(t)\}$
	is continuous in a neighbourhood of $\gamma^*(t)$ and
	\begin{align*}
		\inf_{t\in[a,b]}\rho_{\min}\big\{H_Q(\gamma^*(t))\big\}> \varsigma>0,
	\end{align*}
	where $\rho_{\min}(M)$ denotes the smallest eigenvalue of the matrix $M,$
	and $\varsigma$ is some constant.
\end{condition}

\begin{condition}\label{condition:kernel4}
	$\mathcal{K}(x)$ is a symmetric density function with support $[-1,1].$
\end{condition}

\begin{condition}\label{condition:bandwidth5}
	$nh_1^2\rightarrow\infty,$ $nh_2\rightarrow\infty,$ $ne^{2q_n}h_1^5\rightarrow 0$
	and $ne^{q_n}h_2^{5/2}\rightarrow 0$ as $n\rightarrow \infty.$
	Moreover, $h_2=O(h_1^2).$
\end{condition}

Condition \ref{condition:CB1} assumes that covariates are bounded above by $\kappa_n$ uniformly.
If $Z_{ij}(t)$ is a binary predictive variable,  Condition \ref{condition:CB1} automatically holds.
If $Z_{ij}(t)$'s are generated from normal distributions with variances bounded above by a constant, 
Condition \ref{condition:CB1} still holds with $\kappa_n=O( \log n)$.
Condition \ref{condition:PB2} requires that parameter functions are sufficiently smooth 
and the sum of parameter functions and the sum of their first and second derivatives are bounded above by $q_n$.
Condition \ref{condition:H3} is the assumption to ensure the identifiability of $\gamma^*(t)$.
Condition \ref{condition:kernel4} is a standard assumption in nonparametric statistics.
The kernel function affects the convergence rate of the estimators only by multiplicative constants and thus has little impact on the rate of convergence 
(e.g., \citealp{FG1996}). 
In Condition \ref{condition:bandwidth5}, 
$h_1$ and $h_2$ are the bandwidths for estimating $(\alpha(t), \beta(t))$ and $\gamma(t)$ in \eqref{eq2.5}
respectively. 
The orders of the bandwidths $h_1$ and $h_2$ are determined by the sample size $n$ and the sparsity parameter $q_n.$
If $q_n$ is an absolute constant,  the bandwidths can be chosen as $h_1=o(n^{-1/5})$ and $h_2=o(n^{-2/5}).$

\subsection{Consistency and asymptotic normality}
We first present consistency of $\widehat\theta(t)$, whose proof is given in the supplementary material.

\begin{theorem}\label{theorem:consistency}
	Suppose that Conditions \ref{condition:CB1}-\ref{condition:bandwidth5} hold.
	If $(q_n+1)e^{19q_n} \kappa_n^3\sqrt{\log(nh_1)/(nh_1)}\rightarrow 0$
	as $n\rightarrow \infty$,
	then the estimator $\widehat\theta(t)=(\widehat\eta(t)^\top,\widehat\gamma(t)^\top)^\top$, the solution to equation \eqref{eq2.2}, exists
	and satisfies
	\begin{align}
		&\sup_{t\in[a,b]}\|\widehat{\eta}(t)-\eta^*(t)\|_\infty=O_p\left((q_n+1)e^{13q_n}\kappa_n^3\sqrt{\frac{\log nh_1}{n h_1}}\right), \label{eq2.6}\\
		&\sup_{t\in[a,b]}\|\widehat{\gamma}(t)-\gamma^*(t)\|_\infty=O_p\left((q_n+1) e^{7q_n} \kappa_n^2 \sqrt{\frac{\log nh_1}{nh_1}}+\frac{1}{n^2h_2} \right). \label{eq2.7}
	\end{align}
\end{theorem}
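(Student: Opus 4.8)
The plan is the two-stage argument indicated after the statement: first control the degree estimator $\widehat\eta_\gamma(t)$ for a frozen regression function $\gamma$, then profile $\eta$ out and analyze $\widehat\gamma(t)$ through $\widehat Q_c$. Two ingredients do the work: (i) uniform-in-$t$ exponential inequalities for kernel-weighted martingale integrals, and (ii) an $\ell_\infty$-operator-norm control of the inverse of the structured Hessian block. For (i): since $\mathcal{M}_{ij}(t)=\mathcal{M}_{ij}(t;\alpha_i^*(t),\beta_j^*(t),\gamma^*(t))$ is a zero-mean martingale with predictable variation $\int_0^te^{\pi_{ij}^*(s)}ds$, and the localization error $d\mathcal{M}_{ij}(s,t)-d\mathcal{M}_{ij}(s)$ is $O(h_1)$ (resp.\ $O(h_2)$) by the smoothness in Condition~\ref{condition:PB2}, each coordinate of $F(\theta^*(t))$ and of $Q(\theta^*(t))$ is, up to a negligible bias, a kernel-weighted integral against a sum of mutually orthogonal martingales, hence itself a martingale. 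Applying a Freedman/Bernstein-type exponential inequality (boundedness of the counting-process jumps in a short window controls the higher moments) together with a discretization of $[a,b]$ into $O(1/h_1)$ points and the continuity of $\mathcal{K}$ to bridge the grid, I would obtain
\begin{align*}
	\sup_{t\in[a,b]}\|F(\theta^*(t))\|_\infty &= O_p\!\left(e^{q_n}\kappa_n\sqrt{\tfrac{\log(nh_1)}{nh_1}}\right),\\
	\sup_{t\in[a,b]}\|Q(\theta^*(t))\|_\infty &= O_p\!\left(e^{q_n}\kappa_n\sqrt{\tfrac{\log(nh_1)}{nh_1}}+\tfrac{1}{n^2h_2}\right),
\end{align*}
the $e^{q_n}$ tracking $\sup_{i,j}e^{\pi_{ij}^*(t)}$ via Condition~\ref{condition:PB2}, the $\kappa_n$ tracking $\|Z_{ij}(t)\|_\infty$ via Condition~\ref{condition:CB1}, the rate $\sqrt{\log(nh_1)/(nh_1)}$ being the effective standard deviation of an average of $n-1$ kernel-localized martingale integrals (the $\log(nh_1)$ from the union bound over the $O(n/h_1)$ coordinate/grid pairs), and the residual $1/(n^2h_2)$ absorbing the smoothing bias and bandwidth-mismatch term for $Q$.

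\emph{First stage (Lemma 6: existence and $\ell_\infty$ rate of $\widehat\eta_\gamma$).} Fix $\gamma$ in a shrinking neighbourhood of $\gamma^*$. The random Hessian block $-V_{\eta,\eta}(t,\gamma(t))$ concentrates around the deterministic matrix $V^*(t)$ whose entries are listed before the statement; $V^*(t)$ has the bordered-diagonal ``$\beta$-model'' structure, so its inverse is approximated entrywise by $S^*(t)$ and, by a diagonal-dominance (Hillar--Wibisono-type) argument, $\sup_{t\in[a,b]}\|(-V_{\eta,\eta}(t,\gamma(t)))^{-1}\|_\infty$ is bounded by a fixed power of $e^{q_n}$ (since each $v_{ii}^*(t)$ is of order at least $e^{-q_n}$). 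Taylor-expanding $F$ in $\eta$ around $\eta^*(t)$ and bounding the quadratic and cubic remainders (which contribute the extra $q_n+1$ and $\kappa_n^3$ factors, as $|\partial^2 F|,|\partial^3 F|\lesssim e^{q_n}\kappa_n^3$), I would run Brouwer's fixed-point theorem on the ball $\{\eta:\|\eta-\eta^*(t)\|_\infty\le r_n\}$ with $r_n=C(q_n+1)e^{13q_n}\kappa_n^3\sqrt{\log(nh_1)/(nh_1)}$: the update map $\eta\mapsto\eta+\widehat S\,F((\gamma^\top,\eta^\top)^\top;t)$ (with $\widehat S$ the approximate inverse of $-V_{\eta,\eta}$) maps this ball into itself precisely because the hypothesis $(q_n+1)e^{19q_n}\kappa_n^3\sqrt{\log(nh_1)/(nh_1)}\to0$ forces the remainder (which carries the extra $e^{6q_n}$ lost in the inversion) to be of smaller order than $r_n$. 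This gives the existence of $\widehat\eta_\gamma(t)$ for all $t\in[a,b]$, the bound \eqref{eq2.6} at $\gamma=\gamma^*$, and a Lipschitz estimate $\sup_t\|\widehat\eta_\gamma(t)-\widehat\eta_{\gamma^*}(t)\|_\infty\lesssim e^{q_n}\kappa_n\sup_t\|\gamma(t)-\gamma^*(t)\|_\infty$.

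\emph{Second stage (profiling out $\eta$).} Put $\widehat Q_c(\gamma(t))=Q((\gamma(t)^\top,\widehat\eta_\gamma(t)^\top)^\top;t)$, so $\widehat Q_c(\widehat\gamma(t))=0$. Its Jacobian in $\gamma$ at $\gamma^*(t)$ is, up to lower-order terms, $V_{\gamma,\gamma}(t)-V_{\gamma,\eta^*}(t)V_{\eta^*,\eta^*}(t,\gamma^*(t))^{-1}V_{\eta^*,\gamma}(t)$, whose expectation tends to $H_Q(\gamma^*(t))$; by Condition~\ref{condition:H3} its smallest eigenvalue is $\ge\varsigma>0$ uniformly in $t$, so $\widehat Q_c$ is locally invertible near $\gamma^*(t)$ and $\sup_t\|\widehat\gamma(t)-\gamma^*(t)\|_\infty\lesssim\sup_t\|\widehat Q_c(\gamma^*(t))\|_\infty$. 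The key observation is that, substituting the first-stage linearization $\widehat\eta_{\gamma^*}(t)-\eta^*(t)\approx V^*(t)^{-1}F(\theta^*(t))$,
\[
\widehat Q_c(\gamma^*(t))=Q(\theta^*(t))-\mathbb{E}\{V_{\gamma,\eta^*}(t)\}\,V^*(t)^{-1}F(\theta^*(t))+R_n(t),
\]
i.e.\ a single linear combination of the same kernel-weighted martingale integrals, whose combining coefficients have $\ell_\infty$-norm at most a fixed power of $e^{q_n}$ times $\kappa_n$; applying the Step-(i) maximal inequality to this combination produces the $(q_n+1)e^{7q_n}\kappa_n^2\sqrt{\log(nh_1)/(nh_1)}$ term of \eqref{eq2.7}, while the remainder $R_n(t)$ (quadratic-in-$(\widehat\eta_{\gamma^*}-\eta^*)$ terms, the $h_1\neq h_2$ bandwidth mismatch against the $F=0$ identities, and the $O(h_2^2)$ smoothing bias) is of order $O_p(1/(n^2h_2))$. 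This establishes \eqref{eq2.7}; and \eqref{eq2.6} for the genuine $\widehat\eta(t)=\widehat\eta_{\widehat\gamma}(t)$ then follows by combining \eqref{eq2.6} at $\gamma^*$ with the Lipschitz bound from the first stage and \eqref{eq2.7}.

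\emph{Main obstacle.} The hard part is the high-dimensional nuisance $\eta(t)\in\mathbb{R}^{2n-1}$: we need an $\ell_\infty$-operator-norm (not merely spectral) bound on $(-V_{\eta,\eta})^{-1}$ holding uniformly in $t$, which relies entirely on the special bordered-diagonal structure of $V^*(t)$ and on the accuracy of the $S^*(t)$ approximation; this has to be combined with uniform-in-$t$ martingale maximal inequalities under kernel weighting and with the cubic Taylor remainder of $F$. Tracking how the sparsity level $q_n$ --- which drives the information entries down to $e^{-q_n}$, making $V^*(t)$ nearly singular --- is amplified through the matrix inversion and the iteration is exactly what produces the powers $e^{13q_n}$ in \eqref{eq2.6}, $e^{7q_n}$ in \eqref{eq2.7}, and the $e^{19q_n}$ headroom demanded by the hypothesis.
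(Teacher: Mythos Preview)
Your two-stage plan is the paper's plan, and your ingredients (kernel-weighted martingale exponential inequalities plus discretization for the score, and the $S^*(t)$ approximate inverse of the bordered $\beta$-model Hessian) are exactly the ones used. Two differences are worth flagging.

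First, a genuine gap in the second stage. You write that since the profiled Jacobian has smallest eigenvalue $\ge\varsigma$, ``$\widehat Q_c$ is locally invertible near $\gamma^*(t)$ and $\sup_t\|\widehat\gamma(t)-\gamma^*(t)\|_\infty\lesssim\sup_t\|\widehat Q_c(\gamma^*(t))\|_\infty$.'' This inversion is only valid once you already know $\widehat\gamma(t)$ lies in the neighbourhood where the Jacobian is controlled; as stated you are assuming what you want to prove. The paper closes this by working uniformly in $\gamma$ over a compact set: Lemma~6 is proved for all $\gamma\in\mathcal{D}$ (with $\widehat\eta_\gamma$ close to the \emph{$\gamma$-dependent} centering $\eta_\gamma^*$, not to $\eta^*$), then $\widehat Q_c(\gamma)$ is shown to be uniformly close to the deterministic function $U(\gamma)=\lim N^{-1}\sum_{i,j}\mathbb{E}[Z_{ij}(e^{\pi_{ij}^*}-e^{\pi_{ij,\gamma}^*})]$, and a monotonicity argument (for any unit $w$, $r\mapsto w^\top U(\gamma^*+wr)$ is increasing, hence $\inf_{\|\gamma-\gamma^*\|>r_0}\|U(\gamma)\|>\bar\kappa r_0$) pins $\widehat\gamma(t)$ near $\gamma^*(t)$ \emph{before} any Taylor expansion. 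Only then does the linearization you wrote deliver the rate \eqref{eq2.7}. Your Lipschitz bound $\|\widehat\eta_\gamma-\widehat\eta_{\gamma^*}\|_\infty\lesssim e^{q_n}\kappa_n\|\gamma-\gamma^*\|$ is in the same spirit but cannot by itself substitute for this identifiability step.

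Second, a cosmetic difference in the first stage. You invoke Brouwer's theorem on the approximate-Newton map $\eta\mapsto\eta+\widehat S\,F$; the paper instead verifies the Newton--Kantorovich conditions (their Lemma~2) by bounding $\|[F_\gamma^{(1)}(\eta_\gamma^*)]^{-1}F_\gamma(\eta_\gamma^*)\|_\infty$ via the $S$-approximation and checking Lipschitzness of $F_\gamma^{(1)}$ (their Lemma~5). The two routes are essentially equivalent here; Kantorovich has the advantage of giving existence, uniqueness and the error bound in one stroke. Note also that the rate in Lemma~6 is $(q_n+1)e^{6q_n}\kappa_n$, not the $(q_n+1)e^{13q_n}\kappa_n^3$ of \eqref{eq2.6}: the extra $e^{7q_n}\kappa_n^2$ enters only when you redo the argument with the perturbed score $F(\eta^*,\gamma)$ for $\gamma$ at distance $\varepsilon_n$ from $\gamma^*$, which contributes an additional $e^{q_n}\kappa_n\varepsilon_n$ to $\|F\|_\infty$ and hence the larger exponent after inversion.
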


\begin{remark}\label{Rem1}
	The condition in Theorem \ref{theorem:consistency} implies that
	$q_n$ can be chosen as $c\log(nh_1)$ with $c\in(0,1/40)$.
	Therefore, the term on the right-hand side of \eqref{eq2.4} 
	is of order $n^{\tilde c}$ with $\tilde c\in(79/40,2)$,
	whose ratio to $n^2$ tends to zero.
	The condition on $q_n$ in Theorem \ref{theorem:consistency} appears stronger than what is needed to 
	guarantee the right-hand side of \eqref{eq2.6} and \eqref{eq2.7} to go to zero.
	The reason is that it establishes not only the uniform consistency, but also the existence of the solution to \eqref{eq2.5}.  
	The existence of the solution requires a more stringent condition on $q_n$ while this point is not explicitly reflected in \eqref{eq2.6} and \eqref{eq2.7}.
	This phenomenon also exists in other works (e.g., \citealp{Chatterjee:Diaconis:Sly:2011}; \citealp{yan2016asymptotics}). 
	% \iffalse
	% In the simulation studies below, 
	% we examine the performance of the proposed method 
	% when $q_n=c\log(n)$ with $c=1/2.$
	% The term on the right hand side of (\ref{eq2.4}) 
	% is of order $O(n^{3/2})$ and hence a moderate sparse network is generated.
	% The simulations suggest that in this case our method still works well.
	% But how to obtain a sharp bound in theory needs future research.
	% \fi
\end{remark}

\begin{remark}
	Indeed the uniform convergence rate in Theorem \ref{theorem:consistency} has the familiar bias-variance trade-off
	in the kernel smoothing literature (e.g., \citealp{FG1996}).
	Specifically, in the proof of Theorem \ref{theorem:consistency}, 
	the bias terms for $\widehat\eta(t)$
	and $\widehat\gamma(t)$ are of order 
	$O(e^{q_n}h_1^2)$ and $O(e^{q_n}h_2^2),$ respectively.
	This fact suggests that the bandwidths should be carefully selected  to balance the bias and variance. 
	When Condition \ref{condition:bandwidth5} holds, 
	the biases in Theorem \ref{theorem:consistency} are dominated uniformly by $O_p\big((q_n+1) e^{13q_n} \kappa_n^3 \sqrt{\log( nh_1)/(nh_1)}\big)$ and $O_p\big((q_n+1) e^{7q_n} \kappa_n^2 \sqrt{\log( nh_1)/(nh_1)}+1/(n^2h_2)\big).$
\end{remark}

Define $\mu_{0}=\int \mathcal{K}^2(u)du,$ and write $H_Q(t)=H_Q(\gamma^*(t)).$ Next, we present the asymptotic normality of $\widehat\gamma(t)$.

\begin{theorem}
	\label{theorem-central-gamma}
	Suppose that Conditions \ref{condition:CB1}-\ref{condition:bandwidth5} hold.
	If $(q_n+1)^{3/2}e^{15q_n}\kappa_n^2[\log(nh_1)]^{3/4}/(nh_1)^{1/4}\rightarrow 0$,
	then $(Nh_2)^{1/2}\Psi(t)^{-1/2}\{\widehat{\gamma}(t)-\gamma^*(t)- \{H_Q(t)\}^{-1}b_*(t)\}$ converges in distribution to a $p$-dimensional standard normal distribution,
	where $\Psi(t)=\{H_Q(t)\}^{-1}\Sigma(t) \{H_Q(t)\}^{-1}$.
	Here, $\Sigma(t)$ and $b_*(t)$ are
	\begin{align*}
		\Sigma(t)=&\frac{\mu_0}{N}\sum_{i=1}^n\sum_{j=1,j\neq i}^n\mathbb{E}\bigg[\Big(Z_{ij}(t)
		-V_{\gamma^*\eta^*}(t)S^*(t)\iota_{ij}\Big)^{\otimes 2}e^{\pi_{ij}^*(t)}\bigg],\\
		b_*(t)=&\frac{\mu_{0}}{2Nh_1}\bigg[ \sum_{i=1}^n \frac{\sum_{j\neq i}\mathbb{E}(Z_{ij}(t) \exp\{\pi_{ij}^*(t)\})}
		{\sum_{j\neq i} \mathbb{E}(\exp\{\pi_{ij}^*(t)\})}+\sum_{j=1}^n \frac{  \sum_{i\neq j}\mathbb{E}(Z_{ij}(t) \exp\{\pi_{ij}^*(t)\})}
		{\sum_{i\neq j} \mathbb{E}(\exp\{\pi_{ij}^*(t)\})}\bigg],
	\end{align*}
	where $a^{\otimes2}=aa^\top$ for any vector $a,$
	and $\iota_{ij}$ is a $(2n-1)$-dimensional vector with the $i$th and $(n+j)$th elements being one
	and others being zero.
\end{theorem}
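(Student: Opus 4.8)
The plan is to profile the degree parameters $\eta(t)$ out of \eqref{eq2.5}, linearise the profiled equation $\widehat Q_c(\widehat\gamma(t))=0$ about $\gamma^*(t)$, extract a martingale leading term to which Rebolledo's martingale central limit theorem applies, recognise the residual deterministic drift as $\{H_Q(t)\}^{-1}b_*(t)$, and show that everything else is $o_p((Nh_2)^{-1/2})$ uniformly over $t\in[a,b]$. First I would differentiate the identity $F(\widehat\eta_\gamma(t),\gamma(t))\equiv0$ to obtain $\partial_\gamma\widehat\eta_\gamma(t)=-V_{\eta,\eta}(t,\gamma(t))^{-1}V_{\eta,\gamma}(t)$, so that $\partial_\gamma\widehat Q_c(\gamma(t))=V_{\gamma,\gamma}(t)-V_{\gamma,\eta}(t)V_{\eta,\eta}(t,\gamma(t))^{-1}V_{\eta,\gamma}(t)$, which concentrates around $H_Q(\gamma(t))$. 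Combining a mean-value expansion of $0=\widehat Q_c(\widehat\gamma(t))$ with Theorem \ref{theorem:consistency} (which confines $\widehat\gamma(t)$ and $\widehat\eta(t)$ uniformly near their truths) and Condition \ref{condition:H3} (continuity and invertibility of $H_Q$ near $\gamma^*(t)$) then gives
\[
\widehat\gamma(t)-\gamma^*(t)=-\{H_Q(t)+o_p(1)\}^{-1}\{\widehat Q_c(\gamma^*(t))+R_n(t)\},
\]
where $R_n(t)$ collects the quadratic-in-error remainders of both Taylor expansions; showing $\sup_{t\in[a,b]}\|R_n(t)\|=o_p((Nh_2)^{-1/2})$ is where the strengthened condition $(q_n+1)^{3/2}e^{15q_n}\kappa_n^2[\log(nh_1)]^{3/4}/(nh_1)^{1/4}\to0$ is consumed, since the remainder is controlled by $\kappa_n^2$ times the square of the $\ell_\infty$-rates in \eqref{eq2.6}--\eqref{eq2.7}.

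Next I would decompose the leading term $\widehat Q_c(\gamma^*(t))=Q(\widehat\eta_{\gamma^*}(t),\gamma^*(t))$ by expanding about $\eta^*(t)$ and substituting the companion expansion $\widehat\eta_{\gamma^*}(t)-\eta^*(t)=-V_{\eta,\eta}(t,\gamma^*(t))^{-1}F(\eta^*(t),\gamma^*(t))+(\text{h.o.t.})$ obtained from $F=0$. The key device is the block-inversion lemma $V_{\eta,\eta}(t,\gamma^*(t))^{-1}\approx S^*(t)$ — the explicit generalised inverse of the bipartite degree-information matrix under the identifiability constraint $\beta_n(t)=0$, in the spirit of the $p_1$/$\beta$-model literature — whose $\iota_{ij}$-contraction produces the two-way-centred covariate $Z_{ij}(t)-V_{\gamma^*\eta^*}(t)S^*(t)\iota_{ij}$. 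This rewrites $\widehat Q_c(\gamma^*(t))$ as $M_n(t)+b_n(t)+(\text{negligible})$, where $M_n(t)$ is a sum over ordered pairs $(i,j)$ of stochastic integrals of the centred covariate process against the true counting-process martingales $d\mathcal M_{ij}(s)$, and $b_n(t)$ is deterministic. The drift $b_n(t)$ has two sources: the local-constant-in-$s$ replacement of $d\mathcal M_{ij}(s)$ by $d\mathcal M_{ij}(s,t)$, which by the symmetry assumed in Condition \ref{condition:kernel4} is of order $e^{q_n}h_2^2$ and is asymptotically negligible because $(Nh_2)^{1/2}e^{q_n}h_2^2\asymp ne^{q_n}h_2^{5/2}\to0$ by Condition \ref{condition:bandwidth5}; and the Jensen-type second-order bias of the log-ratio equations for $\alpha$ and $\beta$ in Step 1 of Algorithm \ref{algorithm-a}, which a second-order expansion of the logarithm together with the Poisson-type identity $\mathrm{Var}\{\int_0^\tau\mathcal K_{h_1}(s-t)\,dN_{ij}(s)\}=\mu_0h_1^{-1}\mathbb{E}\{e^{\pi_{ij}^*(t)}\}(1+o(1))$ identifies, after propagation through $V_{\gamma,\eta}$, as $b_*(t)$.

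Finally, since $M_n(t)$ is a stochastic integral with respect to orthogonal counting-process martingales, I would invoke Rebolledo's martingale central limit theorem. Its predictable variation equals $N^{-2}\sum_{i\neq j}\int_0^\tau\big(Z_{ij}(s)-V_{\gamma^*\eta^*}(t)S^*(t)\iota_{ij}\big)^{\otimes2}\mathcal K_{h_2}^2(s-t)\,e^{\pi_{ij}^*(s)}\,ds$, which after the substitution $u=(s-t)/h_2$ and continuity of the integrand converges, upon multiplication by $Nh_2$, to $\Sigma(t)$; the Lindeberg/jump condition holds because the jumps of $(Nh_2)^{1/2}M_n(t)$ are of order $(Nh_2)^{-1/2}\to0$. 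Hence $(Nh_2)^{1/2}M_n(t)$ converges in distribution to the $N(0,\Sigma(t))$ law, and combining this with the linearisation and $\Psi(t)=\{H_Q(t)\}^{-1}\Sigma(t)\{H_Q(t)\}^{-1}$ yields the stated conclusion.

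I expect the bulk of the work to be the uniform (over $t\in[a,b]$) control of the block-inverse approximation $V_{\eta,\eta}^{-1}\approx S^*(t)$ and of the profiling correction: the dimension $2n-1$ grows, the sparsity factor $e^{q_n}$ amplifies every entrywise bound, and the two bandwidths $h_1$ (for $\eta$) and $h_2$ (for $\gamma$) must be carried through all the cross terms. Reconciling these errors — and the quadratic remainder $R_n(t)$ of the linearisation — with the $(Nh_2)^{1/2}$ scaling is precisely what forces the strengthened rate condition, and upgrading the probabilistic bounds to hold uniformly in $t$ (for instance via a covering argument on $[a,b]$ together with the exponential inequalities underlying Theorem \ref{theorem:consistency}) is the delicate part.
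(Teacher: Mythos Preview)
Your overall strategy---profile out $\eta$, linearise $\widehat Q_c(\widehat\gamma(t))=0$ in $\gamma$, expand $\widehat Q_c(\gamma^*(t))=Q(\widehat\eta_{\gamma^*}(t),\gamma^*(t))$ about $\eta^*(t)$, replace $V_{\eta,\eta}^{-1}$ by $S^*(t)$, and apply the martingale CLT---is exactly the paper's route, and your remarks on the negligibility of the smoothing bias (via $ne^{q_n}h_2^{5/2}\to0$) and on the role of the strengthened rate condition are on target.

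The one substantive divergence is your identification of $b_*(t)$. The paper does \emph{not} obtain $b_*(t)$ from a ``Jensen-type bias of the log-ratio in Algorithm~\ref{algorithm-a}'' propagated through the linear term $V_{\gamma,\eta}$. Instead it carries the Taylor expansion of $Q(\widehat\eta_{\gamma^*}(t),\gamma^*(t))$ in $\eta$ to \emph{third} order, writing $\sqrt{h_2/N}\sum_{i,j}\xi_{ij}(\widehat\eta_{\gamma^*},\gamma^*)=B_{51}+B_{52}+B_{53}$; the first-order piece $B_{51}$ (which contains $V_{\gamma,\eta}(\widehat\eta_{\gamma^*}-\eta^*)$) is shown, via Lemma~7, to be the pure martingale $\sqrt{h_2/N}\sum_{i,j}\tilde\xi_{ij}$ plus $o_p(1)$, and the bias $b_*(t)$ is extracted entirely from the \emph{quadratic} piece $B_{52}$ by evaluating the bilinear form in $(\widehat\eta_{\gamma^*}-\eta^*)$ at its asymptotic variance $\mu_0 S^*(t)/(nh_1)$, exactly as in Graham's (2017) incidental-parameter calculation. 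Your mechanism and the paper's $B_{52}$ are different second-order terms in the expansion; in the exponential model they happen to share the same functional form (because $\partial_{\eta_k}e^{\pi_{ij}}=\partial_{\eta_k}^2e^{\pi_{ij}}$), which is why your heuristic lands on the right $b_*(t)$, but following your route literally you would still have to control the quadratic term $\tfrac12 Q_{\eta\eta}[(\widehat\eta_{\gamma^*}-\eta^*)^{\otimes2}]$---it is not part of your ``negligible'' remainder, it is $O_p((Nh_2)^{-1/2})$---and to reconcile the two contributions without double-counting. Aligning with the paper, you should treat the linear term as purely martingale (after substituting Lemma~7) and locate the entire incidental-parameter bias in the quadratic term; Algorithm~\ref{algorithm-a} plays no role in the analysis.
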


\begin{remark}
	The limiting distribution of $\widehat\gamma(t)$ involves
	%is involved with 
	a bias term $\{H_Q(t)\}^{-1}b_*(t).$
	This is referred to as the so-called incidental parameter problem in econometric literature \citep{NS1948, FW2016, Dzemski2017}. 
	This phenomenon also appears in the network literature 
	\citep{Graham2017, yan2019}.
	The 
	%produced 
	bias is due to 
	the appearance of the estimator $\widehat\eta(t)$
	in the profiled function  $Q(\theta),$
	and the dimension of $\widehat\eta(t)$ diverges as $n\rightarrow \infty.$
	If $q_n$ is an absolute constant,
	then $\{H_Q(t)\}^{-1}b_*(t)=O(1/(nh_1))$, which is asymptotically negligible as $nh_1\rightarrow \infty$.
\end{remark}

The asymptotic normality of $\widehat\eta(t)$ is presented in the following theorem.

\begin{theorem}\label{theorem-central-degree}
	Under Conditions \ref{condition:CB1}-\ref{condition:bandwidth5}, if
	$(q_n+1)\kappa_n^3 e^{23q_n}(\log nh_1)^{1/2}/(nh_1)^{1/4}\rightarrow 0$
	as $n\rightarrow \infty$,
	then for any fixed positive integer $k$,
	$(nh_1)^{1/2}\big\{(\mu_{0}S^*(t))^{-1/2}[\widehat{\eta}(t)- \eta^*(t)]\big\}_{1:k}$
	converges in distribution to a $k$-dimensional standard normal distribution.
\end{theorem}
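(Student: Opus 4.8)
The plan is to linearise the first block $F(\widehat\eta(t),\widehat\gamma(t))=0$ of the local estimating equation \eqref{eq2.5} about $(\eta^*(t),\gamma^*(t))$, isolate a leading term that is a sum of stochastic integrals against the orthogonal martingales $\mathcal{M}_{ij}$, and invoke Rebolledo's martingale central limit theorem coordinate-wise through the Cram\'er--Wold device. A mean-value expansion of $F$ in $\eta$ gives
\begin{align*}
\widehat\eta(t)-\eta^*(t)=\big[-V_{\eta,\eta}(t;\bar\theta(t))\big]^{-1}\Big\{F(\theta^*(t))+V_{\eta,\gamma}(t;\bar\theta(t))\,\big(\widehat\gamma(t)-\gamma^*(t)\big)\Big\}
\end{align*}
for an intermediate point $\bar\theta(t)$ between $\widehat\theta(t)$ and $\theta^*(t)$, where $F(\theta^*(t))=\frac{1}{n-1}\sum_{i}\sum_{j\neq i}\iota_{ij}\int_0^{\tau}\mathcal{K}_{h_1}(s-t)\,d\mathcal{M}_{ij}(s,t)$ with $\iota_{ij}$ as in Theorem~\ref{theorem-central-gamma}. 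I would claim the leading term to be $S^*(t)\,\widetilde F(t)$, where $\widetilde F(t)$ is obtained from $F(\theta^*(t))$ by replacing the pseudo-increment $d\mathcal{M}_{ij}(s,t)=dN_{ij}(s)-e^{\pi_{ij}^*(t)}ds$ by the genuine zero-mean martingale increment $d\mathcal{M}_{ij}(s)=dN_{ij}(s)-e^{\pi_{ij}^*(s)}ds$; everything else must be shown, after projection onto any fixed block of $k$ coordinates, to be $o_p((nh_1)^{-1/2})$.

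First I would pass from the random Hessian block to the explicit matrix $S^*(t)$. Using Condition~\ref{condition:PB2}, the continuity of $V_{\eta,\eta}$ in $\theta$, a concentration bound for $V_{\eta^*,\eta^*}(t,\gamma^*(t))$ around its mean $-V^*(t)$, and the uniform consistency rate of Theorem~\ref{theorem:consistency}, the matrix $-V_{\eta,\eta}(t;\bar\theta(t))$ is uniformly close to $V^*(t)$; since $V^*(t)$ is diagonally dominant with the degree-sequence structure of the $p_1$/$\beta$-model Fisher information, $(V^*(t))^{-1}$ is approximated by $S^*(t)$ with the sharp perturbation bound known for this class of matrices, and in particular $S^*(t)V^*(t)\approx I_{2n-1}$. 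I would then multiply the resulting operator-norm errors into $F(\theta^*(t))$, bound $S^*(t)V_{\eta,\gamma}(t;\bar\theta(t))(\widehat\gamma(t)-\gamma^*(t))$ using Theorem~\ref{theorem:consistency}'s control of $\|\widehat\gamma(t)-\gamma^*(t)\|_\infty$ together with $\|V_{\eta,\gamma}(t)\|\lesssim\kappa_n e^{q_n}$, and bound the quadratic self-remainder of $F$ in $\eta$ by a multiple of $\|\widehat\eta(t)-\eta^*(t)\|_\infty^2 e^{q_n}$; showing each of these is $o_p((nh_1)^{-1/2})$ uniformly in $t\in[a,b]$ is exactly what consumes the strengthened moment hypothesis $(q_n+1)\kappa_n^3 e^{23q_n}(\log nh_1)^{1/2}/(nh_1)^{1/4}\to0$, and it is also why no bias term appears in the statement, in contrast to Theorem~\ref{theorem-central-gamma}. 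It remains to remove the kernel-smoothing bias: by twice differentiability of $\alpha_i^*,\beta_j^*,\gamma^*$ the difference of compensators equals $[\dot\alpha_i^*(t)+\dot\beta_j^*(t)+Z_{ij}(s)^\top\dot\gamma^*(t)](s-t)+O((s-t)^2)$ times a factor bounded by $e^{q_n}$; once $h_1$ is small enough that $[t-h_1,t+h_1]\subset[0,\tau]$ the linear part integrates to zero against the symmetric kernel of Condition~\ref{condition:kernel4}, while the quadratic part contributes $O(e^{q_n}(q_n+\kappa_n)^2 h_1^2)$, so $(nh_1)^{1/2}S^*(t)$ times it is $o(1)$ by Condition~\ref{condition:bandwidth5}.

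For the leading term I would fix $k$ and $a\in\R^k$ and write
\begin{align*}
(nh_1)^{1/2}a^\top\big\{(\mu_0 S^*(t))^{-1/2}S^*(t)\widetilde F(t)\big\}_{1:k}=\frac{(nh_1)^{1/2}}{n-1}\sum_{i}\sum_{j\neq i}c_{ij}(t)\int_0^{\tau}\mathcal{K}_{h_1}(s-t)\,d\mathcal{M}_{ij}(s),
\end{align*}
a real-valued local martingale in $\tau$ whose predictable variation is $\frac{nh_1}{(n-1)^2}\sum_i\sum_{j\neq i}c_{ij}(t)^2\int_0^{\tau}\mathcal{K}_{h_1}(s-t)^2 e^{\pi_{ij}^*(s)}ds$. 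Using $\int\mathcal{K}_{h_1}(s-t)^2 ds\to h_1^{-1}\mu_0$, the approximation $e^{\pi_{ij}^*(s)}\to e^{\pi_{ij}^*(t)}$ on the shrinking support, the identity $\sum_i\sum_{j\neq i}c_{ij}(t)^2\,\mathbb{E}e^{\pi_{ij}^*(t)}=(n-1)\,d(t)^\top V^*(t)\,d(t)$ with $d(t)=\mu_0^{-1/2}(S^*(t))^{1/2}P_k^\top a$ (and $P_k$ the projection onto the first $k$ coordinates), and $S^*(t)V^*(t)\approx I_{2n-1}$, this predictable variation converges in probability to $\|a\|_2^2$; the jump/Lindeberg condition holds since each jump is $O(|c_{ij}(t)|(nh_1)^{-1/2})\to0$ while the total mass stays bounded. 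Rebolledo's theorem then gives $(nh_1)^{1/2}\{(\mu_0 S^*(t))^{-1/2}S^*(t)\widetilde F(t)\}_{1:k}\xrightarrow{d}N(0,I_k)$ by Cram\'er--Wold, and Slutsky combined with the previous paragraph transfers this to $(nh_1)^{1/2}\{(\mu_0 S^*(t))^{-1/2}[\widehat\eta(t)-\eta^*(t)]\}_{1:k}$.

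The hard part will be the second step: simultaneously inverting the growing-dimension Hessian block $V_{\eta,\eta}$ and approximating it by the explicit $S^*(t)$ with an error tight enough in $\ell_\infty$/operator norm to survive multiplication by the exponential factors $e^{cq_n}$, and verifying that the plug-in error carried by $\widehat\gamma(t)$ and the quadratic self-remainder of $F$ are of genuinely smaller order than $(nh_1)^{-1/2}$. It is the cumulative exponential bookkeeping there, rather than any single estimate, that makes the argument delicate and forces the hypothesis of Theorem~\ref{theorem-central-degree} to be stronger than that of Theorem~\ref{theorem:consistency}.
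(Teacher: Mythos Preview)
Your overall architecture matches the paper's proof: Taylor/mean-value expand $F(\widehat\eta,\widehat\gamma)=0$ about $(\eta^*,\gamma^*)$, show the quadratic remainder and the $\widehat\gamma$-plug-in term are $o_p((nh_1)^{-1/2})$, replace $V(t)^{-1}$ by $S^*(t)$ via the approximate-inverse lemma, strip the kernel-smoothing bias, and finish with a martingale CLT (the paper packages the last two steps as Lemma~7). The identification of the quadratic self-remainder as the bottleneck that forces the $e^{23q_n}$ hypothesis is also correct.

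There is, however, one genuine gap. You propose to bound $S^*(t)V_{\eta,\gamma}(t)(\widehat\gamma(t)-\gamma^*(t))$ using Theorem~\ref{theorem:consistency}'s rate for $\widehat\gamma$. That rate is
\[
\|\widehat\gamma(t)-\gamma^*(t)\|_\infty=O_p\!\left((q_n+1)e^{7q_n}\kappa_n^2\sqrt{\tfrac{\log nh_1}{nh_1}}\right),
\]
and after multiplying by $\|S^*(t)V_{\eta,\gamma}(t)\|\lesssim e^{2q_n}\kappa_n$ you obtain a term of order $(q_n+1)e^{9q_n}\kappa_n^3\sqrt{\log(nh_1)/(nh_1)}$. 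For this to be $o_p((nh_1)^{-1/2})$ you would need $(q_n+1)e^{9q_n}\kappa_n^3(\log nh_1)^{1/2}\to 0$, which is \emph{not} implied by the hypothesis of Theorem~\ref{theorem-central-degree} (indeed it fails even when $q_n,\kappa_n$ are bounded, since $(\log nh_1)^{1/2}\to\infty$). The consistency rate is simply too close to $(nh_1)^{-1/2}$ to be absorbed.

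The paper resolves this by invoking Theorem~\ref{theorem-central-gamma} instead: that theorem gives $\|\widehat\gamma(t)-\gamma^*(t)\|=O_p\big((Nh_2)^{-1/2}\big)=O_p\big((n\sqrt{h_2})^{-1}\big)$, and with $h_2=O(h_1^2)$ from Condition~\ref{condition:bandwidth5} this is $O_p((nh_1)^{-1})$, a full factor $(nh_1)^{-1/2}$ smaller than the target. Then $e^{6q_n}\kappa_n/(nh_1)=o_p((nh_1)^{-1/2})$ follows easily from the stated hypothesis. So your Step~2 needs the \emph{distributional} rate for $\widehat\gamma$, not merely its uniform consistency; this is the one place where the proof of Theorem~\ref{theorem-central-degree} genuinely leans on Theorem~\ref{theorem-central-gamma}.
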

\begin{remark}
	By Theorem \ref{theorem-central-degree}, 
	the covariance matrix of $(nh_1)^{1/2}[\widehat{\eta}(t)- \eta^*(t)]_{1:k}$ is  given by the upper left $k\times k$ block of $\mu_0S^*(t).$
	In addition, for any fixed $i,$ as $nh_1\rightarrow\infty$, the
	convergence rate of $\widehat\eta_i(t)$ is $O((nh_1)^{-1/2}e^{q_n/2}).$
\end{remark}
% \begin{remark}
	% \textcolor{red}{Can you say something here? e.g., what is the asymptotic variances of estimators?
		% What is the comparison results in contrast to \cite{yan2019} }
	% \end{remark}

% \section{Confidence interval and hypothesis testing}
% \label{section:inference}

\subsection{Confidence intervals for $\eta^*(t)$}
% In this section, 

We next construct the pointwise confidence interval for $\eta^*(t)$. 
Since the asymptotic covariance matrix for $\eta^*(t)$ involves
%is involved with 
unknown 
$S^*(t)$ for approximating $[V^*(t)]^{-1}$, we use  $\widehat{S}(t)=(\widehat{s}_{ij}(t))_{i,j\in [2n-1]}$
to estimate it, where the unknown parameters $\eta^*(t)$ and $\gamma^*(t)$ are replaced by their respective estimators
$\widehat{\eta}(t)$ and $\widehat{\gamma}(t)$, i.e.,
\[
\widehat{s}_{ij}(t)= 
\begin{cases}
	\frac{\delta_{ij}}{\widehat v_{i,i}(t)} + \frac{1}{\widehat v_{2n,2n}(t)},  & i, j\in [n],
	\text{or}~~ i,j\in [n-1]_n, \\
	-\frac{1}{\widehat v_{2n,2n}(t)}, & ~~~\text{otherwise}.
\end{cases}
\]
In the above equation,  $\widehat{v}_{2n,2n}=\sum_{i=1}^n \widehat{v}_{ii}(t)-\sum_{i=1}^n\sum_{j=1,j\neq i}^{2n-1} \widehat{v}_{ij}(t)$.

By Theorem \ref{theorem-central-degree}, the distribution of $(nh_1)^{1/2}\big\{\widehat{\eta}(t)-\eta^*(t)\big\}$ is asymptotically  equivalent to
\[
S^*(t)\left(\frac{h_1}{n}\right)^{1/2} \int_{0}^{\tau}\mathcal{K}_{h_1}(s-t)d\widetilde{\mathcal{M}}(s),
\]
where $\widetilde{\mathcal{M}}(t)=(\widetilde{M}_{1}(s),\dots,\widetilde{M}_{2n-1}(t))^\top$ with
\begin{align*}
	\widetilde M_{i}(t)=\sum_{k\ne i} \mathcal{M}_{ik}(t)~ (i\in[n])~~
	\text{and}~~\widetilde M_{n+i}(t)=\sum_{k\ne i} \mathcal{M}_{ki}(t)~(i\in[n-1]).
\end{align*}
Note that $\widetilde M_{i}(t) $ is the sum of local square-integerable martingales.
Therefore, by the martingale properties, we can estimate the covariance $\Omega(t)$ of $\sqrt{h_1/n}\int_{0}^{\tau}\mathcal{K}_{h_1}(s-t)d\widetilde{\mathcal{M}}(s)$
by $\widehat \Omega(t)=(\widehat\omega_{ij}(t): i,j\in[2n-1])$ with
\begin{align*}
	\widehat\omega_{ii}(t)=&\frac{h_1}{n}\sum_{j\neq i}\int_0^{\tau}\mathcal{K}_{h_1}^2(s-t)dN_{ij}(s),~i\in[n],\\
	\widehat\omega_{n+j,n+j}(t)=&\frac{h_1}{n}\sum_{i\neq j}\int_0^{\tau}\mathcal{K}_{h_1}^2(s-t)dN_{ij}(s),~j\in[n-1],\\
	\widehat\omega_{i,n+j}(t)=\widehat\omega_{n+j,i}(t)=&\frac{h_1}{n}\int_0^{\tau}\mathcal{K}_{h_1}^2(s-t)dN_{ij}(s), \qquad i\in[n],~j\in[n-1],\\
	\widehat\omega_{ij}(t)=\widehat\omega_{ji}(t)=&0,\quad\quad\text{otherwise}.
\end{align*}
Thus, we estimate the variance of $\sqrt{nh_1}\{\widehat\eta_i(t)-\eta_i^*(t)\}$
by the $i$th diagonal element $\widehat\sigma_{ii}(t)$ of $\widehat S(t)\widehat \Omega(t)\widehat S(t)$. 

By arguments similar to Lemma 7 in the supplementary material, we can show
\begin{align*}
	\|\widehat S(t)\widehat \Omega(t)\widehat S(t)-\mu_{0}S^*(t)\|_{\max}=o_p(1),
\end{align*}
where $\|M\|_{\max}=\max_{i,j} |m_{i,j}|$ 
denotes the maximum absolute entry-wise norm for any matrix $M=(a_{i,j}).$
% \begin{theorem}
	% 	\label{theorem-central-CB}
	% Under the conditions of Theorems \ref{theorem-central-gamma} and \ref{theorem-central-degree}, we have
	% \begin{align*}
		% \|\widehat S(t)\widehat \Omega(t)\widehat S(t)-\mu_{0}S^*(t)\|_{\max}=o_p(1).
		% \end{align*}
	% \end{theorem}
Let $z_{\alpha}$ be the $100(1-\alpha)$th percentile of the standard normal distribution.
Then the $(1-\alpha)$-confidence interval  for $\eta_i^*(t)$ is given by
\begin{align*}
	&\widehat \eta_i(t)\pm (nh_1)^{-1/2}z_{\alpha/2}\widehat\sigma_{ii}^{1/2}(t),~~i\in[2n-1].
\end{align*}

\iffalse
For this, we need provide an estimate for $V^*(t)$. 
Let $\widehat V(t)=-V_{\hat\eta,\hat\eta}(t,\widehat\gamma(t))$
and $\widehat v_{i,j}(t)$ denote its $(i,j)$th element,
where $\widehat{v}_{i,j}(t)$ is the estimate of $v_{i,j}(t)$ by replacing $\theta(t)$ in its expressions with 
$\widehat{\theta}(t)$.
\begin{align*}
	\widehat v_{i,i}(t)=&(n-1)^{-1}\sum_{j\neq i}e^{\widehat\pi_{ij}(t)},\qquad\quad~ i\in[n],\\
	\widehat v_{n+j,n+j}(t)=&(n-1)^{-1}\sum_{i\neq j}e^{\widehat\pi_{ij}(t)},\qquad\quad~ j\in[n-1],\\
	\widehat v_{i,n+j}(t)=&\widehat v_{n+j,i}(t)=(n-1)^{-1}e^{\widehat\pi_{ij}(t)},~i\in[n],~j\in[n-1],~i\neq j,\\
	\widehat v_{i,j}(t)=&\widehat v_{j,i}(t)=0,\hspace{1.2in}\text{otherwise}.
\end{align*}
\fi

\begin{remark}
	Since the dimension of $\eta^*(t)$ diverges with the sample size $n,$ directly using $\mu_0\widehat S(t)$ to estimate $\mu_0S^*(t)$ may result in a large bias for local smoothing estimators with finite sample.
	Therefore, instead of $\mu_0\widehat S(t),$
	we consider a sandwich-type estimator $\widehat S(t)\widehat \Omega(t)\widehat S(t)$ for
	the variance of $\sqrt{nh_1}\{\widehat\eta_i(t)-\eta_i^*(t)\}.$
	This is different from the covariance estimator developed by \cite{yan2019} for static networks, where the maximum likelihood estimator was used.
	%applied.
\end{remark}

\subsection{Confidence intervals for $\gamma^*(t)$}
Based on Theorem \ref{theorem-central-gamma}, $\widehat\gamma(t)$ has a non-negligible bias term $H_Q(t)^{-1}b_*(t).$
Therefore, bias-correction is necessary. For this, define
\begin{align*}
	\widehat b(t)=&\frac{h_1}{2N}\bigg[\sum_{i=1}^n\frac{\sum_{j\neq i}Z_{ij}(t)\int_0^\tau \mathcal{K}_{h_1}^2(s-t)dN_{ij}(s)}
	{\sum_{j\neq i} \exp\{\widehat\pi_{ij}(t)\}}+\sum_{j=1}^n\frac{\sum_{i\neq j}Z_{ij}(t)\int_0^\tau \mathcal{K}_{h_1}^2(s-t)dN_{ij}(s)}
	{\sum_{i\neq j} \exp\{\widehat\pi_{ij}(t)\}}\bigg],\\
	\widehat H_Q(t)=&\frac{1}{N}\sum_{i=1}^n\sum_{j\neq i}Z_{ij}(t)^{\otimes2}e^{\widehat\pi_{ij}(t)}
	-\widehat  V_{\hat\gamma,\hat\eta}(t)\widehat S(t)\widehat V_{\hat\eta,\hat\gamma}(t),
\end{align*}
where $\widehat V_{\hat \gamma,\hat\eta}(t)=N^{-1}(\widehat u_{1}(t),\dots,\widehat u_{2n-1}(t))$
and $\widehat  V_{\hat\eta,\hat\gamma}(t)=n\widehat V_{\hat \gamma,\hat\eta}(t)^\top$
with
$$
\widehat u_{i}(t)=\sum_{j\neq i}Z_{ij}(t)e^{\widehat\pi_{ij}(t)},~i\in[n]~~ \text{and}~~
\widehat u_{n+j}(t)=\sum_{i\neq j}Z_{ij}(t)e^{\widehat\pi_{ij}(t)},~j\in[n-1].
$$
In addition, by the martingale properties, $\Sigma(t)$ can be estimated  by
\begin{align*}
	\widehat\Sigma(t)=&\frac{h_2}{N}\sum_{i=1}^n\sum_{j\neq i}\Big(Z_{ij}(t)-\widehat V_{\hat\gamma,\hat\eta}(t)\widehat S(t)\iota_{ij}\Big)^{\otimes 2}\int_0^{\tau}\mathcal{K}_{h_2}^2(u-t)dN_{ij}(u).
\end{align*}
Finally, we estimate the bias $\{H_Q(t)\}^{-1}b_*(t)$ by $\{\widehat H_Q(t)\}^{-1}\widehat b(t)$
and the covariance $\Psi(t)$
by $\widehat\Psi(t)=\{\widehat H_Q(t)\}^{-1}\widehat\Sigma(t)\{\widehat H_Q(t)\}^{-1}$.

By arguments similar to Lemma 7 in the supplementary material, the absolute entry-wise error tends to zero with probability, that is,
\begin{align*}
	\|\{\widehat H_Q(t)\}^{-1}\widehat b(t)-\{H_Q(t)\}^{-1} b_*(t)\|_{\infty}=o_p(1)~~~ \text{and}~~~
	\|\widehat \Psi(t)-\Psi(t)\|_{\max}=o_p(1).
\end{align*}
% \begin{theorem}
	% \label{theorem5}
	% Under the conditions of Theorems \ref{theorem-central-gamma} and \ref{theorem-central-degree}, we have
	% \begin{align*}
		% \|\widehat H_Q(t)^{-1}\widehat b(t)-H_Q(t)^{-1} b_*(t)\|_{\infty}=o_p(1)~~~ \text{and}~~~
		% \|\widehat \Psi(t)-\Psi(t)\|_{\max}=o_p(1).
		% \end{align*}
	% \end{theorem}
% The proof of Theorem \ref{theorem5} is similar to the proof of Theorem \ref{theorem-central-gamma} and is omitted.
Let $\widetilde b_j(t)$ be the $j$th element of $\{\widehat H_Q(t)\}^{-1}\widehat b(t)$,
and $\widehat \psi_{jj}(t)$ be the $j$th diagonal element of $\widehat \Psi(t).$
Then the $(1-\alpha)$-confidence interval  for $\gamma_j^*(t)$ is given by
$$
\widehat \gamma_j(t)-\widetilde b_j(t)\pm (Nh_2)^{-1/2}z_{\alpha/2}\widehat \psi_{jj}^{1/2}(t),~~j\in[p].
$$

\section{Hypothesis testing}\label{section:test}
\subsection{Tests for Trend}\label{sec:Trt}
\cite{perry2013point} and \cite{Sit2020} assumed that $\gamma^*(t)$ is constant over time, while \cite{kreib2019}, 
\cite{Alexander2021} and our proposed model \eqref{eq2.1} assume that $\gamma^*(t)$ is time-varying.
Whether the effects of covariates on interactions change with time is usually unknown.
If both $\eta^*(t)$ and $\gamma^*(t)$ are time-invariant, the network may be static.
Therefore, it is of interest to test if $\eta^*(t)$ and $\gamma^*(t)$ have time-varying trends.
We call this the trend testing problem.
In this section, we consider testing the following hypotheses:
\begin{align*}
	& H_{0\eta}: \eta^*(t)=\eta^*~~ \text{for~all}~~t\in [a,b]\\
	\text{versus } & H_{1\eta}: \eta^*(t)\neq \eta^*~~\text{for some} \ t\in [a,b],
\end{align*}
and
\begin{align*}
	& H_{0\gamma}: \gamma^*(t)=\gamma^*~~ \text{for~all}~~ t\in [a,b]\\
	\text{ versus } & H_{1\gamma}: \gamma^*(t)\neq \gamma^*~~\text{for some}\ t\in [a,b],
\end{align*}
where $\eta^*$ and $\gamma^*$ are some unspecified vectors.
We see that
\begin{itemize}
	%    \item If both $H_{1\eta}$ and $H_{1\gamma}$ hold, then model \eqref{eq2.1} is a fully non-parametric model.
	\item if either of $H_{0\eta}$ and $H_{0\gamma}$ holds, then model \eqref{eq2.1} is a semi-parametric model;
	\item if both $H_{0\eta}$ and $H_{0\gamma}$ hold, then model \eqref{eq2.1} becomes a completely parametric model.
\end{itemize}
% We also see that if $H_{0\gamma}$ holds, then model \eqref{eq2.1} reduces to the model of \cite{perry2013point}.
\cite{Alexander2021} proposed a test statistic which compares the completely parametric
and the non-parametric estimator 
%by means of 
using the $\ell_2$-distance
to test $H_{0\eta}$ and $H_{0\gamma}.$
However, since the dimension of parameters grows with the nodes under model \eqref{eq2.1},
the method developed for fixed dimension 
%setting 
in \cite{Alexander2021} can not be directly applied here.
We consider the following test statistics to test $H_{0\eta}$ and $H_{0\gamma},$ respectively:
\begin{align*}
	\mathcal{T}_{\eta}&=\max_{i\in[2n-1]}\sup_{a\le t_1<t_2\le b}\sqrt{nh_1}\big|\widehat{\eta}_i(t_1)-\widehat{\eta}_i(t_2)\big|/\widehat\vartheta_{i,\eta}^{1/2}(t_1,t_2),\\
	\mathcal{T}_{\gamma}&=\max_{j\in[p]}\sup_{a\le t_1<t_2\le b}\sqrt{Nh_2}\big|\widehat{\gamma}_j(t_1)-\widetilde{b}_j(t_1)-\widehat{\gamma}_j(t_2)+\widetilde b_{j}(t_2)\big|/\widehat\vartheta_{j,\gamma}^{1/2}(t_1,t_2),
\end{align*}
where $\widehat\vartheta_{i,\eta}(t_1,t_2)=\widehat\sigma_{ii}(t_1)+\widehat\sigma_{ii}(t_2)$
and $\widehat\vartheta_{j,\gamma}(t_1,t_2)=\widehat \psi_{jj}(t_1)+\widehat \psi_{jj}(t_2).$

The test statistics $\mathcal{T}_{\eta}$ and $\mathcal{T}_\gamma$ are close to zero under the nulls
$H_{0\eta}$ and $H_{0\gamma},$ respectively.
Hence we will reject $H_{0\eta}$ if $\mathcal{T}_\eta>c_\eta(\nu),$
and reject $H_{0\gamma}$ if $\mathcal{T}_\gamma>c_\gamma(\nu),$
where $c_\eta(\nu)$ and $c_\gamma(\nu)$ are the critical values.
To obtain these critical values,
%$c_\eta(\nu)$ and $c_\gamma(\nu),$ 
we consider a resampling approach.
%Here we only focus on how to obtain $c_\eta(\nu).$
Using arguments similar to the proof of Theorem \ref{theorem-central-degree},
we can show that under the null $H_{0\eta},$
the distribution of $\sqrt{nh_1}[\widehat{\eta}(t_1)-\widehat{\eta}(t_2)]$ is asymptotically equivalent to
\begin{align*}
	\sqrt{\frac{h_1}{n}}\bigg[\widehat S(t_1)\int_0^{\tau} \mathcal{K}_{h_1}(u-t_1)d\widetilde{\mathcal{M}}(u)
	-\widehat S(t_2)\int_0^{\tau} \mathcal{K}_{h_1}(u-t_2)d\widetilde{\mathcal{M}}(u)\bigg],
\end{align*}
and  $\sqrt{nh_1}[\widehat{\eta}(t_1)-\eta^*] $ and $\sqrt{nh_1}[\widehat{\eta}(t_2)-\eta^*]$
are asymptotically independent with $t_1\neq t_2.$
In addition, using arguments similar to the proof of Theorem \ref{theorem-central-gamma},
we have that under the null $H_{0\gamma},$
the distribution of $\sqrt{Nh_2}[\widehat{\gamma}(t_1)-\widehat{b}(t_1)-\widehat{\gamma}(t_2)+\widehat b(t_2)]$ is asymptotically equivalent to
\begin{align*}
	&\sqrt{\frac{h_2}{N}}\sum_{i=1}^n\sum_{j\neq i} \bigg[\Big(Z_{ij}(t_1)-\widehat V_{\hat\gamma,\hat\eta}(t_1)\widehat S(t_1)\iota_{is}\Big)\int_0^{\tau}\mathcal{K}_{h_2}(u-t_1)d\mathcal{M}_{ij}(u)\\
	& \hspace{1.2in}-\Big(Z_{ij}(t_2)-\widehat V_{\hat\gamma,\hat\eta}(t_2)\widehat S(t_2)\iota_{ij}\Big)\int_0^{\tau}\mathcal{K}_{h_2}(u-t_2)d\mathcal{M}_{ij}(u)\bigg],
\end{align*}
and $\sqrt{Nh_2}[\widehat{\gamma}(t_1)-\widehat{b}(t_1)-\gamma^*]$ and $\sqrt{Nh_2}[\widehat{\gamma}(t_2)-\widehat b(t_2)-\gamma^*]$ are asymptotically independent with $t_1\neq t_2.$
A direct calculation yields that  the variance function of $\mathcal{M}_{ij}(u)$ is $E\{N_{ij}(u)\}$
(see Theorem 2.5.3 in \citealp{FH1991}).
Motivated by the work of \cite{LFW1994},
we replace $\mathcal{M}_{ij}(u)$ with $N_{ij}(u)G_{ij},$ that is,
\begin{align*}
	\widetilde{T}_{\eta}(t_1,t_2)=&\sqrt{\frac{h_1}{n}}\bigg[\widehat S(t_1)\int_0^{\tau} \mathcal{K}_{h_1}(u-t_1)d\widetilde{\mathcal{N}}(u)
	-\widehat S(t_2)\int_0^{\tau} \mathcal{K}_{h_1}(u-t_2)d\widetilde{\mathcal{N}}(u)\bigg],\\
	\widetilde{T}_{\gamma}(t_1,t_2)=&\sqrt{\frac{h_2}{N}}\sum_{i=1}^n\sum_{j\neq i} \bigg[\widehat H_Q^{-1}(t_1)\Big(Z_{ij}(t_1)-\widehat V_{\hat\gamma,\hat\eta}(t_1)\widehat S(t_1)\iota_{ij}\Big)\int_0^{\tau}\mathcal{K}_{h_2}(u-t_1)dN_{ij}(u)G_{ij}\\
	& \hspace{1in}-\widehat H_Q^{-1}(t_2)\Big(Z_{ij}(t_2)-\widehat V_{\hat\gamma,\hat\eta}(t_2)\widehat S(t_2)\iota_{ij}\Big)\int_0^{\tau}\mathcal{K}_{h_2}(u-t_2)dN_{ij}(u)G_{ij}\bigg],
\end{align*}
where $\widetilde{\mathcal{N}}(u)=(\widetilde N_1(u),\dots,\widetilde N_{2n-1}(u))^\top$ with
\begin{align*}
	\widetilde N_i(u)=\sum_{k\ne i} N_{ik}(u)G_{ik}~ (i\in[n])~~
	\text{and}~~\widetilde N_{n+i}(u)=\sum_{k\ne i} N_{ki}(u)G_{ki}~(i=[n-1]),
\end{align*}
and $G_{ij}\ (i\neq j\in[n])$ are independent standard normal variables which are independent of
the observed data and $G_{ii}=0.$
Define $\widehat\vartheta_{\eta}(t_1,t_2)=\text{diag}\{\widehat\vartheta_{1,\eta}(t_1,t_2),\dots,\widehat\vartheta_{2n-1,\eta}(t_1,t_2)\}$
and $\widehat\vartheta_{\gamma}(t_1,t_2)=\text{diag}\{\widehat\vartheta_{1,\gamma}(t_1,t_2),\dots,\widehat\vartheta_{p,\gamma}(t_1,t_2)\},$
where $\text{diag}\{a_1,\dots,a_n\}\in\mathbb{R}^{n\times n}$ denotes a diagonal matrix
with $a_i$ as its $i$th diagonal element.
%and its $i$th diagonal element is $a_i.$
By repeatedly generating the normal random sample $G_{ij},$
the distribution of $\mathcal{T}_{\eta}$ and $\mathcal{T}_{\gamma}$
can be respectively approximated by the conditional distributions of $\widetilde{\mathcal{T}}_{\eta}$ and $\widetilde{\mathcal{T}}_{\gamma}$ given the observed data, where
\begin{align*}
	\widetilde{\mathcal{T}}_{\eta}=&\sup_{a\le t_1<t_2\le b} \|\widehat\vartheta_{\eta}^{-1/2}(t_1,t_2)\widetilde{T}_{\eta}(t_1,t_2)\|_{\infty},\\
	\widetilde{\mathcal{T}}_{\gamma}=&\sup_{a\le t_1<t_2\le b} \|\widehat\vartheta_{\gamma}^{-1/2}(t_1,t_2)\widetilde{T}_{\gamma}(t_1,t_2)\|_{\infty}.
\end{align*}
Then, the critical values $c_\eta(\nu)$ and $c_\gamma(\nu)$ can be obtained by the upper $(1-\nu)$-percentile of the conditional distribution of $\widetilde{\mathcal{T}}_{\eta}$ and $\widetilde{\mathcal{T}}_{\gamma},$ respectively.

\subsection{Tests for degree heterogeneity}\label{sec:dht}
Degree heterogeneity is an important feature in real-world networks, 
but it is not always clear whether a network has degree heterogeneity, especially when the network is sparse.
In this section, we consider the test for degree heterogeneity.
%degree heterogeneity testing problem.
As mentioned in Section \ref{section:model},
it %becomes 
is equivalent 
to test the following hypotheses:
\begin{align*}
	& H_{01}: \alpha_{i}^*(t)=\alpha^*(t)~~ \text{for~all}~~ i\in[n]\\
	\text{ versus } & H_{11}: \text{There exists some}\ i\in[n]\ \text{such that}\
	\alpha_{i}^*(t)\neq \alpha^*(t),
\end{align*}
and
\begin{align*}
	& H_{02}: \beta_{i}^*(t)=\beta^*(t)~~ \text{for~all}~~  i\in[n-1]\\
	\text{ versus } & H_{12}: \text{There exists some}\ i\in[n-1]\ \text{such that}\ \beta_{i}^*(t)\neq \beta^*(t),
\end{align*}
where $\alpha^*(t)$ and $\beta^*(t)$ are some unspecified functions.
We see that
\begin{itemize}
	\item If $H_{01}$ holds but $H_{02}$ does not, then the network has only in-degree heterogeneity.
	\item If $H_{02}$ holds but $H_{01}$ does not, then the network has only out-degree heterogeneity \citep{perry2013point}.
	\item If both $H_{01}$ and $H_{02}$ hold, then the network has no degree heterogeneity \citep{kreib2019, Alexander2021}.
\end{itemize}
Let $e_{i,j}$ be a $(2n-1)$-dimensional vector, in which its $i$th element is $1,$
$j$th element is $-1$ and other elements are zeros.
We consider the following test statistics for $H_{01}$ and $H_{02},$ respectively:
\begin{align*}
	\mathcal{D}_\alpha=&\max_{i\neq j\in[n]}\sup_{t\in[a,b]}
	\sqrt{nh_1}|\widehat \alpha_{i}(t)-\widehat \alpha_{j}(t)|/\widehat\zeta_{ij,\alpha}^{1/2}(t),\\
	\mathcal{D}_\beta=&\max_{i\neq j\in[n-1]}\sup_{t\in[a,b]}
	\sqrt{nh_1}|\widehat \beta_{i}(t)-\widehat \beta_{j}(t)|/\widehat\zeta_{ij,\beta}^{1/2}(t),
\end{align*}
where 
\begin{align*}
	\widehat\zeta_{ij,\alpha}(t)=&e_{i,j}^\top\widehat S(t)\widehat\Omega(t)\widehat S(t) e_{i,j},\\
	\widehat\zeta_{ij,\beta}(t)
	=&e_{n+i,n+j}^\top\widehat S(t)\widehat\Omega(t)\widehat S(t)e_{n+i,n+j}.
\end{align*}

The test statistics $\mathcal{D}_{\alpha}$ and $\mathcal{D}_\beta$ are close to zero under the nulls
$H_{01}$ and $H_{02},$ respectively.
Hence we reject $H_{01}$ if $\mathcal{D}_\alpha>c_1(\nu),$
and reject $H_{02}$ if $\mathcal{D}_\beta>c_2(\nu),$
where $c_1(\nu)$ and $c_2(\nu)$ are the critical values.
%For this, 
We consider a resampling approach to obtain these critical values. 
%$c_1(\nu)$ and $c_2(\nu).$
Here we only focus on how to obtain $c_1(\nu)$, and
$c_2(\nu)$ can be obtained similarly.
Using arguments similar to the proof of Theorem \ref{theorem-central-degree},
we can show that under the null $H_{01},$
the distribution of $\sqrt{nh_1}\{\widehat \alpha_{i}(t)-\widehat \alpha_{j}(t)\}~(i\neq j\in[n])$ is asymptotically equivalent to
$\sqrt{h_1/n}e_{i,j}^\top \widehat S(t)\int_{0}^\tau \mathcal{K}_{h_1}(u-t) d\widetilde{\mathcal{M}}(u).$
% % \begin{align*}
	% %    &\sqrt{\frac{h_1}{n}}e_{i,j}^\top \widehat S(t)\int_{0}^\tau \mathcal{K}_{h_1}(u-t) d\widetilde{\mathcal{M}}(u)\\
	% %     =&\sqrt{\frac{h_1}{n}}\sum_{k\neq i} \widehat s_{ki}(t)\int_0^{\tau} \mathcal{K}_{h_1}(u-t)dM_{ik}(u)
	% % -\sqrt{\frac{h_1}{n}}\sum_{k\neq j} \widehat s_{kj}(t)\int_0^{\tau} \mathcal{K}_{h_1}(u-t) dM_{jk}(u).
	% % \end{align*}
% A direct calculation yields that  the variance function of $M_{is}(u)$ is $E\{N_{is}(u)\}$ 
% (see Theorem 2.5.3 in \cite{FH1991}).
% Motivated by the work of \cite{LFW1994},
As in Section \ref{sec:Trt}, we replace $\mathcal{M}_{is}(u)$ with $N_{is}(u)G_{is}$ in $\widetilde{\mathcal{M}}(u),$
that is,
% % \begin{align*}
	% % \widetilde{\mathcal{D}}_{ij,\alpha}(t)
	% % =\sqrt{\frac{h_1}{n}}\sum_{s\neq i} \widehat S(t)&\int_0^{\tau} \mathcal{K}_{h_1}(u-t)dN_{is}(u)G_{is}\\
	% % &-\sqrt{\frac{h_1}{n}}\sum_{k\neq j} \widehat S(t)\int_0^{\tau} \mathcal{K}_{h_1}(u-t) dN_{jk}(u)G_{jk},
	% % \end{align*}
\begin{align*}
	\widetilde{\mathcal{D}}_{ij,\alpha}(t)=\sqrt{\frac{h_1}{n}}e_{i,j}^\top\widehat S(t)\int_0^{\tau} \mathcal{K}_{h_1}(u-t)d\widetilde N(u),
\end{align*}
% where $\widetilde N(u)=(\widetilde N_1(u),\dots,\widetilde N_{2n-1}(u))^\top$ with
% \begin{align*}
	% \widetilde N_i(u)=\sum_{k\ne i} N_{ik}(u)G_{ik}~ (i\in[n])~~
	%  \text{and}~~\widetilde N_{n+i}(u)=\sum_{k\ne i} N_{ki}(u)G_{ki}~(i=[n-1]),
	% \end{align*}
% and $G_{ij}\ (i\neq j\in[n])$ are independent standard normal variables which are independent of
% the observed data and $G_{ii}=0.$
By repeatedly generating the normal random sample $G_{ij},$
the distribution of $\mathcal{D}_{\alpha}$
can be approximated by the conditional distribution of $\widetilde{\mathcal{D}}_{\alpha}$ given the observed data, where
\begin{align*}
	\widetilde{\mathcal{D}}_{\alpha}=&\max_{i\neq j\in[n]}\sup_{t\in[a,b]} |\widetilde{\mathcal{D}}_{ij,\alpha}(t)|/\widehat\zeta_{ij,\alpha}^{1/2}(t).
\end{align*}
Then, the critical value $c_1(\nu)$ can be obtained by the upper $(1-\nu)$-percentile of the conditional distribution of $\widetilde{\mathcal{D}}_{\alpha}.$ 

\section{Numerical studies} 
\label{section:simulation}
\subsection{Simulation studies} 
\label{section:5.1}
In this section, we carry out simulation studies to evaluate the finite sample performance of the proposed method. 
The time-varying degree parameters $\alpha_i^*(t)$ and $\beta_j^*(t)$ are
\begin{equation*}
	\alpha_i^*(t)= \begin{cases}
		-c_0\log(n)+(2.5+\sin(2\pi t)), & \text{if} \quad i<\frac{n}{2}, \\
		-c_0\log(n)+(1.5+t/2), & \text{if} \quad i\geq\frac{n}{2},
	\end{cases}
\end{equation*}
and
\begin{equation*}
	~~~~~~\beta_j^*(t)= \begin{cases}
		-c_0\log(n)+(2.5+\cos(2\pi t)), & \text{if} \quad j<\frac{n}{2}, \\
		-c_0\log(n)+(1.5+t/2), & \text{if} \quad \frac{n}{2}\leq j < n, \\
		0 &  \text{if} \quad j = n,
	\end{cases}
\end{equation*}
where $c_0$ is used to specify sparse regimes.
We take $c_0=0.5$ and hence $q_n\approx\log(n).$
The network sparsity level defined by
$\tau n^2e^{-q_n}$ is $O(n),$
which is less than $n^2$ and a moderately sparse network is generated.
%and then $O(n^2e^{-q_n})$ becomes $O(n^{3/2})=o(n^2).$
For the homophily term, we set $\gamma^*(t)=(\gamma_1^*(t),\gamma_2^*(t))^\top$ with $\gamma_1^*(t)=\gamma_2^*(t)=\sin(2\pi t)/3$. 
The covariates $Z_{ij}$ are independently generated from the standard normal distribution. 
We set $\tau=1$ and the numbers of nodes as $n=100$, $200$ and $500.$ 
The Gaussian kernel $\mathcal{K}(x)=\exp(-x^2/2)/(2\pi)^{1/2}$ is used 
and the bandwidths are chosen by the rule of thumb: $h_1 = 0.1n^{-{1}/{5}}$ and $h_2 = 0.015n^{-{2}/{5}}$. 
All of the results are based on $1000$ replications.
To measure the error of the estimators, we use the mean integrated squared error (MISE), which is defined by
$$
\text{MISE}=\frac{1}{1000}\sum_{k=1}^{1000}\int_{0}^{\tau}[\hat{f}_{k}(t)-f_k^*(t)]^2dt.
$$
Here, $f_k^*(t)$ denotes the true value 
and $\hat f_{k}(t)$ is its estimate in the $k$th replication.

The MISE for the estimators of $\alpha_1^*(t), ~\alpha_{n/2+1}^*(t),~\beta_1^*(t),~\beta_{n/2+1}^*(t)$
and $\gamma_1^*(t)$ are reported in Table \ref{MISEnew}.
The results for other parameters are similar and are omitted. 
From Table \ref{MISEnew}, we can see that all MISEs are small and less than $0.2.$
%As expected, 
The MISE decreases as the sample size $n$ increases, as we expected. 
The MISE for $\gamma_1^*(t)$ is much smaller (up to two orders of magnitude) than that for degree parameters,
%, up to two orders of magnitude. 
%This is 
which is due to the fact that the dimension of regression coefficients is fixed while the number of degree parameters is of order $n$.

%We plot 
The averages of the 1000 estimated coefficient curves for $\alpha_1^*(t),~\beta_1^*(t)$ and $\gamma_1^*(t),$ 
and their pointwise $95\%$ confidence bands are given in Figure \ref{fig:sendnew}.  
We can see that as the number of nodes increases, 
the estimated curves become closer to their true curves, 
and the confidence bands tend to cover the entire true curves. 
Table \ref{estimates} reports the coverage probabilities of the pointwise $95\%$ confidence intervals
and the average lengths of the confidence intervals for  $\alpha_1^*(t),~\beta_1^*(t)$ and $\gamma_1^*(t).$
We see that the coverage 
%frequencies 
probabilities are close to the nominal level 
and the lengths of the confidence intervals decrease as $n$ increases.
Figure S1 in supplementary material further displays the asymptotic distributions
of standardized $\widehat\alpha_1(t)$, $\widehat\beta_1(t)$ and $\widehat\gamma_1(t)~(t=0.6~\text{and}~0.8)$ with $n=500,$
which can be well approximated by the standard
normal distribution.
This confirms the theoretical results in Theorems  \ref{theorem-central-gamma} and \ref{theorem-central-degree}.

\emph{Comparison with \cite{kreib2019}}.
%Since \cite{perry2013point} assumes that the homophily parameters are time-constant, in this section we only compared our method with \cite{kreib2019}.
We now compare our method with \cite{kreib2019} on the performance of estimating homophily parameters.
Since \cite{perry2013point} assumed that the effects of covariates are constant over time,
their method is not compared here.
In this simulation, the homophily parameter is set as $\gamma^*(t)=\sin(2\pi t)/3$ and the covariates $Z_{ij}$ 
%take value 
are set to be 1 if $i\leq 4$ and $j\leq n/3$, and $0$ otherwise.
We set $\alpha_i^*(t)$ and $\beta_i^*(t)$ as
\begin{align*}
	% \text{Case I:}~~&\alpha_i^{*}(t)=\beta_i^{*}(t)=0~(1\le i\le n),\\
	\alpha_i^{*}(t)=\beta_i^{*}(t)=\begin{cases}
		b\big[-0.5\log(n)+(3+t/2)\big], & \text{if} \quad i<\frac{n}{2}, \\
		0, & \text{otherwise}.
	\end{cases}
	% \text{Case III:}~~&\alpha_i^{*}(t)=\beta_i^{*}(t)=\begin{cases}
		% (-0.5\log(n)+(3+t/2))/2 & \text{if} \quad i<\frac{n}{2}, \\
		% 0 & \text{if} \quad i\geq\frac{n}{2}.
		% \end{cases}\\
	% \text{Case IV:}~~&\alpha_i^{*}(t)=\beta_i^{*}(t)=\begin{cases}
		% -0.5\log(n)+(3+t/2) & \text{if} \quad i<\frac{n}{2}, \\
		% 0 & \text{if} \quad i\geq\frac{n}{2}.
		% \end{cases}
\end{align*}
When $b=0,$ the simulated network does not have degree heterogeneity, 
and hence both methods yield consistent estimators. 
As $b$ increases, the method of \cite{kreib2019} may give 
biased estimates for homophily parameters 
%may be obtained by the method of \cite{kreib2019},
due to the presence of degree heterogeneity.
%Case II-IV is about the effect of degree heterogeneity.
%, the method of \cite{kreib2019} may not perform well,
%due to the degree heterogeneity of networks.
We choose $b$ to be $0$, $1/3$, $1/2$ and $1$, and set $n=200$. 
%All results are based on $1000$ replications.

The results based on $1000$ replications are shown in Figure \ref{fig:het}.
We see that when $b=0$, the performance of the two methods are comparable in terms of bias.
However, our method leads to a wider confidence band, which is not surprising
%Not surprisingly, this is 
because
there are $2n+p-1$ unknown parameters 
%estimated under 
in our model,
while there are only $p$ unknown parameters 
%estimated 
in \citeauthor{kreib2019}'s model.
On the other hand, our model still performs well in estimating $\gamma^*(t)$ with $b=1/3$, $1/2$  and $1$, 
but the method of \cite{kreib2019} yields a biased estimate for $\gamma^*(t)$.
The bias increases as $b$ increases from $1/3$ to $1$,
and when $b=1$, the $95\%$ pointwise confidence band even fails to cover the entire true curve  of $\gamma^*(t)$.
This indicates that when degree heterogeneity exists in a network, 
neglecting this feature may result in a biased estimate for homophily effects.

\emph{Tests for trends}.
To examine the performance of the tests for trends,
we set $\alpha_{i}^*(t)=\beta^*_j(t)=-0.5\log(n)+(2.5+\tilde c_1\sin(2\pi t))$ for all $i\in[n]$ and $j\in[n-1]$, and $\gamma^*(t)=\tilde c_2\sin(2\pi t)/3$. The covariates $Z_{ij}$ are independently generated from the standard normal distribution. 
The parameters $\tilde c_1$ and $ \tilde c_2$ indicate the trending level.
We can see that $H_{0\eta}$ holds if $\tilde c_1=0$, and the departure from $H_{0\eta}$ increases as $\tilde c_1$ increases.
%, and $H_{0\eta}$ holds if $\tilde c_1=0.$
Similarly, $H_{0\gamma}$ holds if $\tilde c_2=0,$ and
the departure from $H_{0\gamma}$ increases as $\tilde c_2$ increases.
%and $H_{0\gamma}$ holds if $\tilde c_2=0.$
For simplicity, we choose $t_1$ and $t_2$ as $0.1, 0.2, \ldots, 0.9$ to calculate $\mathcal{T}_{\eta}$ and $\mathcal{T}_{\gamma}.$
The kernel function and bandwidth are chosen to be the same as before.
The sample size $n=100$ and $200,$ and the level $\nu$ is chosen as $0.05$.
The critical values are calculated using the resampling method with 1000 simulated realizations.

Figure \ref{fig:con} depicts the size and power of the statistics $\mathcal{T}_\eta$ and $\mathcal{T}_{\gamma}.$
Note that the estimated sizes of $\mathcal{T}_\eta$ and $\mathcal{T}_{\gamma}$ are around 0.05, and the empirical powers of both test statistics increase 
as $\tilde c_1$ and $\tilde c_2$ increase. 
The powers also increase with the sample size.
%when the sample size increases from $100$ to $200$.
The results show that the statistics ${\mathcal{T}}_{\eta}$ and $\mathcal{T}_{\gamma}$ perform well under the null hypothesis, 
and can also successfully detect the time-varying trends of parameters under the alternative hypothesis.

\emph{Tests for degree heterogeneity}.
We now examine the performance of the test statistics proposed in Section \ref{sec:dht} to test degree heterogeneity.
We set $\gamma^*(t)=\sin(2\pi t)/3$, and the covariates $Z_{ij}$ are independently generated from the standard normal distribution. 
Let $\alpha_{i}^*(t)=\beta^*_i(t)=\alpha(t)~\text{for~all}~ i\in[n-1],$ $\alpha_n^*(t)=\alpha(t) + \tilde c$
and $\beta_n^*(t)=0$, where $\tilde c$ indicates the level of degree heterogeneity. 
When $\tilde c=0,$ the null hypothesis $H_{01}$ holds, and
the departure from  the null $H_{01}$ increases as $\tilde c$ increases.
Here we set $\alpha(t)=t/2$.
We choose $t$ as $0.1,0.2,\dots,0.9$ to calculate $\mathcal{D}_{\alpha}.$
The kernel function and bandwidth are chosen to be the same as in Section \ref{section:5.1}.
The sample size $n=100$ and $200,$ and the level $\nu$ is chosen as $0.05$.
The critical values are calculated using the resampling method with 1000 simulated realizations.

The estimated size and power of ${\mathcal{D}}_{\alpha}$ are presented in Figure \ref{fig:power}.
We see that the estimated size is around 0.05 when $\tilde c=0,$ 
and the power of the proposed test increases as $\tilde c$ tends to 1.5.
In addition, the powers also increase when the sample size increases from $100$ to $200$.
%Therefore, 
The results show that the statistic ${\mathcal{D}}_{\alpha}$ 
performs well under the null hypothesis, and can also successfully detect the existence of degree heterogeneity under the alternative hypothesis.
The performance of ${\mathcal{D}}_{\beta}$ is similar to that of ${\mathcal{D}}_{\alpha}$ and it is omitted here.

\subsection{Real data analysis} 
\label{section:application}

In this section, we apply the proposed method to analyze a collaborative network data,
%. The data 
which can be retrieved from the Web of Science database (\url{https://www.webofscience.com/wos/woscc/basic-search}). 
From the field of machine learning, we retrieved the information for a total of 30,000 most cited papers from Jan. 2000 to Apr. 2022. 
The original dataset includes  key information including author names, article titles, source titiles, keywords, abstracts, addresses, 
email addresses and other publication information.
Since machine learning is becoming popular in recent years, 
it is of interest to know the developing trend of this field in different countries 
and how the collaboration network evolved between different regions.
Therefore, we extracted the collaboration network between countries from the original dataset.
The nodes of the network represent different countries or regions.
For each article, if the first author and other authors come from different countries, 
then these countries constitute the collaboration relationship, and a directed edge 
from the country of the first author to the country of each collaborator is added to the network. 
Multiple edges between two countries are allowed, but self-loops (links within the same country) are omitted.
There are 74 countries (labelled from 1 to 74 as nodes) with 19,679 collaborating records (edges) in the collaboration network.

Figure \ref{fig:struc} depicts the snapshots of the networks during four periods:
Jan. 2000 - Apr. 2002, May 2002 - Aug. 2004, Sep. 2004 - Jan. 2007, Feb. 2007 - May. 2009.
The size of the node corresponds to its in-degree and the countries are color coded by continent.
We can see that during Jan. 2000 - Apr. 2002, the collaborations concentrate on a few countries from Europe and America. 
However, more and more links involve Asian countries during Feb. 2007 - May 2009.
In Figure S2 of the supplementary material, we further plot the in-degrees and out-degrees of five selected countries from Jan. 2000 to Apr. 2022,
and the figure shows that both degrees grow significantly over time.
In addition, Figure S3 in the supplementary material depicts the total in-degrees and out-degrees of 74 countries during Jan. 2000 to Apr. 2022, 
and it shows that the degrees vary a lot from country to country.
For example, the highest in-degree is 3743 while the lowest is only 11.
These results imply that the collaboration network may have degree heterogeneity 
and its structure be time evolving.

From Figure \ref{fig:struc}, we also observe that there may exist some hub nodes (i.e., nodes with high degrees) in the collaboration network, 
e.g., USA which is the largest orange node.
The hub nodes tend to form many interactions with other countries, irrespective of the continent the country is from.
If we consider countries from the same continent (i.e., nodes with the same color) as homophilous, 
and countries from different continents (i.e., nodes with different colors) as heterophilous, 
the existence of hub nodes leads to even more interactions between heterophilous nodes than those between homophilous nodes.
For example, during Jan. 2000 - Apr. 2002, 77 out of 111 edges are heterophilic. 
A classical dynamic model of link formation may conclude that the preferences are not homophilic in the collaboration network, 
but our analysis later shows that there are still homophily effects in this data.

Our interests are to estimate the time-varying trend of in- and out-degrees 
and to examine whether homophilic effects exist in the collaboration network.
For this, we consider covariates: $M_{i}$, $A_{i}$ and $E_i$, $i\in\{1,\dots,74\}$, 
which are indicators of whether country $i$ is from America, Asia and Europe, respectively. Let $X_i=(M_{i},A_i,E_i)^T$. 
We further define $Z_{ij}=X_i\otimes X_j$, i.e. 
$$
Z_{ij}=(M_{i}M_{j},M_{i}A_{j},M_{i}E_j,A_{i}M_{j},A_{i}A_{j}, A_{i}E_j,E_iM_{j},E_iA_{j},E_iE_j)^\top.
$$
Here $M_{i}A_{j}$ denotes whether the first author is from America and the collaborator from Asia.
Other terms are defined similarly. 
As in simulation studies, the Gaussian kernel $\mathcal{K}(x)=\exp(-x^2/2)/(2\pi)^{1/2}$ is used 
and the bandwidths are chosen by the rule of thumb as $h_1 = 0.1n^{-{1}/{5}}$ and $h_2 = 0.015n^{-{2}/{5}}$. 

We carried out the testing procedures described in Section \ref{sec:Trt} to examine 
whether the parameters $\alpha_i^*(t)$ and $\beta_j^*(t)$ are time-varying,
and the p-value is less than 0.001,
which indicates that the trends of in- and out-degrees are time-varying.
Figure \ref{fig:realcountryID} shows the estimates of $\alpha_i^*(t)$ and $\beta_j^*(t)$ for some countries.
To avoid selecting the baseline for comparison,
Figure \ref{fig:realcountryID} gives 
the curves $\widehat{\alpha}_i^{'}(t)=\widehat{\alpha}_i(t) - {\overline{\alpha}}(t)$ and 
$\widehat{\beta}_i^{'}(t)=\widehat{\beta}_i(t) - {\overline{\beta}}(t),$ 
where ${\overline{\alpha}}(t)=n^{-1}\sum_{i=1}^n\widehat\alpha_i(t)$ and ${\overline{\beta}}(t)=n^{-1}\sum_{i=1}^n\widehat\beta_i(t).$ 
We observe that different countries have different trends of collaboration activities.
For example, we see a decreasing trend (comparing to the average) in USA for collaborative work, 
both as the first author or other authors. This seems to indicate that USA's role 
in leading global collaboration in machine learning research became less dominant over time, 
although their output level is still above the average in recent years.
The plots for Chad show different collaboration patterns and their role in the collaboration. 
Most authors from Chad tend to be the first author in their collaborative research with $\hat{\alpha}(t)$ above average 
and $\hat{\beta}(t)$ below average in the beginning period, 
but a few years later, they start to take the role of the collaborator more frequently.
The estimated curves for Nigeria are very different from those of USA and Chad. 
The curves increase rapidly after the year 2007, but the effects are negative on the collaboration activities over the entire time interval.
We also implement the testing procedure described in Section \ref{sec:dht} to test degree heterogeneity.
The p-values for testing both $\alpha_i^*(t)$ and $\beta_i^*(t)$
are less than 0.001, which indicates that the collaboration network has degree heterogeneity.

For homophily effects, we first carry out the testing procedure developed in Section \ref{sec:Trt} to test the existence of time-varying trend.
The p-value is less than 0.001,
which indicates that the homophily effects have significant time evolving trends.
We then present the estimated curves of the homophily parameters in Figure \ref{fig:realcountryHT}. 
It can be seen that if both countries are from America or Europe, 
they tend to collaborate more frequently. 
However, two Asian countries are more likely to collaborate in the beginning,
but have a lower tendency to collaborate in recent years.
Our results differ from those obtained by the method of \cite{kreib2019}.
For example, their method shows that the homophily effects on the collaboration between Asian countries are very close to zero,
but our method reveals that the homophily parameter is significantly positive at the very beginning. 
This can possibly be attributed to the consideration of degree heterogeneity in our method.

\section{Discussions}
In this article, we proposed a new degree-corrected Cox network model for the analysis of network recurrent data. We developed a kernel smoothing method to estimate the homophily and individual-specific parameters, and established consistency and asymptotic normality of the proposed estimators. We also proposed testing procedures to test for trend and degree heterogeneity in dynamic networks. 
Although we focus on directed networks, the methods developed here can be 
easily adapted into undirected networks. 
Numerical studies demonstrated that the proposed method performed well in practice.

There are several directions for future research. 
First, the degree heterogeneity we addressed in this paper may be incorporated into other models as well. For example, to model the dependence structure in network settings, the concept of asymptotic uncorrelation was proposed by \citet{kreib2019}, and further extended to momentary-$m$-dependence and $\beta$-mixing in \cite{Alexander2021}. See more related work in \cite{Sit2020} and \cite{Alexander2021}. However, the degree heterogeneity, which has not been considered in these papers, 
makes the mathematics behind these types of analysis significantly more challenging with $2n$ parameters. 
It would be of interest to explore this in the future. 

Second, we only used the local constant fitting to construct the estimating equation for simplicity. Indeed, it can be extended to 
local linear fitting and general local polynomial fitting. However, the resulting inference procedures would be much more complicated and need future research.
Finally, we chose the bandwidth by the rule of thumb.
Developing data-driven methods, such as the $K$-fold cross-validation, to select the optimal bandwidth is certainly of interest.
However, it poses challenges with $2n$ individual-specific parameters under our model, which requires more effort to study in the future.

\bibliographystyle{apalike-alt}
\bibliography{reference2}
\addtolength{\itemsep}{-2 em}

\clearpage
\newpage

\begin{table}[h] {\rm \  \hspace*{-0.3cm}{}
		\caption{The MISEs for parameters $\alpha_1^*(t), \alpha_{n/2+1}^*(t), \beta_1^*(t), \beta_{n/2+1}^*(t)$ and	$\gamma_1^*(t)$.}
		\begin{center}
			\renewcommand{\arraystretch}{1.15} \tabcolsep 0.16in  \doublerulesep 1.0pt
			\begin{tabular}{ccccccc} \hline   
				\multirow{2}{*}{\centering$n$}	 &&   \multicolumn{5}{c}{MISE}       \\  \cline{3-7} 		
				&&  $\alpha_1^*(t)$ & $\alpha_{n/2+1}^*(t)$  & $\beta_1^*(t)$   &
				$\beta_{n/2+1}^*(t)$ &	$\gamma_1^*(t)$     \\ 
				\hline
				%				$20$	 && 0.381   &  0.325   & 0.623  & 0.251  & 0.056 \\ 
				%				$50$	 &&	0.186   &  0.231   & 0.230  & 0.187  & 0.016 \\ 
				$100$	 &&	0.129   & 0.190   & 0.130   & 0.169 & 0.008\\ 
				$200$	 &&	0.111   & 0.173   & 0.107   & 0.153 & 0.004\\ 	 
				$500$	 &&	0.104   & 0.169   & 0.096   & 0.149 & 0.002\\ 			 
				\hline
			\end{tabular}
		\end{center}
		\label{MISEnew} 
}\end{table}

\begin{table}[h] {\rm \  \hspace*{-0.3cm}{}
		\caption{The coverage probability for parameters $\times100$ (the length of 95\% confidence interval) for $\alpha_1^*(t), \alpha_{n/2+1}^*(t), \beta_1^*(t), \beta_{n/2+1}^*(t)$ and $\gamma_1^*(t)$ at different time points.}
		\begin{center}
			\renewcommand{\arraystretch}{1.15} \tabcolsep 0.26in  \doublerulesep 1.0pt
			\begin{tabular}{clrrr} \hline   
				$n$&&  \multicolumn{1}{c}{$t=0.4$} & \multicolumn{1}{c}{$t=0.6$}  & \multicolumn{1}{c}{$t=0.8$}      \\ 
				\hline
				$100$	  &$\alpha_1^*(t)$          &	 95.5 (1.00)   & 92.3 (1.61)   & 95.9 (1.45)\\
				&$\alpha_{n/2+1}^*(t)$    &	93.7 (1.64)   & 93.8 (1.69)   & 95.4 (1.39)\\ 
				&$\beta_1^*(t)$           &	92.8 (1.12)   & 95.8 (1.60)   & 92.7 (1.30)\\ 
				&$\beta_{n/2+1}^*(t)$     &	92.6 (1.18)   & 94.7 (1.65)   & 93.9 (1.66)\\ 
				&$\gamma_1^*(t)$          &	93.8 (0.31)   & 95.8 (0.44)   & 94.7 (0.37)\\ 	 
				$200$	  &$\alpha_1^*(t)$          &	 96.5 (0.95)   & 97.8 (1.54)   & 91.1 (1.36)\\
				&$\alpha_{n/2+1}^*(t)$    &	95.8 (1.65)   & 95.3 (1.63)   & 95.9 (1.29)\\ 
				&$\beta_1^*(t)$           &	90.7 (1.08)   & 92.3 (1.57)   & 94.4 (1.20)\\ 
				&$\beta_{n/2+1}^*(t)$     &	96.1 (1.15)   & 96.3 (1.61)   & 95.5 (1.60)\\ 
				&$\gamma_1^*(t)$          &	95.3 (0.23)   & 95.3 (0.33)   & 95.3 (0.27)\\ 
				$500$	  &$\alpha_1^*(t)$          &	 95.5 (0.92)   & 97.1 (1.53)   & 94.8 (1.33)\\
				&$\alpha_{n/2+1}^*(t)$    &	92.9 (1.73)   & 94.5 (1.65)   & 93.7 (1.24)\\ 
				&$\beta_1^*(t)$           &	93.7 (1.08)   & 94.9 (1.60)   & 93.8 (1.14)\\ 
				&$\beta_{n/2+1}^*(t)$     &	95.5 (1.15)   & 97.5 (1.54)   & 95.4 (1.59)\\ 
				&$\gamma_1^*(t)$          &	94.6 (0.16)   & 95.3 (0.23)   & 95.2 (0.19)\\ 	 
				\hline
			\end{tabular}
		\end{center}
		\label{estimates} 
}\end{table}

\begin{figure}[h]
	\centering
	\subfloat[Subfigure 1 list of figures text][$n=100$, $\hat{\alpha}_1(t)$]{
		\includegraphics[width=0.33\textwidth]{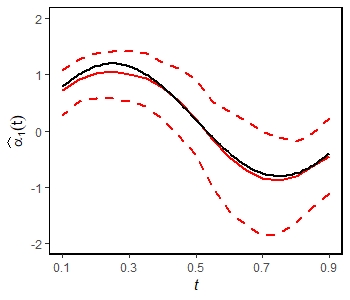}}
	\subfloat[Subfigure 2 list of figures text][$n=100$, $\hat{\beta}_1(t)$]{
		\includegraphics[width=0.33\textwidth]{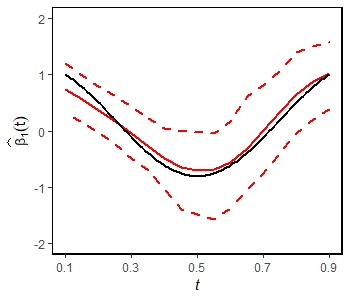}}
	\subfloat[Subfigure 3 list of figures text][$n=100$, $\hat{\gamma}_1(t)$]{
		\includegraphics[width=0.33\textwidth]{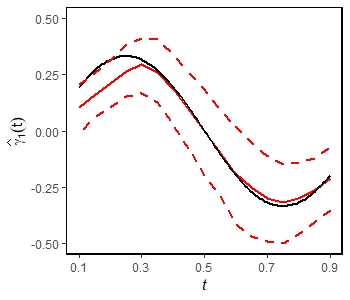}}
	\qquad
	\subfloat[Subfigure 1 list of figures text][$n=200$, $\hat{\alpha}_1(t)$]{
		\includegraphics[width=0.33\textwidth]{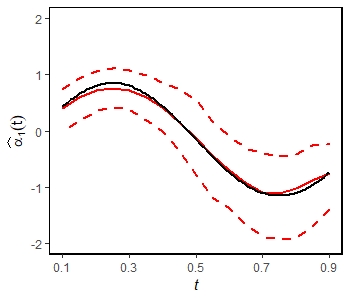}}
	\subfloat[Subfigure 2 list of figures text][$n=200$, $\hat{\beta}_1(t)$]{
		\includegraphics[width=0.33\textwidth]{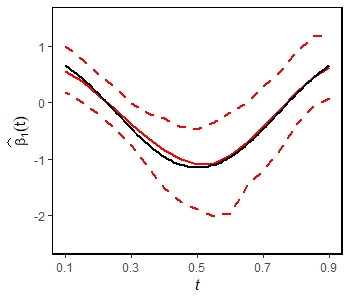}}
	\subfloat[Subfigure 3 list of figures text][$n=200$, $\hat{\gamma}_1(t)$]{
		\includegraphics[width=0.33\textwidth]{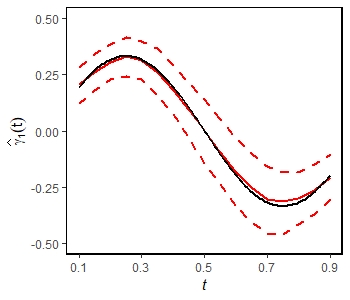}}
	\qquad
	\subfloat[Subfigure 1 list of figures text][$n=500$, $\hat{\alpha}_1(t)$]{
		\includegraphics[width=0.33\textwidth]{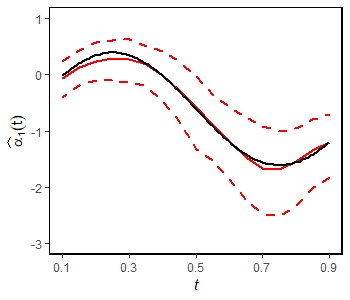}}
	\subfloat[Subfigure 2 list of figures text][$n=500$, $\hat{\beta}_1(t)$]{
		\includegraphics[width=0.33\textwidth]{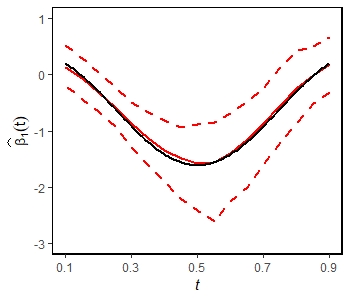}}
	\subfloat[Subfigure 3 list of figures text][$n=500$, $\hat{\gamma}_1(t)$]{
		\includegraphics[width=0.33\textwidth]{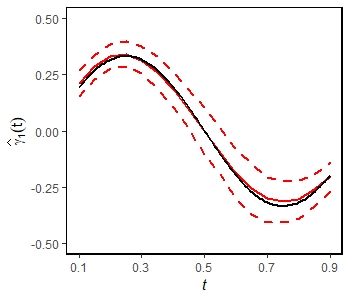}}
	\qquad
	\caption{The estimated curves for $\alpha_1^*(t),~\beta_1^*(t)$ and $\gamma_1^*(t).$ The black solid lines denote the true curves of $\alpha_1^*(t), ~\beta_1^*(t)$ and $\gamma_1^*(t).$ The red solid lines are the averages (over 1000 replications) of the proposed estimators $\widehat\alpha_1(t),~\widehat\beta_1(t)$ and $\widehat\gamma_1(t)$, while the red dashed lines represent the pointwise $95\%$ confidence intervals.}
	\label{fig:sendnew}
\end{figure}

\begin{figure}[h]
	\centering
	\subfloat[Subfigure 1 list of figures text][$b=0$]{
		\includegraphics[width=0.4\textwidth]{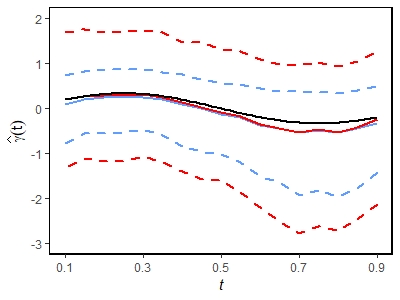}}
	\subfloat[Subfigure 3 list of figures text][$b=1/3$]{
		\includegraphics[width=0.4\textwidth]{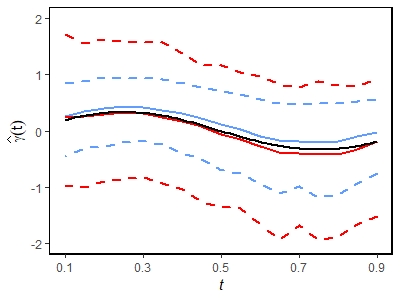}}\\
	\subfloat[Subfigure 3 list of figures text][$b=1/2$]{
		\includegraphics[width=0.4\textwidth]{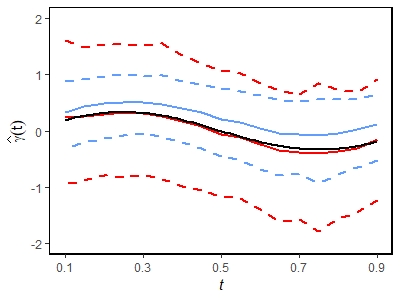}}
	\subfloat[Subfigure 3 list of figures text][$b=1$]{
		\includegraphics[width=0.4\textwidth]{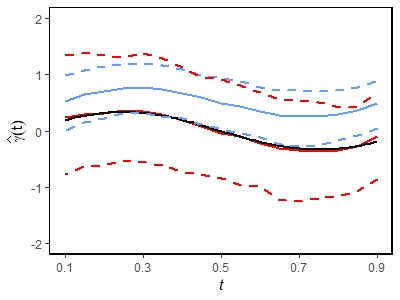}}
	\qquad
	\caption{The red solid lines are the average of the proposed estimators  of $\gamma^*(t)$ over 1000 replications,
		and the red dashed lines are the pointwise $95\%$ confidence intervals. The blue solid lines are the average of the estimators obtained by the method of \cite{kreib2019} over 1000 replications, and the blue dashed lines are the pointwise $95\%$ confidence intervals. The black solid lines represent the true curves of $\gamma^*(t).$}
	\label{fig:het}
\end{figure}

\begin{figure}[h]
	\centering
	\subfloat[Subfigure 1 list of figures text][Size and power of $\mathcal{T}_{\eta}$]{
		\includegraphics[width=0.43\textwidth]{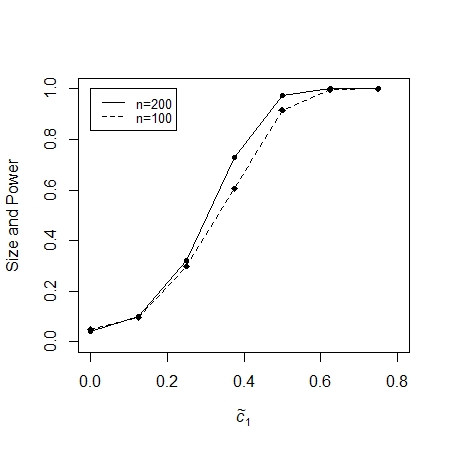}}
	\subfloat[Subfigure 3 list of figures text][Size and power of $\mathcal{T}_{\gamma}$]{
		\includegraphics[width=0.43\textwidth]{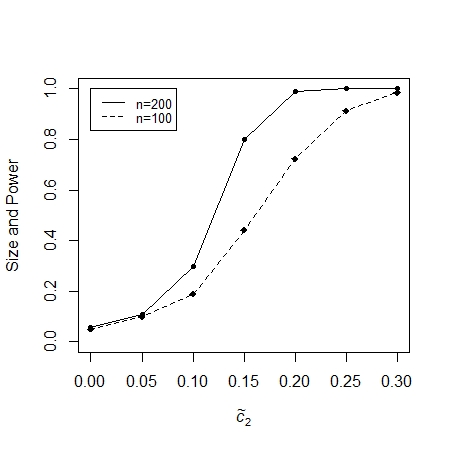}}
	\caption{The size and power of the test statistics $\mathcal{T}_{\eta}$ and $\mathcal{T}_{\gamma}$ as trending levels $\tilde{c}_1$ and $\tilde{c}_2$ increase.}
	\label{fig:con}
\end{figure}

\begin{figure}[h]
	\centering
	\subfloat{
		\includegraphics[width=0.45\textwidth]{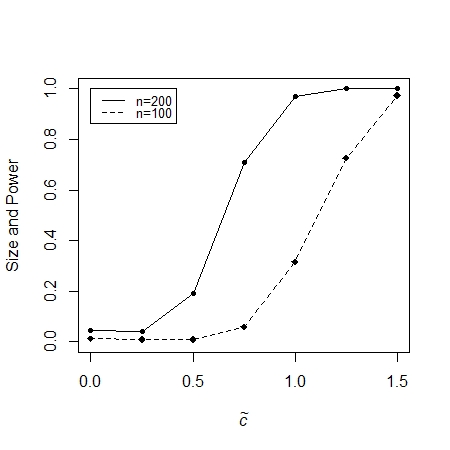}}
	\caption{The size and power of the test statistic $\mathcal{D}_{\alpha}$ as the heterogeneity level $\tilde{c}$ varies from 0 to 1.5.}
	\label{fig:power}
\end{figure}

% \begin{figure}[h]
	% 	\centering
	% 	\includegraphics[width=0.65\textwidth]{916/degree.jpeg}
	% 	\caption{The degree of each country in different study periods (The whole study period is equally divided into 4 periods from the beginning to the end).}
	% 	\label{fig:degree}
	% \end{figure}

\begin{figure}[h]
	\centering
	\subfloat[Subfigure 1 list of figures text][Jan. 2000 - Apr. 2002 (P1)]{
		\includegraphics[width=0.49\textwidth]{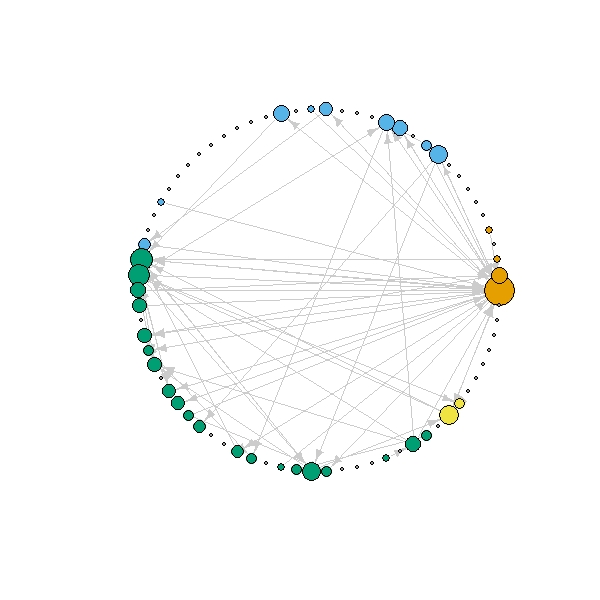}}
	\subfloat[Subfigure 3 list of figures text][May. 2002 - Aug. 2004 (P2)]{
		\includegraphics[width=0.49\textwidth]{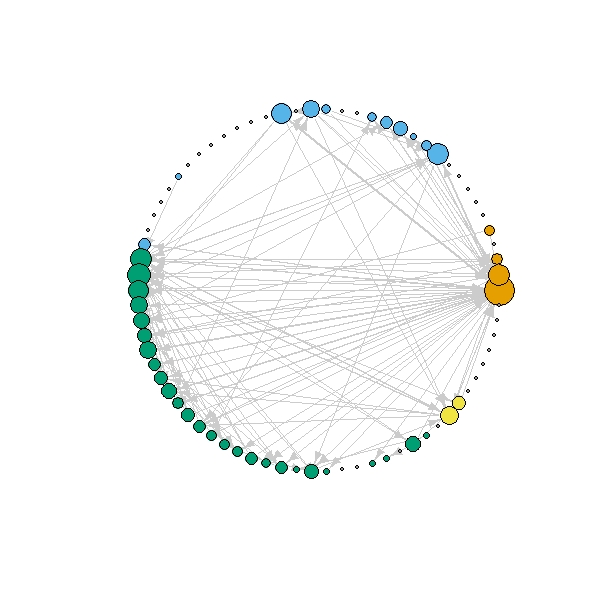}} \\
	\subfloat[Subfigure 1 list of figures text][Sep. 2004 - Jan. 2007 (P3)]{
		\includegraphics[width=0.49\textwidth]{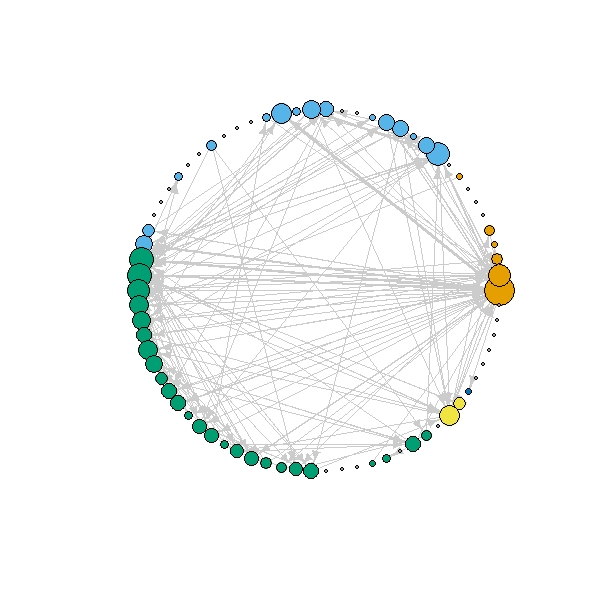}}
	\subfloat[Subfigure 3 list of figures text][Feb. 2007 - May. 2009 (P4)]{
		\includegraphics[width=0.49\textwidth]{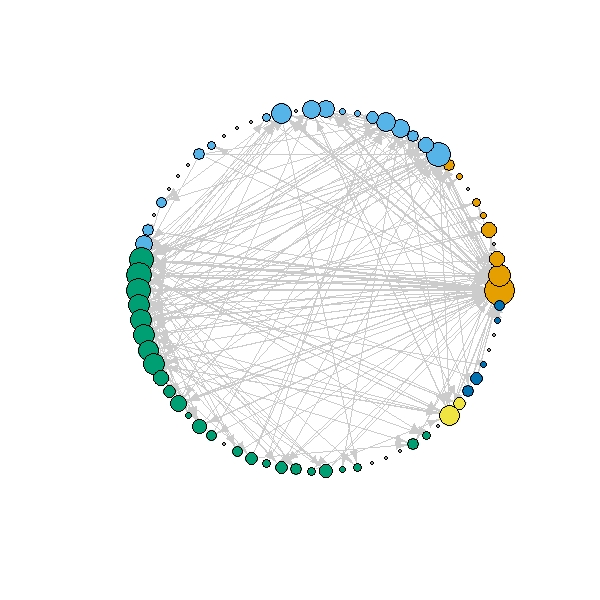}} \\
	\caption{The collaboration links of 74 countries from Jan. 2000 to May. 2009. The size of nodes corresponds to their in-degrees. The countries are color coded by continent: America (orange), Asia (cyan), Europe (green), Oceania (yellow) and Africa (dark blue).}
	\label{fig:struc}
\end{figure}

% \begin{table}[h] {\rm \  \hspace*{-0.3cm}{}
		% 					\caption{The dynamic mean and the trend level for parameters estimates throughout the whole period for some selected countries.}
		% 		\begin{center}
			% 			\renewcommand{\arraystretch}{1.15} \tabcolsep 0.25in  \doublerulesep 1.0pt
			% 			\begin{tabular}{cccccc} \hline 
				% 				Country && $m_{\alpha_i}$ & $\tau_{\alpha_i}$ & $m_{\beta_i}$  &
				% 			 $\tau_{\beta_i}$     \\ 
				% 				\hline
				% %				$20$	 && 0.381   &  0.325   & 0.623  & 0.251  & 0.056 \\ 
				% %				$50$	 &&	0.186   &  0.231   & 0.230  & 0.187  & 0.016 \\ 
				% 				Chad	 &&	1.67   & 3.40 & -1.04   & 4.61 \\ 	 
				% 				USA	 &&	7.00   &  10.71   & 6.01   & 11.14 \\ 	
				%                     Nigeria &&	-19.75   &  37.74  & -12.64   & 26.63 \\
				% 				\hline
				% 			\end{tabular}  
			% 		\end{center}
		
		% 		\label{TV} 
		% }\end{table}

\begin{figure}[h]
	\centering
	\subfloat[Subfigure 1 list of figures text][Chad, $\hat{\alpha}^{'}(t)$]{
		\includegraphics[width=0.38\textwidth]{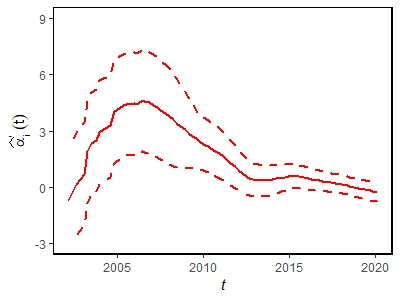}}
	\subfloat[Subfigure 2 list of figures text][Chad, $\hat{\beta}^{'}(t)$]{
		\includegraphics[width=0.38\textwidth]{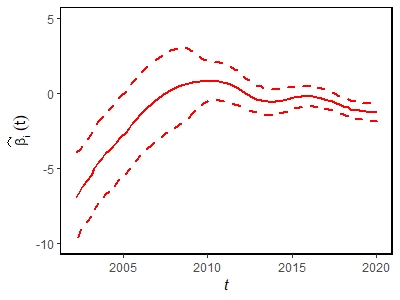}}
	\qquad
	\subfloat[Subfigure 1 list of figures text][USA, $\hat{\alpha}^{'}(t)$]{
		\includegraphics[width=0.38\textwidth]{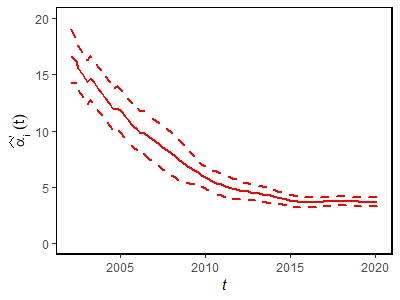}}
	\subfloat[Subfigure 2 list of figures text][USA, $\hat{\beta}^{'}(t)$]{
		\includegraphics[width=0.38\textwidth]{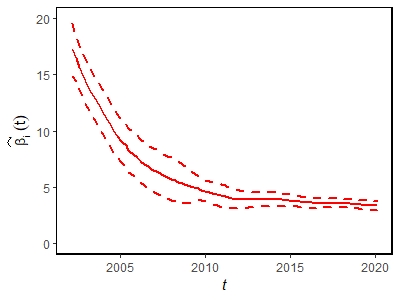}}
	\qquad
	\subfloat[Subfigure 1 list of figures text][Nigeria, $\hat{\alpha}^{'}(t)$]{
		\includegraphics[width=0.38\textwidth]{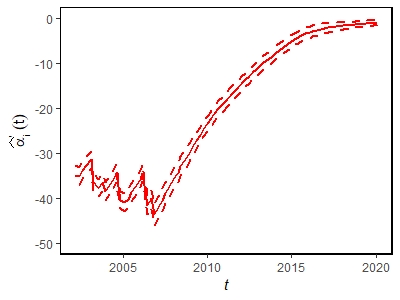}}
	\subfloat[Subfigure 2 list of figures text][Nigeria, $\hat{\beta}^{'}(t)$]{
		\includegraphics[width=0.38\textwidth]{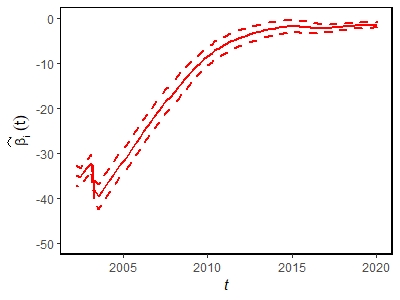}}
	%\caption{The estimates of the role as first author and collaborator from different countries.}
	\caption{Real data analysis: Estimation results of degree heterogeneity parameters. 
		The solid lines are the estimated curves, and the dashed lines denote their $95\%$ pointwise confidence bands.}
	\label{fig:realcountryID}
\end{figure}

\begin{figure}[h]
	\captionsetup[subfloat]{margin=10pt,format=hang,singlelinecheck=false}
	\centering
	\subfloat[Subfigure 1 list of figures text][$\hat{\gamma}_1(t)$ for covariate $M_iM_j$]{
		\includegraphics[width=0.33\textwidth]{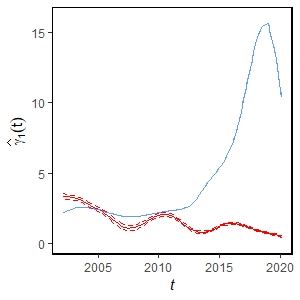}}
	\subfloat[Subfigure 2 list of figures text][$\hat{\gamma}_2(t)$ for covariate $M_iA_j$]{
		\includegraphics[width=0.33\textwidth]{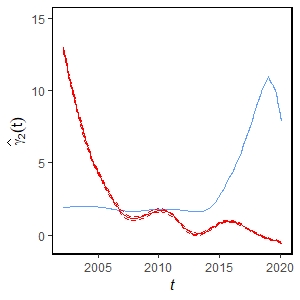}}
	\subfloat[Subfigure 2 list of figures text][$\hat{\gamma}_3(t)$ for covariate $M_iE_j$]{
		\includegraphics[width=0.33\textwidth]{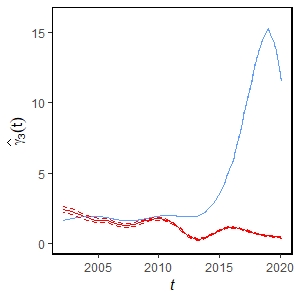}}
	\qquad
	\subfloat[Subfigure 1 list of figures text][$\hat{\gamma}_4(t)$ for covariate $A_iM_j$]{
		\includegraphics[width=0.33\textwidth]{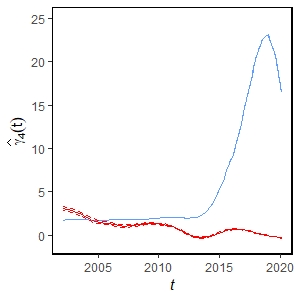}}
	\subfloat[Subfigure 2 list of figures text][$\hat{\gamma}_5(t)$ for covariate $A_iA_j$]{
		\includegraphics[width=0.33\textwidth]{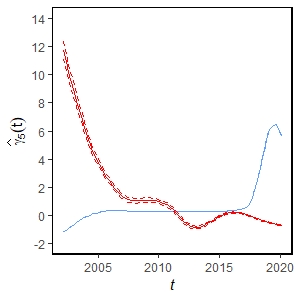}}
	\subfloat[Subfigure 2 list of figures text][$\hat{\gamma}_6(t)$ for covariate $A_iE_j$]{
		\includegraphics[width=0.33\textwidth]{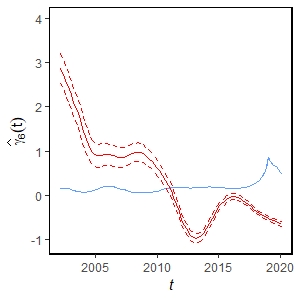}}
	\qquad
	\subfloat[Subfigure 1 list of figures text][$\hat{\gamma}_7(t)$ for covariate $E_iM_j$]{
		\includegraphics[width=0.33\textwidth]{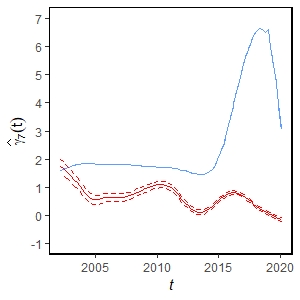}}
	\subfloat[Subfigure 2 list of figures text][$\hat{\gamma}_8(t)$ for covariate $E_iA_j$]{
		\includegraphics[width=0.33\textwidth]{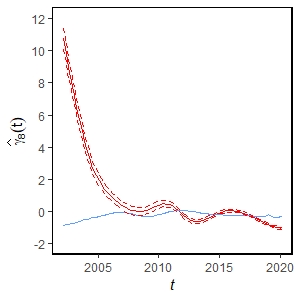}}
	\subfloat[Subfigure 2 list of figures text][$\hat{\gamma}_9(t)$ for covariate $E_iE_j$]{
		\includegraphics[width=0.33\textwidth]{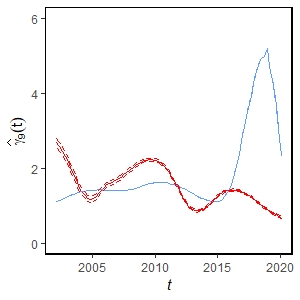}}
	\caption{Real data analysis: Estimation results of homophily parameters. The red solid lines represent our estimates and red dashed lines are their $95\%$ pointwise confidence bands. The blue lines are estimates obtained by the method of \cite{kreib2019}.}
	\label{fig:realcountryHT}
\end{figure}

%\iffalse
%
%\bibliography{reference2}
%
%\bibliographystyle{apalike}
%\addtolength{\itemsep}{-2 em}
%\fi

\clearpage
\newpage

\appendix

\newpage
\setcounter{page}{1}

\begin{center}
{\Large\bf \textsf{
Supplementary Material for ``A degree-corrected Cox model for dynamic
networks"}}
\end{center}

\section{Preliminaries}

In this section, we present some results that will be used in the proofs and state them as lemmas.
Given $b_L$, $b_U>0$, we say $M=(m_{i,j})_{n\times n}$ belongs to the matrix class $\mathcal{L}_{n}(b_L, b_U)$ if
$M$ satisfies
\begin{equation*}\label{eq1}
\begin{array}{l}
m_{i,j}=m_{j,i}=0, ~~ i,j=1,\dots,n,~i\neq j,\\
m_{i,j}=m_{j,i}=0, ~~ i,j=n+1,\dots,2n-1, ~i\neq j,\\
%m_{i,j}=m_{j,i}>0,~ ~ i,j=n+1,\dots,2n-1, ~i\neq j,\\
m_{n+i,i}=m_{i,n+i}=0, ~~ i,j=1,\dots,n-1, \tag{A.1}\\
b_L\le m_{i,i}-\sum_{j=n+1}^{2n-1} m_{i,j}\le b_U,~~ i=1,\dots,n,\\
m_{i,i}=\sum_{k=1}^{n}m_{k,i}=\sum_{k=1}^{n}m_{i,k}, ~~i=n+1,\dots,2n-1,\\
b_L\le m_{i,j}=m_{j,i} \le b_U, ~~ i=1,\ldots,n,~ j=n+1,\dots,2n-1;~ j\neq n+i.
\end{array}
\end{equation*}
Clearly, if $M\in \mathcal{L}_{n}(b_L, b_U)$, then $M$ is a $(2n-1)\times (2n-1)$ diagonally dominant, symmetric nonnegative matrix and $M$ has the following structure:
\[
M= \left(\begin{array}{ll} M_{11} & M_{12} \\
M_{12}^\top  & M_{22}
\end{array}\right) ,
\]
where $M_{11}$ ($n$ by $n$) and $M_{22}$ ($n-1$ by $n-1$) are diagonal matrices, 
$M_{12}$ ($n-1$ by $n$) is a nonnegative matrix whose non-diagonal elements are positive and diagonal elements equal to zero.
Here, the diagonal elements of $M_{12}$ refer to elements whose row and column indices are the same.
Generally, the inverse of $M$ does not have a closed form.
Define $m_{2n,i}=m_{i,2n}:= m_{i,i}-\sum_{j=1;j\neq i}^{2n-1} m_{i,j}$ for $i=1,\ldots, 2n-1$ and $m_{2n,2n}=\sum_{i=1}^{2n-1} m_{2n,i}$. Then $b_L \le m_{2n,i} \le b_U$ for $i=1,\ldots, n-1$, $m_{2n,i}=0$ for $i=n, n+1,\ldots, 2n-1$ and $m_{2n,2n}=\sum_{i=1}^n m_{i, 2n}=\sum_{i=1}^n m_{2n, i}$.
\cite{YLZ2016} proposed to use a simple matrix $\mathcal{S}(M)=(s_{ij})_{(2n-1)\times (2n-1)}$ to approximate the inverse of $M\in \mathcal{L}_n(b_L, b_M)$,
where $s_{ij}$ is defined as
\begin{equation*}
s_{i,j}=\left\{\begin{array}{ll}\frac{\delta_{i,j}}{m_{i,i}} + \frac{1}{m_{2n,2n}}, & i,j=1,\ldots,n, \\
-\frac{1}{m_{2n,2n}}, & i=1,\ldots, n,~~ j=n+1,\ldots,2n-1, \\
-\frac{1}{m_{2n,2n}}, & i=n+1,\ldots,2n-1,~~ j=1,\ldots,n, \\
\frac{\delta_{i,j}}{m_{i,i}}+\frac{1}{m_{2n,2n}}, & i,j=n+1,\ldots, 2n-1.
\end{array}
\right.
\end{equation*}
In the above equation, $\delta_{i,j}=1$ when $i=j$ and $\delta_{i,j}=0$ when $i\neq j$. %Note that $S$ can be rewritten as
%\[
%S = \begin{pmatrix} S_{11}  & S_{12} \\
%S_{12}^\top & S_{22}
%\end{pmatrix}
%\]
%where $S_{11} =1/v_{2n, 2n} + \mbox{diag}(1/v_{1,1}, 1/v_{2,2}, \ldots, 1/v_{n,n})$, $S_{12}$ is an $n\times (n-1)$ matrix whose elements are all equal to $-1/v_{2n, 2n}$, and $S_{22} =  1/v_{2n, 2n} + \mbox{diag}(1/v_{n+1, n+1}, 1/v_{n+2, n+2}, \ldots, 1/v_{2n-1, 2n-1})$.

Define $\|M\|_{\max}:=\max_{i,j} |m_{i,j}|$ 
as the maximum absolute entry-wise norm for any matrix $M=(a_{i,j}).$
\cite{Yan:Leng:Zhu:2016} proved that the upper bound of the approximation error has an order $n^{-2}$.
%\begin{lemma}[Proposition 1 in \cite{Yan:Leng:Zhu:2016}] \label{lemma:inverse:appro}
%If $V \in \mathcal{L}_n(m, M)$, then
%\begin{equation}\label{O-upperbound}
%\|V^{-1} - S \|_{\infty}=O\left(\frac{C(m,M)}{n^2}\right),
%%% \le  \frac{ M(nM+(n-2)m)}{2m^3(n-2)(n-1)^2}+\frac{1}{2m(n-1)^2}+\frac{1}{m n(n-1)}
%\end{equation}
%where $\|A\|_{\infty} := \max_{i,j} |a_{ij}|$ for a general matrix $A$
%and $C(m,M)=(1+M/m)M/m^2.$
%\end{lemma}

\begin{lemma} [Proposition 1 in \cite{YLZ2016}] \label{lem1}
If $M\in \mathcal{L}_n(b_L, b_U)$ with $b_U/b_L=o(n)$, then for large enough $n$,
\[
\| M^{-1}-\mathcal{S}(M) \|_{\max} \le \frac{c_1b_U^2}{b_L^3(n-1)^2},
\]
where $c_1$ is a constant that does not depend on $M$, $m$ and $n$.
\end{lemma}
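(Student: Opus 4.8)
The plan is to treat $S:=\mathcal S(M)$ as an explicit first-order approximation to $M^{-1}$ and to control the error by a Neumann-type perturbation argument carried out directly in the entrywise max-norm; the role of the hypothesis $b_U/b_L=o(n)$ is precisely to make the perturbation small. Set $R:=MS-I_{2n-1}$. From $M^{-1}-S=-M^{-1}R$ and $M^{-1}=S+(M^{-1}-S)$ one obtains the self-referential identity
\[
M^{-1}-S=-SR-(M^{-1}-S)R ,
\]
and, using $|(AB)_{ij}|\le\big(\max_k|A_{ik}|\big)\sum_k|B_{kj}|$, this gives $\|M^{-1}-S\|_{\max}\le\|S\|_{\max}\|R\|_{1}+\|M^{-1}-S\|_{\max}\|R\|_{1}$, where $\|\cdot\|_{1}$ denotes the maximum absolute column sum. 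Hence, as soon as $\|R\|_{1}<1$,
\[
\|M^{-1}-S\|_{\max}\le\frac{\|S\|_{\max}\,\|R\|_{1}}{1-\|R\|_{1}} ,
\]
so the proof reduces to (i) an easy upper bound on $\|S\|_{\max}$ and (ii) the substantive estimate, an upper bound on $\|R\|_{1}$.

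For (i): the structural constraints \eqref{eq1} give $m_{i,i}\ge(n-2)b_L$ for $i\in[n]$ (since $m_{i,i}$ dominates its $n-2$ cross-block entries, each $\ge b_L$), $m_{i,i}\ge(n-1)b_L$ for $i\in[n-1]_n$, and $m_{2n,2n}=\sum_{i=1}^{n-1}m_{2n,i}\ge(n-1)b_L$; since every entry of $S$ has the form $\delta_{ij}/m_{i,i}\pm 1/m_{2n,2n}$, this yields $\|S\|_{\max}\le c/(b_L(n-1))$ with $c$ absolute. For (ii) one expands $(MS)_{ij}=\sum_k m_{i,k}s_{k,j}$, splitting $s_{k,j}$ into the diagonal piece $\delta_{kj}/m_{k,k}$ (present only when $k,j$ lie in the same block) and the constant piece $\pm1/m_{2n,2n}$. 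The diagonal piece contributes $m_{i,j}/m_{j,j}$, and the constant piece contributes $\pm m_{2n,2n}^{-1}\sum_k m_{i,k}$; at this point the structural identities — $m_{i,i}=\sum_k m_{k,i}=\sum_k m_{i,k}$ for $i\in[n-1]_n$, the virtual-node relation $m_{2n,i}=m_{i,i}-\sum_{j\ne i,\,j\le 2n-1}m_{i,j}$, and $m_{2n,2n}=\sum_{i\le n-1}m_{2n,i}$ — are exactly what make $(MS)_{ij}$ collapse to $\delta_{ij}$ plus a genuinely small remainder. A careful accounting of the surviving terms should show that the entries of $R$ are $O\big(b_U/(b_L n)\big)$, with the bulk of them only $O\big(b_U/(b_L n^2)\big)$, so that $\|R\|_{1}=O\big(b_U/(b_L n)\big)$, which is $o(1)$ under $b_U/b_L=o(n)$. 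Substituting the two bounds into the displayed inequality gives $\|M^{-1}-S\|_{\max}\le 2\|S\|_{\max}\|R\|_{1}=O\big(b_U/(b_L^2(n-1)^2)\big)\le c_1 b_U^2/(b_L^3(n-1)^2)$, using $b_L\le b_U$; this is the assertion.

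The step I expect to be the main obstacle is precisely the bound on $R$ in (ii). The two diagonal blocks of $M$ play asymmetric roles — rows indexed by $[n]$ carry a strictly positive diagonal excess $m_{i,i}-\sum_{j=n+1}^{2n-1}m_{i,j}\in[b_L,b_U]$, while rows indexed by $[n-1]_n$ have zero excess — and the virtual $2n$-th coordinate is the device that restores a Laplacian-type balance between them. Verifying that the many $O(1/n)$ corrections coming from the $1/m_{2n,2n}$ terms telescope across the two blocks down to $O\big(b_U/(b_L n)\big)$ per column, rather than simply accumulating to something of order $b_U/b_L$, is the delicate combinatorial bookkeeping; by contrast the matrix-identity manipulations, the elementary lower bounds on the $m_{i,i}$, and the final Neumann estimate are routine.
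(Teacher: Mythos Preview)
The paper does not prove this lemma; it is quoted as Proposition~1 of \cite{YLZ2016} and used as a black box, so there is no in-paper argument to compare against. Your plan, however, has a concrete gap at exactly the point you flag as the main obstacle.

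The claim $\|R\|_{1}=O\big(b_U/(b_L n)\big)$ is false. Fix a column $j\in[n]$. For $i\in[n]$, since $m_{ik}=0$ whenever $k\in[n]\setminus\{i\}$,
\[
(MS)_{ij}=m_{ii}\Big(\frac{\delta_{ij}}{m_{ii}}+\frac{1}{m_{2n,2n}}\Big)-\frac{1}{m_{2n,2n}}\sum_{k=n+1}^{2n-1}m_{ik}
=\delta_{ij}+\frac{m_{ii}-\sum_{k>n}m_{ik}}{m_{2n,2n}}
=\delta_{ij}+\frac{m_{2n,i}}{m_{2n,2n}},
\]
so $R_{ij}=m_{2n,i}/m_{2n,2n}>0$ for every $i\in[n]$, and these already sum to $\sum_{i\le n}m_{2n,i}/m_{2n,2n}=1$. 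The rows $i\in[n-1]_n$ contribute a further $R_{ij}=m_{ij}/m_{jj}\ge 0$, summing to $1-m_{2n,j}/m_{jj}$. Hence $\sum_i|R_{ij}|=2-m_{2n,j}/m_{jj}\to 2$, so $\|R\|_1\ge 1$ for every $n$ and the inequality $\|M^{-1}-S\|_{\max}\le\|S\|_{\max}\|R\|_1/(1-\|R\|_1)$ is vacuous. The $O(1/n)$ entries do not telescope down the column; they accumulate exactly to~$1$.

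What does survive from your calculation is that $\|SR\|_{\max}$ is already of the target order: for $i,j\in[n]$ one finds $(SR)_{ij}=m_{2n,i}/(m_{ii}m_{2n,2n})+m_{2n,j}/(m_{jj}m_{2n,2n})=O\big(b_U/(b_L^2(n-1)^2)\big)$, and the remaining three blocks are of the same size or identically zero. So the leading term $-SR$ in $M^{-1}-S=-SR-(M^{-1}-S)R$ is correct; the failure is solely in controlling the residual $(M^{-1}-S)R$ via the crude bound $\|M^{-1}-S\|_{\max}\|R\|_1$. Closing the argument requires exploiting the explicit structure of $R$ --- its lower-right $(n-1)\times(n-1)$ block vanishes and its upper-left $n\times n$ block is rank one --- rather than any operator-norm contraction. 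That structural handling of the residual, not a Neumann bound, is the missing ingredient.
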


Let $G(x): \R^n \to \R^n$ be a function vector on $x\in\R^n$. We say that a Jacobian matrix $G^{(1)}(x)$ with $x\in \R^n$ is Lipschitz continuous on a convex set $\mathcal{D}\subset\R^n$ if
for any $x,y\in \mathcal{D}$, there exists a constant $\lambda>0$, such that
for any vector $v\in \R^n$, the inequality
\begin{equation*}
\| \left\{ G^{(1)} (x) - G^{(1)} (y) \right\} v \|_{\infty} \le \lambda \| x - y \|_{\infty} \|v\|_{\infty}
\end{equation*}
holds.

We introduce an error bound in the Newton method by \cite{Kantorovich-Akilov1964}
under the Kantorovich conditions \citep{Kantorovich1948Functional}.

\begin{lemma}[Theorem 6 in \cite{Kantorovich-Akilov1964}]
\label{lem2}
Let $\mathcal{D}$ be an open convex subset of $\R^n$ and
$F:\mathcal{D} \to \R^n$ be Fr\'{e}chet differentiable.
Assume that, at some ${x}_0 \in \mathcal{D}$, $F^\prime({x}_0)$ is invertible and that
\begin{eqnarray}
\label{eq-kantororich-a}
\| F^\prime({x}_0)^{-1} ( F^\prime({x}) - F^\prime({y}))\| \le K\|{x}-{y}\|,~~ {x}, {y}\in \mathcal{D}, \\
\label{eq-kantororich-b}
\| F^\prime({x}_0)^{-1} F({x}_0) \| \le \eta,~~ h=K\eta \le 1/2, \\
\nonumber
\bar{S}({x}_0, t^*) \subseteq \mathcal{D},~~ t^*=2\eta/( 1+ \sqrt{ 1-2h}).
\end{eqnarray}
Then:
(1) The Newton iterates ${x}_{n+1} = {x}_n - \{ F^\prime ({x}_n)\}^{-1} F({x}_n)$, $n\ge0$, are well-defined,
lie in $\bar{S}({x}_0, t^*)$ and converge to a solution ${x}^*$ of $F({x})=0$. \\
(2) The solution ${x}^*$ is unique in $S({x}_0, t^{**})\cap \mathcal{D}$, $t^{**}=(1 + \sqrt{1-2h})/K$ if $2h<1$
and in $\bar{S}({x}_0, t^{**})$ if $2h=1$. \\
(3) $\| {x}^* - {x}_n \| \le t^*$ if $n=0$ and $\| {x}^* - {x}_n \| \le 2^{1-n} (2h)^{ 2^n -1 } \eta $ if $n\ge 1$.
\end{lemma}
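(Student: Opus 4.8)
The statement is the classical Newton--Kantorovich theorem, and the plan is to reproduce the majorant-sequence argument of \cite{Kantorovich-Akilov1964}. I would begin by introducing the scalar \emph{majorant polynomial} $\phi(t)=\tfrac{K}{2}t^{2}-t+\eta$, whose roots, under $h=K\eta\le 1/2$, are precisely $t^{*}=2\eta/(1+\sqrt{1-2h})$ and $t^{**}=(1+\sqrt{1-2h})/K$; on $[0,t^{*})$ one has $\phi>0$ and $\phi'(t)=Kt-1<0$, and $Kt^{*}\le 1$. Running the scalar Newton iteration $t_{0}=0$, $t_{n+1}=t_{n}-\phi(t_{n})/\phi'(t_{n})$, a short computation gives $t_{1}=\eta$ (matching \eqref{eq-kantororich-b}), the quadratic-remainder identity $\phi(t_{n+1})=\tfrac{K}{2}(t_{n+1}-t_{n})^{2}$, and the fact that $\{t_{n}\}$ is strictly increasing with $t_{n}\uparrow t^{*}$, so $\sum_{n\ge 0}(t_{n+1}-t_{n})=t^{*}<\infty$.

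The heart of the proof is a simultaneous induction showing, for every $n\ge 0$: (a) $x_{n}\in\bar{S}(x_{0},t^{*})\subseteq\mathcal{D}$, and $F'(x_{n})$ is invertible with $\|F'(x_{n})^{-1}F'(x_{0})\|\le -1/\phi'(t_{n})=1/(1-Kt_{n})$; (b) $x_{n+1}$ is well defined and $\|x_{n+1}-x_{n}\|\le t_{n+1}-t_{n}$. The case $n=0$ is just the hypotheses ($F'(x_{0})$ invertible and \eqref{eq-kantororich-b}). For the inductive step I would: (i) use \eqref{eq-kantororich-a} and a telescoping of (b) to get $\|F'(x_{0})^{-1}(F'(x_{n+1})-F'(x_{0}))\|\le K\|x_{n+1}-x_{0}\|\le Kt_{n+1}<Kt^{*}\le 1$, then apply the Banach perturbation lemma to obtain invertibility of $F'(x_{n+1})$ and the bound in (a); (ii) apply Taylor's formula with integral remainder, $F(x_{n+1})=F(x_{n})+F'(x_{n})(x_{n+1}-x_{n})+\int_{0}^{1}\bigl[F'(x_{n}+s(x_{n+1}-x_{n}))-F'(x_{n})\bigr](x_{n+1}-x_{n})\,ds$, in which the first two terms cancel because $x_{n+1}-x_{n}=-F'(x_{n})^{-1}F(x_{n})$, to deduce $\|F'(x_{0})^{-1}F(x_{n+1})\|\le\tfrac{K}{2}\|x_{n+1}-x_{n}\|^{2}\le\tfrac{K}{2}(t_{n+1}-t_{n})^{2}=\phi(t_{n+1})$; (iii) multiply the estimates in (a) and (ii) to get $\|x_{n+2}-x_{n+1}\|\le\|F'(x_{n+1})^{-1}F'(x_{0})\|\,\|F'(x_{0})^{-1}F(x_{n+1})\|\le -\phi(t_{n+1})/\phi'(t_{n+1})=t_{n+2}-t_{n+1}$. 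I expect this step to be the main obstacle: invertibility of $F'(x_{n+1})$, the containment $x_{n+1}\in\bar{S}(x_{0},t^{*})$, and the step-size domination must all be propagated together, since each is needed before the next iterate can even be defined, and the Lipschitz datum \eqref{eq-kantororich-a} enters only through its coupling with the scalar identities for $\phi$.

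Given (a)--(b), $\sum_{n}(t_{n+1}-t_{n})<\infty$ forces $\{x_{n}\}$ to be Cauchy, hence convergent to some $x^{*}\in\bar{S}(x_{0},t^{*})$ with $\|x^{*}-x_{n}\|\le t^{*}-t_{n}$; passing to the limit in $\|F'(x_{0})^{-1}F(x_{n})\|\le\phi(t_{n})\to\phi(t^{*})=0$ and using continuity of $F$ gives $F(x^{*})=0$, which is (1). For uniqueness (2), I would run the same majorant comparison against an arbitrary zero $y^{*}$ lying in $S(x_{0},t^{**})\cap\mathcal{D}$ (respectively $\bar{S}(x_{0},t^{**})$ when $2h=1$), using the companion scalar sequence that increases to $t^{**}$ to show $\|x_{n}-y^{*}\|\to 0$, whence $y^{*}=x^{*}$. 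Finally (3) reduces, via $\|x^{*}-x_{n}\|\le t^{*}-t_{n}$, to elementary estimates on the scalar sequence: the contraction $t^{*}-t_{n+1}\le\tfrac12(t^{*}-t_{n})$ together with the quadratic relation $(t^{*}-t_{n+1})/(t^{**}-t_{n+1})=\bigl((t^{*}-t_{n})/(t^{**}-t_{n})\bigr)^{2}$ (so that $(t^{*}-t_{n})/(t^{**}-t_{n})=\theta^{2^{n}}$ with $\theta=t^{*}/t^{**}$), after telescoping $\prod_{k=0}^{n-1}(1+\theta^{2^{k}})=(1-\theta^{2^{n}})/(1-\theta)$ and invoking Bernoulli's inequality, yield $t^{*}-t_{0}=t^{*}$ and $t^{*}-t_{n}\le 2^{1-n}(2h)^{2^{n}-1}\eta$ for $n\ge 1$.
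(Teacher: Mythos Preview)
Your proposal reproduces the standard majorant-sequence proof of the Newton--Kantorovich theorem, and the argument as sketched is correct. However, there is nothing to compare it against: the paper does not prove Lemma~\ref{lem2} at all. The lemma is stated as a direct citation of Theorem~6 in \cite{Kantorovich-Akilov1964} and is used as a black box in the proof of Lemma~\ref{lem6} (and implicitly in Theorem~\ref{theorem:consistency}). The paper's ``proof'' is simply the reference itself.

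So your write-up is not wrong, but it answers a question the paper never poses. If the assignment is to compare your argument with the paper's own reasoning for this statement, the honest answer is that the paper offers none; your majorant argument is precisely the one found in the cited source.
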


The following lemma gives an exponential bound for 
the moment of a bounded random variable. 

\begin{lemma}\label{lemma3}
Let $Y$ be a random variable.
If $|Y|<C_0$ for some constant $C_0$ and $\mathbb{E}(Y)=0,$ then
for all sufficiently small $\upsilon>0,$
$$
\mathbb{E}\{\exp(\upsilon Y)\big\}\le \exp\big\{\upsilon^2\mathbb{E}(Y^2)\big\}.
$$
\end{lemma}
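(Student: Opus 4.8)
The plan is to expand $\exp(\upsilon Y)$ as a power series, take expectations term by term, and exploit the two hypotheses separately: the mean-zero assumption annihilates the linear term, and the boundedness $|Y|<C_0$ lets us dominate every higher moment by the second moment $\mathbb{E}(Y^2)$.

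First I would write $\exp(\upsilon Y)=1+\upsilon Y+\sum_{k=2}^{\infty}(\upsilon Y)^k/k!$. Since $|Y|\le C_0$ almost surely, $|\exp(\upsilon Y)|\le \exp(\upsilon C_0)<\infty$, so dominated convergence justifies exchanging the expectation with the infinite sum; using $\mathbb{E}(Y)=0$ this gives
\[
\mathbb{E}\{\exp(\upsilon Y)\}=1+\sum_{k=2}^{\infty}\frac{\upsilon^k\,\mathbb{E}(Y^k)}{k!}.
\]
Next, for every $k\ge 2$ the pointwise inequality $|Y|^k\le C_0^{k-2}Y^2$ yields $|\mathbb{E}(Y^k)|\le C_0^{k-2}\mathbb{E}(Y^2)$, hence
\[
\mathbb{E}\{\exp(\upsilon Y)\}\le 1+\upsilon^2\mathbb{E}(Y^2)\sum_{k=2}^{\infty}\frac{(\upsilon C_0)^{k-2}}{k!}.
\]

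Then I would restrict to $\upsilon$ small enough that $\upsilon C_0\le 1$ (any $\upsilon\le 1/C_0$ works when $C_0>0$; the case $C_0=0$ is trivial). For such $\upsilon$ each summand is at most $1/k!$, so $\sum_{k\ge 2}(\upsilon C_0)^{k-2}/k!\le \sum_{k\ge 2}1/k!=e-2<1$. Therefore $\mathbb{E}\{\exp(\upsilon Y)\}\le 1+\upsilon^2\mathbb{E}(Y^2)$, and since $1+x\le e^x$ for all $x\ge 0$, the right-hand side is bounded by $\exp\{\upsilon^2\mathbb{E}(Y^2)\}$, which is the asserted bound.

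There is no genuine obstacle here; the only steps needing a line of care are the dominated-convergence justification for the term-by-term expectation and the elementary moment bound $|\mathbb{E}(Y^k)|\le C_0^{k-2}\mathbb{E}(Y^2)$, both immediate from $|Y|<C_0$. The threshold on $\upsilon$ could be sharpened, but since the statement only asserts validity for all sufficiently small $\upsilon$, the crude bound $\upsilon\le 1/C_0$ is enough.
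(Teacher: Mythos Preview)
Your proof is correct and follows essentially the same approach as the paper: expand the exponential, use $\mathbb{E}(Y)=0$ to kill the linear term, dominate $|\mathbb{E}(Y^k)|$ by $C_0^{k-2}\mathbb{E}(Y^2)$, bound the resulting series for small $\upsilon$, and finish with $1+x\le e^x$. The only cosmetic difference is that the paper separates the $k=2$ term and bounds the tail from $k=3$ via a geometric series (using $j!\ge 2\cdot 3^{j-2}$), whereas you keep all terms from $k=2$ together and use $\sum_{k\ge 2}1/k!=e-2<1$; both routes land on the same conclusion.
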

\begin{proof}
By Taylor's expansion, for a small $\upsilon>0$,
we have
\begin{align*}
\mathbb{E}\big\{\exp(\upsilon Y)\big\}=&1+\upsilon \mathbb{E}(Y)+\frac{1}{2}\upsilon^2\mathbb{E}(Y^2)+\sum_{j=3}^{\infty}\frac{\upsilon^j\mathbb{E}Y^j}{j!}\\
\le & 1+\frac{1}{2}\upsilon^2\mathbb{E}(Y^2)
+\frac{\upsilon^2\mathbb{E}Y^2}{2}\sum_{j=3}^{\infty}\frac{\upsilon^{j-2}C_0^{j-2}}{3^{j-2}}\\
=& 1+\frac{1}{2}\upsilon^2\mathbb{E}(Y^2)
+\frac{\upsilon^2\mathbb{E}Y^2}{2}\frac{\upsilon C_0/3}{1-\upsilon C_0/3}.
\end{align*}
By choosing $0<\upsilon<\min\{1,2C_0/3\},$ we obtain
\begin{align*}
1+\frac{1}{2}\upsilon^2\mathbb{E}(Y^2)
+\frac{\upsilon^2\mathbb{E}Y^2}{2}\frac{\upsilon C_0/3}{1-\upsilon C_0/3}
\le 1+\upsilon^2\mathbb{E}(Y^2)
\le \exp\Big\{\upsilon^2\mathbb{E}(Y^2)\Big\},
\end{align*}
where the last inequality holds for all small $\upsilon>0.$
This completes the proof.
\end{proof}

\section{Proofs of Theorems \ref{theorem:consistency}-\ref{theorem-central-degree}}
\label{section-proof-th15}
\subsection{Proof of Theorem \ref{theorem:consistency}}
\label{section-proof-con}

In this section, we present the proof of Theorem \ref{theorem:consistency}.
%Before beginning the proof of Theorem \ref{theorem:consistency}, 
We first prove three
lemmas. The  first lemma is about 
 the upper bounds of $\|F^*(t)\|_\infty$ and $\|Q^*(t)\|_\infty$,
where $F^*(t)=F(\theta^*(t))$ and $Q^*(t)=Q(\theta^*(t))$.
For a given $\gamma(t)$, write
\[
F_{\gamma}(\eta(t))=(F_{1,\gamma}(\eta(t)),\dots,F_{2n-1,\gamma}(\eta(t)))^\top=F(\eta(t),\gamma(t)).
\]
Define $\alpha_{i,\gamma}^*(t)$ and $\beta_{j,\gamma}^*(t)$ as the solution to $\mathbb{E}\{\mathcal{M}_{ij}(t; \alpha_i(t),\beta_j(t),\gamma(t))\}=0~(1\le i\neq j \le n)$ with a given $\gamma(t).$ Further define $\eta^*_\gamma(t)=(\alpha_{1,\gamma}^*(t),\dots,\alpha_{n,\gamma}^*(t),
\beta_{1,\gamma}^*(t),\dots,\beta_{n,\gamma}^*(t))^\top.$
%and $\pi_{ij}^*(t)=\alpha_{i}^*(t)+\beta_j^*(t)+Z_{ij}(t)^\top\gamma^*(t).$
% \begin{align*}
% \sum_{j=1,j\neq i}^n
% \mathbb{E}\Bigg\{\exp\{\pi_{ij}^*(t)\}-\exp\{\alpha_{i,\gamma}^*(t)+\beta_{j,\gamma}^*(t)+Z_{ij}(t)^\top\gamma(t)\}\Bigg\}&=0,~i\in[n],\\
% \sum_{i=1,i\neq j}^n
% \mathbb{E}\Bigg\{\exp\{\pi_{ij}^*(t)\}-\exp\{\alpha_{i,\gamma}^*(t)+\beta_{j,\gamma}^*(t)+Z_{ij}(t)^\top\gamma(t)\}\Bigg\}&=0,~
% (j\in[n-1])
% \end{align*}

%$\mathbb{E}\{F_i(t)\}~(i=1,\dots,2n-1)$  

\begin{lemma}
\label{lem4}
Suppose Conditions \ref{condition:CB1}-\ref{condition:bandwidth5} holds,
and $\gamma(t)$ takes values in a compact set $\mathcal{J}$ of $\mathbb{R}^p.$
Then we have
\begin{eqnarray}
\label{ineq-bound-Ft}
\sup_{t\in[a,b]}\|F^*(t)\|_\infty & = & O_p\Big((q_n+1) e^{q_n}\sqrt{\frac{\log nh_1}{nh_1}}\Big), \\
\label{ineq-bound-Qt}
\sup_{t\in[a,b]}\|Q^*(t)\|_\infty & = & 
O_p\Big(\kappa_n(q_n+1) e^{q_n}\sqrt{\frac{\log Nh_2}{Nh_2}}\Big), \\
\label{ineq-bound-lemma4-3}
\sup_{t\in[a,b]}\sup_{\gamma(t)\in\mathcal{J}}\|F_{\gamma}(\eta_{\gamma}^*(t))\|_\infty& =&O_p\Big((q_n+1) e^{q_n}\sqrt{\frac{\log nh_1}{nh_1}}\Big).
\end{eqnarray}
\end{lemma}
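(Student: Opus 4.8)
\emph{Proof plan.} All three bounds share one architecture. For each coordinate of $F^*(t)$, $Q^*(t)$ and $F_\gamma(\eta_\gamma^*(t))$ I would split the statistic into a mean-zero stochastic-integral part and a smoothing-bias part, control the first by an exponential concentration inequality made uniform in $t$ by discretization, and show the second is of strictly smaller order under Condition~\ref{condition:bandwidth5}. I spell out \eqref{ineq-bound-Ft}; \eqref{ineq-bound-Qt} is identical with $h_1$ replaced by $h_2$ and one extra factor of $\kappa_n$, and \eqref{ineq-bound-lemma4-3} needs one additional covering argument over $\mathcal J$.

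Fix $i\in[n]$. Writing $\widetilde M_i(s)=\sum_{k\ne i}\mathcal M_{ik}(s)$ for the true zero-mean martingale, decompose
\[
F_i^*(t)=\frac{1}{n-1}\int_0^\tau \mathcal K_{h_1}(s-t)\,d\widetilde M_i(s)
+\frac{1}{n-1}\sum_{j\ne i}\int_0^\tau \mathcal K_{h_1}(s-t)\Big(e^{\pi_{ij}^*(s)}-e^{\alpha_i^*(t)+\beta_j^*(t)+Z_{ij}(s)^\top\gamma^*(t)}\Big)\,ds .
\]
The first term is a scaled local square-integrable martingale in $\tau$; by Conditions~\ref{condition:PB2} and \ref{condition:kernel4} its predictable variation is at most $(n-1)^{-2}\sum_{j\ne i}\int_0^\tau \mathcal K_{h_1}^2(s-t)e^{\pi_{ij}^*(s)}ds\le \mu_0 e^{q_n}/\{(n-1)h_1\}$ and its jumps are at most $\|\mathcal K\|_\infty/\{(n-1)h_1\}$. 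Approximating the integral by Riemann--Stieltjes sums over a fine time mesh and applying Lemma~\ref{lemma3} (or a Freedman/Bernstein-type inequality) to the bounded martingale increments gives, for each fixed $t$, that the probability of exceeding $x$ is at most $\exp\{-c\,x^2 nh_1 e^{-q_n}\}$ for moderate $x$. To pass to $\sup_{t\in[a,b]}$ I would use that $t\mapsto \int_0^\tau\mathcal K_{h_1}(s-t)d\widetilde M_i(s)$ has increments bounded by $C|t_1-t_2|h_1^{-2}\,\mathrm{TV}(\widetilde M_i)$, that $\mathrm{TV}(\widetilde M_i)$ is of order $ne^{q_n}$ uniformly in $i$ with high probability (Poisson-type concentration of $N_{ij}(\tau)$), choose a grid of polynomially small mesh, and union-bound over the grid and over $i\in[n]$. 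This yields $\sup_{t\in[a,b]}\max_i|\cdot|=O_p\big(e^{q_n/2}\sqrt{\log(nh_1)/(nh_1)}\big)$, comfortably inside the stated rate. Inserting $Z_{ij,k}(s)$ into the integrand (bounded by $\kappa_n$ via Condition~\ref{condition:CB1}) and normalizing by $N$ with bandwidth $h_2$ produces the stochastic part of \eqref{ineq-bound-Qt}.

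For the bias term, Condition~\ref{condition:kernel4} forces $|s-t|\le h_1$ on the support of $\mathcal K_{h_1}(\cdot-t)$, so a second-order Taylor expansion of $\alpha_i^*,\beta_j^*,\gamma^*$ about $t$ writes the exponent difference as $(s-t)$ times a quantity not depending on $s$, plus an $O((s-t)^2)$ remainder; multiplying by the frozen intensity, which is at most $e^{q_n}$, and integrating against the symmetric kernel annihilates the linear term and leaves a contribution of order $e^{q_n}\,\mathrm{poly}(q_n)\,h_1^2$ uniformly in $i,j,t$, while its fluctuation around its mean concentrates at the faster rate $O_p(e^{q_n}\sqrt{\log n/n})$. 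Both are $o_p$ of the stochastic rate once $ne^{2q_n}h_1^5\to0$ (Condition~\ref{condition:bandwidth5}), giving \eqref{ineq-bound-Ft}; the same computation under $ne^{q_n}h_2^{5/2}\to0$ gives \eqref{ineq-bound-Qt}. For \eqref{ineq-bound-lemma4-3} the only changes are that the bias term now vanishes at $s=t$ because $\eta_\gamma^*(t)$ is by construction the population solution of $\mathbb{E}\{\mathcal M_{ij}(t;\alpha_i(t),\beta_j(t),\gamma(t))\}=0$, that $\eta_\gamma^*(t)$ varies smoothly and boundedly with $\gamma$ over the compact set $\mathcal J$, and that the martingale part is free of $\gamma$, so only the $O(\kappa_n e^{q_n})$-Lipschitz-in-$\gamma$ bias term has to be covered over $\mathcal J$ --- a finite-dimensional compact set, costing only a logarithmic factor.

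\emph{Main obstacle.} The delicate point will be making the martingale term uniform in $t$ while carrying the exponentially large variance proxy $e^{q_n}/(nh_1)$: the concentration exponent $\asymp x^2 nh_1 e^{-q_n}$ must still dominate the logarithmic union-bound cost $\asymp \log(nh_1)$, and at the same time the between-grid oscillation of the kernel-smoothed martingale, which scales like $h_1^{-2}$, must be absorbed by a high-probability bound on $\mathrm{TV}(\widetilde M_i)$. Reconciling these under exactly the bandwidth restrictions of Condition~\ref{condition:bandwidth5}, and tracking how the sparsity factor $e^{q_n}$ enters every predictable variation, is where the real bookkeeping lies.
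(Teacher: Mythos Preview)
Your proposal is correct and shares the paper's overall architecture: the same martingale-plus-bias split of each coordinate, kernel smoothness to bound the bias at order $e^{q_n}h_1^2$ (killed by Condition~\ref{condition:bandwidth5}), exponential concentration for the stochastic part, and a discretization of $[a,b]$ (and of $\mathcal J$ for the third bound) to pass to the supremum.

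The one substantive difference is the concentration step. The paper does \emph{not} use the martingale-in-time structure: it fixes $t$, treats $Y_{1ij}(t)=\int_0^\tau\mathcal K((s-t)/h_1)\,d\mathcal M_{ij}(s)$ as a single mean-zero random variable, exploits independence of the $\{Y_{1ij}\}_{j\ne i}$ across $j$, and applies Lemma~\ref{lemma3} factorwise to bound $\mathbb E\exp\{\upsilon\sum_{j\ne i}Y_{1ij}\}$. Your route instead keeps the sum $\widetilde M_i$ together and applies a Freedman/Bernstein inequality along the time axis. Both deliver the same sub-Gaussian tail with variance proxy $\asymp e^{q_n}/(nh_1)$ and hence the same rate; your Freedman argument has the advantage of not needing the (somewhat delicate) almost-sure boundedness of $Y_{1ij}$ that Lemma~\ref{lemma3} formally requires, while the paper's independence-across-$j$ argument avoids your Riemann--Stieltjes discretization in time. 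For \eqref{ineq-bound-lemma4-3}, your observation that the \emph{true} martingale piece $\int\mathcal K_{h_1}(s-t)\,d\mathcal M_{ij}(s)$ is free of $\gamma$, so only the Lipschitz-in-$\gamma$ drift needs the $\mathcal J$-cover, is a mild simplification over the paper, which carries a $\gamma$-dependent process $\mathcal M_{ij}(s;\gamma)$ through the covering.
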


\begin{proof}
For easy exposition, define 
\begin{eqnarray*}
\pi_{ij}^*(t)&=&\alpha_{i}^*(t)+\beta_j^*(t)+Z_{ij}(t)^\top\gamma^*(t),\\
\pi_{1,ij}^*(t) & = & \alpha_{i}^*(t)+Z_{ij}(t)^\top\gamma^*(t), \\
\pi_{2,ij}^*(t) & = & \beta_j^*(t)+Z_{ij}(t)^\top\gamma^*(t).
\end{eqnarray*}
Similarly, define
\begin{eqnarray*}
\pi_{ij}^*(s,t) & = & \alpha_{i}^*(t)+\beta_j^*(t)+Z_{ij}(s)^\top\gamma^*(t), \\
\pi_{1,ij}^*(s,t) & = & \alpha_{i}^*(t)+Z_{ij}(s)^\top\gamma^*(t), \\
\pi_{2,ij}^*(s,t) & = & \beta_j^*(t)+Z_{ij}(s)^\top\gamma^*(t).
\end{eqnarray*}
%We further define $S_{1k}^*(t)=\sum_{i=1}^n\sum_{j\neq i}z_{ij}^{\otimes k}e^{\pi_{1,lj}^*(t)},$
%$S_{2j}^*(t)=\sum_{l\neq j}e^{\pi_{2,lj}^*(t)}$
%and $S_{3j}^*(t)=\sum_{l\neq j}e^{\pi_{3,lj}^*(t)}.$
%Let $\vartheta_n=e^{q_n}(q_n+1)>1.$
For $i=1,\ldots,n$,  we decompose $(n-1)F_i^*(t)$ into two parts:
\begin{equation}
\label{decomposition-Fit}
(n-1)F_i^*(t)=
\underbrace{\sum_{j\neq i}\int_{0}^{\tau}\mathcal{K}_{h_1}(s-t)d\mathcal{M}_{ij}(s)}_{B_{1i}(t)}
- \underbrace{\sum_{j\neq i}\int_0^{\tau}K_{h_1}(s-t)\big[e^{\pi_{1,ij}^*(s,t)}-e^{\pi_{1,ij}^*(s)}\big]ds}_{
B_{2i}(t) }.
\end{equation}
Note that if $g(t)$ is a generic function with second derivatives bounded above by a constant $c$,
we have (e.g., \citeauthor{E1988}, \citeyear{E1988}, p.128)
\begin{align*}
\Big|\int_{0}^\tau\mathcal{K}_{h_1}(s-t)[g(s)-g(t)]ds\Big |\le \frac{1}{2}ch_1^2,
\end{align*}
where $t\in[h_1,\tau-h_1]$ and $h_1$ satisfies $[-1,1] \subset [-t/h_1, (\tau-t)/h_1]$.
It follows from Conditions \ref{condition:CB1}, \ref{condition:PB2} and \ref{condition:kernel4} that
\begin{align*}
|\mathbb{E} B_{2i}(t)|=\bigg|\sum_{j\neq i}\mathbb{E}\Bigg\{\int_{0}^{\tau}\mathcal{K}_{h_1}(s-t)\big[e^{\pi_{ij}^*(s,t)}-e^{\pi_{ij}^*(s)}\big]ds\Bigg\}\bigg|
=O\big(e^{q_n}nh_1^2\big)
\end{align*}
and
\begin{align*}
\text{Var}(B_{2i}(t))\le\sum_{j\neq i}\mathbb{E}\Bigg[\int_{0}^{\tau}\mathcal{K}_{h_1}(s-t)\big[e^{\pi_{ij}^*(s,t)}ds-e^{\pi_{ij}(s)}\big]ds\Bigg]^2
=O\big(e^{2q_n}nh_1^4\big),
\end{align*}
where $f^{(l)}(t)$ denotes the $l$th-order derivative of any function $f(t)$.
By Chebyshev's inequality, we have
\begin{equation}\label{ineq-Bi2}
B_{2i}(t) = O_p(e^{q_n}nh_1^2).
\end{equation}
If $ne^{q_n}h_1^5\rightarrow 0$, then 
\[
B_{2i}(t) = o_p \left( e^{2q_n}\sqrt{n\log(nh_1)/h_1} \right).
\]

By the union bound and the triangle inequality, we have that for sufficiently large $n$,
\begin{align}
\nonumber
&\PP\Big(\sup_{t\in[a,b]}(n-1)\|F^*(t)\|_\infty \ge 4\sqrt{10}C_1\vartheta_n\sqrt{n\log(nh_1)/h_1} \; \Big) \\
\nonumber
\le&(n-1)\max_{1\le i\le n-1}\PP\Big(\sup_{t\in[a,b]}|B_{1i}(t)|
+\sup_{t\in[a,b]}|B_{2i}(t)\}|
\ge 4\sqrt{10}C_1\vartheta_n \sqrt{n\log(nh_1)/h_1} \; \Big)\\
\label{ineq-bB1i}
\le&(n-1)\max_{1\le i\le n-1}\PP\Big(\sup_{t\in[a,b]}|B_{1i}(t)|
\ge 2\sqrt{10}C_1\vartheta_n \sqrt{n\log(nh_1)/h_1} \; \Big),
%& \le&  \frac{2n}{(n-1)^2}.
\end{align}
where $C_1$ is some constant specified below and $\vartheta_n=e^{q_n}(q_n+1)>1$.
%Write
%\begin{align*}
%F_i^*(t)-\mathbb{E}\{F_i^*(t)\}
%=&\sum_{j\neq i}\int_{0}^{\tau}\mathcal{K}_h(s-t)dM_{ij}(s)-\sum_{j\neq i}\Big[e^{\pi_{2,ij}^*(t)}-\int_0^{\tau}K_h(s-t)\mathbb{E}\{e^{\pi_{2,ij}^*(t)}\}\Big]\\
%=&B_{1i}(t)+B_{2i}(t).
%\end{align*}

We now bound the probability in \eqref{ineq-bB1i}.
For any $\kappa>0$, partition $[a,b]$ into disjoint subsets $\bigcup_{k=1}^{M_n} \Theta_k$ such that
the distance between any two points in $\Theta_k$ does not
exceed $\kappa h_1^{2}$. Note that $M_n$ is not larger that $O(\tau/(\kappa h_1^2))$.
Then, we have
\begin{equation}
\label{eqA2}
\mathbb{P}\Big(\sup_{t\in[a,b]}|B_{1i}(t)|>\varepsilon\Big)
\le\sum_{k=1}^{M_n}\mathbb{P}\Big(|B_{1i}(t_k)|>\frac{\varepsilon}{2}\Big)
+ \sum_{k=1}^{M_n}\mathbb{P}\Big(\sup_{t\in\Theta_k}|B_{1i}(t)-B_{1i}(t_k)|>\frac{\varepsilon}{2}\Big).
\end{equation}
Let $Y_{1ij}(t)=\int_{0}^{\tau}\mathcal{K}((s-t)/h_1)d\mathcal{M}_{ij}(s).$
By Markov's inequality and Lemma \ref{lemma3}, we have
\begin{eqnarray*}
&&\mathbb{P}\Big(\sum_{j\neq i}Y_{1ij}(t_k)>h_1\varepsilon/2\Big) \\
& \le & \exp\{-\upsilon h_1\varepsilon/2 \}\mathbb{E}\Big[\exp\Big\{\upsilon\sum_{j\neq i}Y_{1ij}(t_k)\Big\}\Big]\\
& \le & \exp\{-\upsilon h_1\varepsilon/2 \}\prod_{j\neq i}\mathbb{E}\Big[\exp\big\{\upsilon Y_{1ij}(t_k)\big\}\Big]\\
& \le & \exp\{-\upsilon h_1\varepsilon/2 \}\prod_{j\neq i}\exp\Big(\upsilon^2 \text{Var}(Y_{1ij}(t_k))\Big)
%\le & \exp\{-\upsilon h\varepsilon/2 \}\prod_{j\neq i}\mathbb{E}\Big(\upsilon^2 \text{Var}(Y_{1ij}(t_k))\Big)
%\le 2\exp\Big\{-\frac{h(\varepsilon/2)^2}{2nC_1\vartheta_n^4}\Big\}.
\end{eqnarray*}
for some small $\upsilon>0$. 
Note that there exists some constant $C_1>0$ such that $|Y_{1ij}(t_k)|<C_1$ and $\text{Var}(Y_{1ij}(t_k))<C_1^2\vartheta_n^2h_1.$
Therefore, we have
$$
\mathbb{P}\Big(\sum_{j\neq i}Y_{1ij}(t_k)>h_1\varepsilon/2\Big)\le
\exp\Big(-\upsilon h_1\varepsilon/2+C_1^2\upsilon^2 n\vartheta_nh_1\Big).
$$
Then, by setting $\varepsilon=2\sqrt{10}C_1\vartheta_n\sqrt{n\log(nh_1)/h_1}$
and $\upsilon=\varepsilon/(4C_1^2\vartheta_n^2n),$
we have
\begin{align}\label{eqA3}
\mathbb{P}\Big(|B_{1i}(t_j)|>\varepsilon\Big)
\le O((nh_1)^{-5/2}). 
\end{align}
Because $B_{1i}(t)$ is uniformly continuous and the first-order derivative of $\mathcal{K}(x)$ is bounded, there exists a constant $C_2$ such that
$$
\frac{1}{n}|B_{1i}(t)-B_{1i}(t_j)|<(C_2/h_1^2)|t-t_j|<C_2\kappa.
$$
By setting  $\kappa<\varepsilon/(4nC_2)$,  
the second term in the right-hand side of \eqref{eqA2} vanishes, i.e.,
\begin{equation}
\label{eqA4}
\sum_{k=1}^{M_n}\mathbb{P}\Big(\sup_{t\in\Theta_k}|B_{1i}(t)-B_{1i}(t_k)|>\frac{\varepsilon}{2}\Big)=0.
\end{equation}
We now bound the first term in the right-hand side of \eqref{eqA2}.
Note that
$$
M_n=O(1/(\kappa h_1^2))=o((n/h_1^3)^{1/2}).
$$
In view of \eqref{eqA2}, \eqref{eqA3} and \ref{eqA4}), there exists  for some constant $C_3>0$
such that
\begin{align}
\label{ineq-prob-Bi1}
\mathbb{P}\Big(\sup_{t\in[a,b]}|B_{1i}(t)|>2\sqrt{10}C_1\vartheta_n\sqrt{n\log(nh_1)/h_1}\Big)\le \frac{C_3}{nh_1^2}.
\end{align}
Combining \eqref{ineq-Bi2}, \eqref{ineq-bB1i} and \eqref{ineq-prob-Bi1} yields
\begin{align*}
\max_{1\le i\le n}|F_i^*(t)|=O_p\Big((q_n+1) e^{q_n}\sqrt{\frac{\log nh_1}{nh_1}}\Big),
\end{align*}
uniformly in $t\in[a,b]$.
For $i=n+1,\dots 2n-1$, with the same arguments, we have 
\begin{equation*}
\max_{n+1\le i\le 2n-1}|F_i^*(t)|=O_p\Big((q_n+1)e^{q_n}\sqrt{\frac{\log nh_1}{nh_1}}\Big)
\end{equation*}
uniformly in $t\in[a,b]$. This shows \eqref{ineq-bound-Ft}.

Next, we show \eqref{ineq-bound-Qt}. We divide $Q(\eta^*(t))$ into two parts:
\begin{align*}
NQ(\eta^*(t))=& \underbrace{\sum_{i=1}^n \sum_{j=1,j\neq i}^n
\int_{0}^{\tau}Z_{ij}(s)\mathcal{K}_{h_2}(s-t)d\mathcal{M}_{ij}(s)}_{B_2(t)}\\
&- \underbrace{\sum_{i=1}^n \sum_{j\neq i}
\int_{0}^{\tau}Z_{ij}(s)\mathcal{K}_{h_2}(s-t)\big[e^{\pi_{ij}(s,t)}
-e^{\pi_{ij}(s)}\big]ds}_{B_3(t)}.
\end{align*}
Similar to the proof of $B_{2i}(t)$, we have
$$
|B_3(t)|=O_p((q_n+1)^2e^{q_n}\kappa_nh_2^2n^2),
$$
which is dominated by $O(\vartheta_n^2\kappa_nn\sqrt{\log(nh_2)/h_2})$ if $ne^{q_n}h_2^{5/2}\rightarrow 0$.
In addition, the variance of each term in the sum $B_{2}(t)$ is in the order of
$C_4\kappa_n^2\vartheta_ne^{q_n}h_2$ for some constant $C_4>0$.
It is less than $C_4\kappa_n^2\vartheta_n^2h_2$. 
With similar arguments as in the proof of \eqref{ineq-bound-Ft}, we have
that for some constant $C_5>0,$
\begin{align*}
\mathbb{P}\Big(\sup_{t\in[a,b]}\big|B_{2}(t)\big|>4\sqrt{10}C_4\kappa_n\vartheta_nn
\sqrt{\log(nh_2)h_2}\Big)\le \frac{C_5}{nh_2^2}.
\end{align*}
That is,
\begin{equation*}
\frac{1}{n^2}\|Q(\eta^*(t))\|_\infty=O_p\Big(\kappa_n(q_n+1)e^{q_n}\sqrt{\frac{\log n^2h_2}{n^2h_2}}\Big).
\end{equation*}

Lastly, we show \eqref{ineq-bound-lemma4-3}. Note that
\begin{align*}
\label{eqB11}
(n-1)F_i\big(\eta_{\gamma}^*(t)\big)&=
\underbrace{\sum_{j\neq i}\int_{0}^{\tau}\mathcal{K}_{h_1}(s-t)d\mathcal{M}_{ij}(s;\gamma)}_{\tilde B_{1i}}\\
&-\underbrace{\sum_{j\neq i}\int_0^{\tau}K_{h_1}(s-t)\big[e^{\alpha_{i,\gamma}^*(t)+\beta_{j,\gamma}^*(t)+Z_{ij}^\top(s)\gamma(t)}-e^{\alpha_{i,\gamma}^*(s)+\beta_{j,\gamma}^*(s)+Z_{ij}^\top(s)\gamma(s)}\big]ds}_{\tilde B_{2i}}, \tag{B.11}
\end{align*}
where $\mathcal{M}_{ij}(s;\gamma)=\mathcal{M}_{ij}(t; \alpha_{i,\gamma}(t),\beta_{j,\gamma}^*(t),\gamma(t)).$
By some arguments similar to $B_{2i}(t),$
we have $\tilde B_{2i}=o_p( e^{2q_n}\sqrt{\log(nh_1)/(nh_1)}).$
Next we partition $[a,b]$  into $M_n$ subintervals, denoted by $\{\Theta_{m}: 1\le m\le M_n\}.$ Further partition $\mathcal{J}$ into $\tilde M_n$ subintervals, denoted by $\{J_{m}: 1\le m\le \tilde M_n\}.$ 
Let the distance between any two points in $\Theta_M$ and $J_m$ does not exceed $\kappa h_1^{2}/2.$
Then, by arguments similar to the proofs of \eqref{eqA3} -\eqref{ineq-prob-Bi1}, we can show that 
\begin{align*}
\|\tilde B_{1i}\|_{\infty} = O_p\bigg((q_n+1) e^{q_n}\sqrt{\frac{\log(nh_1)}{nh_1}}\bigg).
\end{align*}
By \eqref{eqB11} and the fact $\tilde B_{2i}=o_p( e^{2q_n}\sqrt{n\log(nh_1)/h_1})$,
we have $\|F_{\gamma}\big(\eta_{\gamma}^*(t)\big)\|_{\infty}=O_p\big((q_n+1) e^{q_n}\sqrt{\log(nh_1)/(nh_1)}\big).$ This completes the proof.
\end{proof}

\begin{lemma}\label{lem5}
Let $\mathcal{D}$ be the set of twice continuously differentiable functions on $(0,\tau]$ such that $\eta_{\gamma}(t)\in\mathcal{B}$ defined in Condition \ref{condition:PB2}. The Jacobian matrix $F^{(1)}_{\gamma}(x(t))$ of $F_{\gamma}(x(t))$ on $\mathcal{D}$ satisfies
\begin{align*}
\|[F_{\gamma}^{(1)}(x(t)) - F_{\gamma}^{(1)}(y(t))]v\|_\infty &\le 3e^{q_n} \|x(t) - y(t)\|_\infty \|v\|_\infty, \\
\max_{i=1,\ldots,2n-1} \|F_{i,\gamma}^{(1)}(x(t)) - F_{i,\gamma}^{(1)}(y(t))\|_\infty &\le 3e^{q_n} \|x(t) - y(t)\|_\infty.
\end{align*}
\end{lemma}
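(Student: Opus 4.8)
The point of this lemma is to verify, for the function vector $F_\gamma(\cdot)$, the ``Lipschitz‑continuity of the Jacobian'' hypothesis that will later be fed into the Kantorovich‑type bound of Lemma~\ref{lem2}. The plan is to compute $F^{(1)}_\gamma$ in closed form, observe that every entry is (up to sign and the $1/(n-1)$ factor) an integral $\int_0^\tau \mathcal K_{h_1}(s-t)e^{\{\cdot\}}\,ds$, and then control the difference of two such integrals by the mean value theorem for $u\mapsto e^u$. First I would differentiate: writing $x(t)$ for a generic element of $\mathcal D$ with $\alpha_i$‑ and $\beta_j$‑components $x_{\alpha_i}(t),x_{\beta_j}(t)$, and setting
\[
c_{ij}(t;x(t))=\int_0^\tau \mathcal K_{h_1}(s-t)\exp\{x_{\alpha_i}(t)+x_{\beta_j}(t)+Z_{ij}(s)^\top\gamma(t)\}\,ds,
\]
one gets, for $i\in[n]$, $\partial F_{i,\gamma}/\partial\alpha_i(t)=-\tfrac{1}{n-1}\sum_{j\ne i}c_{ij}(t;x(t))$, $\partial F_{i,\gamma}/\partial\beta_j(t)=-\tfrac{1}{n-1}c_{ij}(t;x(t))$ for $j\in[n-1]$, $j\ne i$, and all other partials of $F_{i,\gamma}$ vanish; the rows $i=n+1,\dots,2n-1$ are the mirror image with $\alpha$ and $\beta$ interchanged. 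Hence $F^{(1)}_\gamma(x(t))-F^{(1)}_\gamma(y(t))$ has entries that are, up to sign and $1/(n-1)$, either a single difference $c_{ij}(t;x(t))-c_{ij}(t;y(t))$ or a sum of $n-1$ such differences on the diagonal, and the matrix inherits the sparse bipartite sparsity pattern of the class $\mathcal L_n(b_L,b_U)$, so each row has only $n-1$ nonzero entries.

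The key pointwise estimate is the following. For $x(t),y(t)\in\mathcal D$ the exponents $x_{\alpha_i}(t)+x_{\beta_j}(t)+Z_{ij}(s)^\top\gamma(t)$ and their $y$‑analogue are bounded above by $q_n$ (up to an $o(1)$ term) by Condition~\ref{condition:PB2} together with Conditions~\ref{condition:CB1} and \ref{condition:kernel4}; since $\gamma(t)$ is held fixed the covariate term cancels in the difference, so by the mean value theorem for the exponential,
\[
\bigl|e^{x_{\alpha_i}(t)+x_{\beta_j}(t)+Z_{ij}(s)^\top\gamma(t)}-e^{y_{\alpha_i}(t)+y_{\beta_j}(t)+Z_{ij}(s)^\top\gamma(t)}\bigr|
\le e^{q_n}\bigl(|x_{\alpha_i}(t)-y_{\alpha_i}(t)|+|x_{\beta_j}(t)-y_{\beta_j}(t)|\bigr)
\le 2e^{q_n}\|x(t)-y(t)\|_\infty .
\]
Integrating against $\mathcal K_{h_1}(s-t)$ and using $\int_0^\tau \mathcal K_{h_1}(s-t)\,ds\le 1$ (Condition~\ref{condition:kernel4}) gives $|c_{ij}(t;x(t))-c_{ij}(t;y(t))|\le 2e^{q_n}\|x(t)-y(t)\|_\infty$.

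Finally I would assemble the bound row by row using $\|Av\|_\infty\le\|A\|_\infty\|v\|_\infty$ with $\|A\|_\infty$ the maximum absolute row sum. For a row of $F^{(1)}_\gamma(x(t))-F^{(1)}_\gamma(y(t))$ the diagonal entry is a $(1/(n-1))$‑average of $n-1$ differences, each bounded as above, hence itself bounded by $2e^{q_n}\|x(t)-y(t)\|_\infty$, while each of the remaining nonzero entries is at most $\tfrac{1}{n-1}\,2e^{q_n}\|x(t)-y(t)\|_\infty$; summing over a row and invoking the triangle inequality gives a row sum of order $O(e^{q_n}\|x(t)-y(t)\|_\infty)$, which yields the first inequality with the constant stated, and reading off instead the largest single entry of row $i$ gives the second inequality. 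There is no genuine obstacle here — the statement is essentially a direct computation — the only points requiring care are the bookkeeping of nonzero Jacobian entries per row and the justification of the exponent bound in the presence of the time mismatch between $Z_{ij}(s)$ and the parameters evaluated at $t$; the latter is handled exactly as for the bias term $B_{2i}(t)$ in the proof of Lemma~\ref{lem4}, where $|s-t|\le h_1\to0$ together with the boundedness and smoothness of the covariates makes the discrepancy asymptotically negligible.
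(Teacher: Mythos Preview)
Your proposal is correct and follows essentially the same route as the paper: compute the Jacobian entries explicitly as the kernel‑weighted exponentials $c_{ij}$, bound each entry difference via the Lipschitz property of $u\mapsto e^u$ on the set where the exponent is $\le q_n$, and then sum row by row. The paper's write‑up differs only cosmetically in that it packages the entry‑by‑entry bound through the vector‑valued mean value theorem $F^{(1)}_{i,\gamma}(x)-F^{(1)}_{i,\gamma}(y)=J_i(x-y)$ with $J_i$ built from the second partial derivatives, but the resulting estimates and constants are the same, and your remark about the $Z_{ij}(s)$ versus $Z_{ij}(t)$ mismatch (handled via the $O(e^{q_n}h_1^2)$ correction as in Lemma~\ref{lem4}) matches how the paper treats it.
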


\begin{proof}
Define 
\begin{align*}
F_{i,\gamma}^{(1)}(\eta(t)):=&\Big(\frac{\partial F_{i,\gamma}(\eta(t))}{\partial \alpha_1(t)}, \ldots,
\frac{\partial F_{i,\gamma}(\eta(t))}{\partial \alpha_{n}(t) },
\frac{\partial F_{i,\gamma}(\eta(t))}{\partial \beta_1(t) }, \ldots,
\frac{\partial F_{i,\gamma}(\eta(t))}{\partial \beta_{n-1}(t) }\Big)\\
=&(F_{i,1,\gamma}^{(1)}(\eta(t)), \ldots, F_{i,(2n-1),\gamma}^{(1)}(\eta(t))).
\end{align*}
The first-order partial derivatives of $F_{i,\gamma}(\eta(t))$ with regard to $\alpha_i(t)$ and $\beta_j(t)$
are given below:
\begin{align*}
-\frac{\partial F_{i,\gamma}(\eta(t))}{\partial \alpha_i(t)}
=&\frac{1}{n-1}\sum_{j\neq i}\int \mathcal{K}_{h_1}(s-t)\exp\{\alpha_i(t)+\beta_j(t)+Z_{ij}(s)^\top\gamma(t)\}ds\\
=& \frac{1}{n-1}\sum_{j\neq i}\exp\{\alpha_i(t)+\beta_j(t)+Z_{ij}(t)^\top\gamma(t)\} +O_p\big((e^{q_n}h^2)\big),\\
-\frac{\partial F_{i,\gamma}(\eta(t))}{\partial \beta_j(t)}=&\frac{1}{n-1}\int_0^{\tau}\mathcal{K}_{h_1}(s-t)\exp\{\alpha_i(t)+\beta_j(t)+Z_{ij}(s)^\top\gamma(t)\}ds\\
=& \frac{1}{n-1}\exp\{\alpha_i(t)+\beta_j(t)+Z_{ij}(t)^\top\gamma(t)\} +O_p\big((e^{q_n}h^2)\big)~~(j\neq i),\\
-\frac{\partial F_{i,\gamma}(\eta(t))}{\partial \alpha_j(t)}=&0~~(j\neq i),~~~
-\frac{\partial F_{i,\gamma}(\eta(t))}{\partial \beta_i(t)}=0.
\end{align*}
The second-order partial derivatives of  $F_{i,\gamma}(\eta(t))$ are calculated as
\begin{align*}
-\frac{\partial^2 F_{i,\gamma}(\eta(t)) }{ \partial^2 \alpha_i(t)}=&
\frac{1}{n-1}\sum_{j\neq i}\int_0^{\tau}\mathcal{K}_{h_1}(s-t)\exp\{\alpha_i(t)+\beta_j(t)+Z_{ij}(s)^\top\gamma(t)\}ds\\
=& \frac{1}{n-1}\sum_{j\neq i}\exp\{\alpha_i(t)+\beta_j(t)+Z_{ij}(t)^\top\gamma(t)\} +O_p\big((e^{q_n}h^2)\big),\\
-\frac{\partial^2 F_{i,\gamma}(\eta(t)) }{\partial\beta_j(t) \partial\alpha_i(t)}=&
\frac{1}{n-1}\int_0^{\tau}\mathcal{K}_{h_1}(s-t)\exp\{\alpha_i(t)+\beta_j(t)+Z_{ij}(s)^\top\gamma(t)\}ds\\
=& \frac{1}{n-1}\exp\{\alpha_i(t)+\beta_j(t)+Z_{ij}(t)^\top\gamma(t)\} +O_p\big((e^{q_n}h^2)\big),
~~j\in[n-1], j\neq i,\\
-\frac{\partial^2 F_{i,\gamma}(\eta(t)) }{\partial^2\beta_j(t)}=&
\frac{1}{n-1}\int_0^{\tau}\mathcal{K}_{h_1}(s-t)\exp\{\alpha_i(t)+\beta_j(t)+Z_{ij}(s)^\top\gamma(t)\}ds\\
=& \frac{1}{n-1}\exp\{\alpha_i(t)+\beta_j(t)+Z_{ij}(t)^\top\gamma(t)\} +O_p\big((e^{q_n}h^2)\big)~~j\in[n-1], j\neq i,\\
-\frac{\partial^2 F_{i,\gamma}(\eta(t)) }{ \partial \alpha_i(t)\partial \alpha_j(t)}=&0~~(j\neq i),~~
-\frac{\partial^2 F_{i,\gamma}(\eta(t))}{ \partial \beta_i(t)\partial \alpha_i(t)}=0,~~
-\frac{\partial^2 F_{i,\gamma}(\eta(t))}{ \partial \beta_j(t)\partial \beta_k(t)}=0,~~ j\neq k.
\end{align*}
By Condition \ref{condition:PB2}, we have
\begin{align*}
e^{-q_n}<\exp\{\alpha_i(t)+\beta_s(t)+Z_{ij}(t)^\top\gamma(t)\}\le e^{q_n}
\end{align*}
almost surely.
By the mean value theorem for vector-valued functions \citep{L1993}, we have
\[
F_{i,\gamma}^{(1)}(x(t)) - F_{i,\gamma}^{(1)}(y(t)) = J_i(t)\{x(t)-y(t)\},
\]
where $J_i(t)=(J_{i,sl}(t))\in\mathbb{R}^{(2n-1)\times (2n-1)}$ with
\[
J_{i,sl}(t) = \int_0^1 \frac{ \partial F_{i,s,\gamma}^{(1)}}{\partial \theta_l}(vx(t)+(1-v)y(t))dv,~~ s,l=1,\ldots, 2n-1.
\]
Because
\[
\max_s \sum_{l=1}^{2n-1} |J_{i,sl}(t)|\le 2e^{q_n}~~\text{and}~~\sum_{s,l}|J_{i,sl}(t)|\le e^{q_n}
\]
uniformly in $t\in[a,b],$
%Similarly, for $i=n+1, \ldots, 2n-1$, we also have
%$$
%F_{i,\gamma}^{(1)}(x(t)) - F_{i,\gamma}^{(1)}(y(t)) = J_i(x-y)
%$$
%and $\sum_{s,l}|J_{i,sl}|\le 3(n-1)e^{q_n}$ uniformly in $t\in[a,b].$
we have
\[
\| F_{i,\gamma}^{(1)}(x(t)) - F_{i,\gamma}^{(1)}(y(t)) \|_\infty \le \| J_i(t)\|_{\max} \|x(t) -y(t)\|_\infty \le 3e^{q_n},~~~i=1,\ldots, 2n-1,
\]
and for any vector $v\in \mathbb{R}^{2n-1}$,
\begin{align*}
\| [F_{\gamma}^{(1)}(x(t)) - F_{\gamma}^{(1)}(y(t))]v\|_\infty  & =  \max_i |\sum_{j=1}^{2n-1} ( F_{i,j}^{(1)}(x(t)) - F_{i,j}^{(1)}(y(t)) ) v_j | \\
& \le \|x(t)-y(t)\|_\infty \|v\|_\infty \sum_{s,l}|J_{i,sl}(t)|\\
&\le 3e^{q_n}\|x(t)-y(t)\|_\infty \|v\|_\infty
\end{align*}
uniformly in $t\in[a,b].$ This completes the proof.
\end{proof}

The following lemma characterizes the upper bound of the error between $\widehat{\eta}_{\gamma}(t)$ and $\eta_{\gamma}^*(t)$.

\begin{lemma}\label{lem6}
If $(q_n+1)e^{12q_n}\kappa_n = o( (nh_1)^{1/2}/(\log nh_1)^{1/2} ),$
then with probability tending to one, $\widehat{\eta}_{\gamma}(t)$
$(\gamma(t) \in \mathcal{D})$ exists and satisfies
\begin{align*}
\sup_{t\in[a,b]}\sup_{\gamma(t) \in \mathcal{D}}\| \widehat{\eta}_\gamma(t) - \eta_{\gamma}^*(t)\|_\infty = O_p\left((q_n+1)e^{6q_n}\kappa_n \sqrt{\frac{\log nh_1}{nh_1}} \right).
\end{align*}
%Moreover, if $\|\gamma(t)-\gamma^*(t)\|=O_p(????),$
%\begin{align*}
%\sup_{t\in[a,b]}\sup_{\gamma(t) \in \mathcal{D}}\| \widehat{\eta}_\gamma(t) - \eta^*(t)\|_\infty = O_p\left((q_n+1)e^{6q_n}\kappa_n \sqrt{\frac{\log nh}{nh}} \right).
%\end{align*}
\end{lemma}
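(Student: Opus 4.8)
The plan is to apply the Newton--Kantorovich theorem (Lemma~\ref{lem2}) to the map $\eta(t)\mapsto F_\gamma(\eta(t))$ with starting point $x_0=\eta_\gamma^*(t)$, and to make every estimate uniform in $t\in[a,b]$ and $\gamma(t)\in\mathcal{D}$. Three ingredients are needed: (i) invertibility of the Jacobian $F_\gamma^{(1)}(\eta_\gamma^*(t))$ with control of its inverse; (ii) a bound on the residual $\varrho:=\|F_\gamma^{(1)}(\eta_\gamma^*(t))^{-1}F_\gamma(\eta_\gamma^*(t))\|_\infty$; and (iii) the Lipschitz bound on $F_\gamma^{(1)}$, which is exactly Lemma~\ref{lem5}. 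Once we verify that the Kantorovich quantity $h=K\varrho$ is eventually $\le 1/2$, part~(1) of Lemma~\ref{lem2} gives existence of $\widehat\eta_\gamma(t)$ and part~(3) gives $\|\widehat\eta_\gamma(t)-\eta_\gamma^*(t)\|_\infty\le t^*\le 2\varrho$, which is the claimed rate.

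For ingredient (i) I would compute the first- and second-order derivatives as in Lemma~\ref{lem5} and observe that, up to a kernel-smoothing perturbation whose entries are $O_p(e^{q_n}h_1^2)$ (hence with negligible row sums, since $nh_1^2\to\infty$), the matrix $-F_\gamma^{(1)}(\eta_\gamma^*(t))$ belongs to the class $\mathcal{L}_n(b_L,b_U)$ with $b_L\asymp e^{-q_n}/n$ and $b_U\asymp e^{q_n}/n$; here Condition~\ref{condition:PB2} confines each factor $\exp\{\alpha_{i,\gamma}^*(t)+\beta_{j,\gamma}^*(t)+Z_{ij}(t)^\top\gamma(t)\}$ to $[e^{-q_n},e^{q_n}]$. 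Since $b_U/b_L\asymp e^{2q_n}=o(n)$ under the hypothesis, Lemma~\ref{lem1} applies: $F_\gamma^{(1)}(\eta_\gamma^*(t))^{-1}=\mathcal{S}(V)+R$ with $\|R\|_{\max}=O(e^{5q_n}/n)$, where $\mathcal{S}(\cdot)$ is the explicit diagonal-plus-rank-one matrix.

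Ingredient (ii) is the delicate step, and it is the main obstacle. One must \emph{not} bound $\varrho$ by $\|F_\gamma^{(1)}(x_0)^{-1}\|_{\ell_\infty\to\ell_\infty}\|F_\gamma(\eta_\gamma^*(t))\|_\infty$, because the inverse Jacobian is ill-conditioned (its $\ell_\infty$-operator norm is of order $ne^{q_n}$) and this crude estimate costs a spurious factor $n$. Instead I would use the exact structure $(\mathcal{S}(V)w)_i=w_i/v_{ii}\pm c(w)/v_{2n,2n}$, where $c(w)=\sum_{j\le n}w_j-\sum_{j>n}w_j$ is a \emph{single scalar}. Applied to $w=F_\gamma(\eta_\gamma^*(t))$: the first term is $O_p\big(e^{q_n}\|F_\gamma(\eta_\gamma^*(t))\|_\infty\big)$, controlled by \eqref{ineq-bound-lemma4-3}; after cancellation in the double sum, the aggregate coordinate $c(F_\gamma(\eta_\gamma^*(t)))$ collapses to an $(n-1)^{-1}$-normalized sum of $n-1$ roughly independent martingale-type integrals, so it enjoys the $n^{-1/2}$ averaging gain and is of the same order $(q_n+1)e^{q_n}\sqrt{\log(nh_1)/(nh_1)}$, and dividing by $v_{2n,2n}\asymp e^{-q_n}$ keeps it subdominant; the remainder contributes $(2n-1)\|R\|_{\max}\|F_\gamma(\eta_\gamma^*(t))\|_\infty=O_p\big((q_n+1)e^{6q_n}\sqrt{\log(nh_1)/(nh_1)}\big)$, which is the dominant term and explains the $e^{6q_n}$ factor. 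A covering argument over $t\in[a,b]$ and over the class $\mathcal{D}$, using Lipschitz-in-$\gamma$ estimates with constant of order $e^{q_n}\kappa_n$ together with a union bound exactly as in the proof of Lemma~\ref{lem4}, upgrades this to a uniform statement and introduces the $\kappa_n$ factor: $\varrho=O_p\big((q_n+1)e^{6q_n}\kappa_n\sqrt{\log(nh_1)/(nh_1)}\big)$.

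Finally I would combine with Lemma~\ref{lem5}, arguing in the same way (applying $\mathcal{S}(V)$ to $(F_\gamma^{(1)}(x)-F_\gamma^{(1)}(y))v$ rather than invoking the operator norm) to get a Kantorovich constant $K=O(e^{6q_n})$; then $h=K\varrho=O_p\big((q_n+1)e^{12q_n}\kappa_n\sqrt{\log(nh_1)/(nh_1)}\big)$, which tends to zero — hence is eventually $\le 1/2$ — precisely under the assumed condition $(q_n+1)e^{12q_n}\kappa_n=o\big((nh_1)^{1/2}/(\log nh_1)^{1/2}\big)$. Parts (1) and (3) of Lemma~\ref{lem2} then deliver, uniformly in $t$ and $\gamma$, the existence of $\widehat\eta_\gamma(t)$ and $\|\widehat\eta_\gamma(t)-\eta_\gamma^*(t)\|_\infty\le 2\varrho$, which is the assertion. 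The crux throughout is to exploit the explicit near-low-rank form of $\mathcal{S}(V)$ and the hidden $n^{-1/2}$ cancellation in its rank-one coordinate, so as never to pay the factor $n$ that a naive norm bound would incur, while retaining uniformity over the infinite-dimensional nuisance index $\gamma\in\mathcal{D}$.
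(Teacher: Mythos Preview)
Your proposal is correct and follows essentially the same route as the paper: Newton--Kantorovich (Lemma~\ref{lem2}) started at $\eta_\gamma^*(t)$, with the inverse Jacobian controlled via the approximation $\mathcal{S}(V)$ of Lemma~\ref{lem1}, the residual from Lemma~\ref{lem4}~\eqref{ineq-bound-lemma4-3}, and the Lipschitz constant from Lemma~\ref{lem5}, yielding $K\varrho=O_p\big((q_n+1)e^{12q_n}\kappa_n\sqrt{\log(nh_1)/(nh_1)}\big)=o_p(1)$. The paper's proof is terser---it packages your explicit diagonal-plus-rank-one analysis of $\mathcal{S}(V)w$ into a citation of ``Lemma~1 of \cite{YLZ2016}'' and simply asserts $\rho(t)=O_p(e^{6q_n})$ for the Kantorovich constant---but the mechanics are identical.
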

\begin{proof}
Note that $\widehat{\eta}_{\gamma}(t)$ is the solution to
the equation $F_{\gamma}(\eta(t))=0.$
To prove this lemma, it is sufficient to show that the conditions in Lemma \ref{lem2} for $F_{\gamma}(\eta(t))$ hold.
In the Newton iterative step, we set $\eta_{\gamma}^{(0)}(t):=\eta_{\gamma}^*(t).$

In view of $(\eta_{\gamma}^*(t)^\top,\gamma(t)^\top)^\top \in \mathcal{B}$,
we have that 
$-F_{\gamma}(\eta_{\gamma}^*(t))\in \mathcal{L}_n(c_8e^{-q_n}, C_8e^{q_n})$, where $c_8$ and $C_8$ are absolute constants. 
Then, using Lemmas \ref{lem1} and \ref{lem4} and Lemma 1 of \cite{YLZ2016},
we have
\begin{eqnarray*}
r(t)&=&\| [F_{\gamma}^{(1)}(\eta_{\gamma}^*(t))]^{-1}F_{\gamma}(\eta_{\gamma}^*(t)) \|_{\infty}\\
& \le & \|[F_{\gamma}^{(1)}(\eta_{\gamma}^*(t))^{-1}-S_{\gamma}(t)]F_{\gamma}(\eta_{\gamma}^*(t)) \|_{\infty}+\|S_{\gamma}(t)F_{\gamma}(\eta_{\gamma}^*(t)) \|_{\infty} \\
& \le & (2n-1)\|F_{\gamma}^{(1)}(\eta_{\gamma}^*(t))^{-1}-S_{\gamma}(t)\|_{\max}\|F_{\gamma}(\eta_{\gamma}^*(t)) \|_{\infty}+\|S_{\gamma}(t)F_{\gamma}(\eta_{\gamma}^*(t)) \|_{\infty} \\
& \le & O_p\left((q_n+1)e^{6q_n}\sqrt{\frac{\log nh_1}{nh_1}}\right),
\end{eqnarray*}
where $S_{\gamma}(t)=\mathcal{S}(F_{\gamma}^{(1)}(\eta_{\gamma}^*(t))).$

This, together with Lemma \ref{lem5} and the condition $e^{12q_n} \kappa_n \sqrt{\log nh_1/(nh_1)}=o(1),$ implies that
\begin{align*}
\rho(t)r(t)= &O_p\big(e^{6q_n}\big)\times
O_p\left((q_n+1)e^{6q_n}\sqrt{\frac{\log nh_1}{nh_1}}\right) \\
= &O_p\left( (q_n+1)e^{12q_n} \kappa_n \sqrt{\frac{\log nh_1}{nh_1}} \right) =o_p(1).
\end{align*}
An application of Lemma \ref{lem2} yields
\begin{align*}
\sup_{t\in[a,b]}\sup_{\gamma(t)\in\mathcal{D}}\| \widehat{\eta}_\gamma(t) - \eta_{\gamma}^*(t) \|_\infty = O_p\left((q_n+1) e^{6q_n} \kappa_n \sqrt{\frac{\log nh_1}{nh_1}} \right)=o_p(1).
\end{align*}
It completes the proof.
\end{proof}

We now formally state the proof of Theorem \ref{theorem:consistency}.

\begin{proof}[Proof of Theorem \ref{theorem:consistency}]
Note that $\widehat Q_c(\gamma(t))=Q(\widehat{\eta}_\gamma(t), \gamma(t)).$
Define $Q_c(\gamma(t))=Q(\eta_{\gamma}^*(t),\gamma(t)).$
By Lemma \ref{lem4} and Taylor expansion of $\widehat Q_c(\gamma(t))$ at $\eta_{\gamma}^*(t),$ we have
\begin{align}\label{eqA5}
\|\widehat Q_c(\gamma(t))-Q_c(\gamma(t))\|
=O_p\left((q_n+1) e^{7q_n} \kappa_n^2 \sqrt{\frac{\log nh_1}{nh_1}} \right)
\end{align}
uniformly in $t\in[a,b]$ and $\gamma(t).$
By  the Donsker Theorem 
(e.g.  \citeauthor{V1998}, \citeyear{V1998}, Theorem 19.5),
$(n(n-1))^{-1}\sum_{i=1}^n\sum_{j\neq i}N_{ij}(t)=O_p(n^{-1})$
uniformly in $t\in[a,b].$
Note that
\begin{align*}
Q_c(\gamma(t))=\int_0^\tau&\mathcal{K}_{h_2}(s-t)d\Bigg[\frac{1}{N}\sum_{i=1}\sum_{j\neq i}Z_{ij}(s)N_{ij}(s)\Bigg]\\
&-\frac{1}{N}\sum_{i=1}\sum_{j\neq i}\int_0^{\tau}\mathcal{K}_{h_2}(s-t)Z_{ij}(s)\exp\{\alpha_{i,\gamma}^*(t)+\beta_{j,\gamma}^*(t)+Z_{ij}(s)^\top\gamma^*(t)\}ds.
\end{align*}
It follows that $Q_c(\gamma(t))=U(\gamma(t))+O_p((n^2h_2)^{-1})$ uniformly in $t\in[a,b]$, where 
\begin{align*}
U(\gamma(t))=&\lim_{N\rightarrow \infty}\frac{1}{N}\sum_{i=1}^n\sum_{j=1,j\neq i}^n
\mathbb{E}\Bigg\{Z_{ij}(t)\big(\exp\{\alpha_i^*(t)+\beta_j^*(t)+Z_{ij}(t)^\top\gamma^*(t)\}\\
&\hspace{1.6in}-\exp\{\alpha_{i,\gamma}^*(t)+\beta_{j,\gamma}^*(t)+Z_{ij}(t)^\top\gamma(t)\}\big)\Bigg\}.
\end{align*}
This, together with \eqref{eqA5}, gives that
\begin{align}
\nonumber
\|\widehat Q_c(\gamma(t))-U(\gamma(t))\|
\le& \|\widehat Q_c(\gamma(t))-Q_c(\gamma(t))\|+\|Q_c(\gamma(t))-U(\gamma(t))\|\\
\label{eqA6}
=&O_p\left((q_n+1) e^{7q_n} \kappa_n^2 \sqrt{\frac{\log nh_1}{nh_1}}+\frac{1}{n^2h_2} \right)
\end{align}
uniformly in $t\in[a,b]$ and $\gamma(t)$. 

Let $r_0$ be any positive constant such that $\|\gamma(t)-\gamma^*(t)\|\le r_0.$
For any $w\in \mathbb{R}^{p}$ satisfying $\|w\|=1$,
$w^\top U(\gamma^*(t)+wr)$ increases with $r$. 
It implies that for any $r \ge r_0>0$, 
$w^\top[U(\gamma^*(t)+wr)-U(\gamma^*(t))]\ge 0$. 
Then, we have
\begin{align*}
 & \|w\|\|U(\gamma^*(t)+wr)-U(\gamma^*(t))\| \\
\ge&|w^\top[U(\gamma^*(t)+wr)-U(\gamma^*(t))]|\\
\ge&|w^\top[U(\gamma^*(t)+wr,t)-U(\gamma^*(t))]|\\
\ge&w^\top H(\gamma^*(t)+w\bar r)w r>\bar\kappa r_0,
\end{align*}
where $\bar r\in[0,r_2]$ and $\bar\kappa>0$ is some constant.
Here the first inequality holds due to the Cauchy-Schwarz inequality,
and the last one follows from Condition \ref{condition:H3} and the continuity of $H(\gamma(t))$.
Therefore,
\begin{align}\label{eqA7}
\inf_{t\in[a,b]}\inf_{\|\gamma(t)-\gamma^*(t)\|>r_0}\|U(\gamma(t))-U(\gamma^*(t))\|>\bar\kappa r_0. 
\end{align}
Note that $\widehat Q_c(\widehat\gamma(t))=0$ almost surely and $U(\gamma^*(t))=0$. 
It then follows from \eqref{eqA6} that
\begin{align}
\nonumber
\|U(\widehat\gamma(t))-U(\gamma^*(t))\|=&\|\widehat Q_c(\widehat\gamma(t))-\{\widehat Q_c(\widehat\gamma(t))-U(\widehat\gamma(t))\}\|\\
\label{eqA8}
=&O_p\left((q_n+1) e^{7q_n} \kappa_n^2 \sqrt{\frac{\log nh_1}{nh_1}}+\frac{1}{n^2h_2} \right)=o_p(1),
\end{align}
and for sufficiently large $n$, 
$\|U(\widehat\gamma(t))\|<\bar\kappa r_0/2$
uniformly in $t\in[a,b]$.
Therefore, by \eqref{eqA7}, we obtain $\|\widehat\gamma(t)-\gamma^*(t)\|<r_0$ with probability tending to one.
A direct calculation yields
\begin{align*}
\frac{\partial U(\gamma(t))}{ \partial \gamma(t)^\top }=
\mathbb{E}\Big\{V_{\gamma,\gamma}(t)-V_{\gamma,\eta^*}(t)
V_{\eta^*,\eta^*}(t,\gamma(t))^{-1}V_{\eta^*,\gamma}(t)\Big\}.
\end{align*}
Taylor's expansion of $U(\widehat\gamma(t))$ at $\gamma^*(t)$ gives
\begin{align}
\nonumber
\sup_{t\in[a,b]}\|\widehat\gamma(t)-\gamma^*(t)\|
=&\sup_{t\in[a,b]}\|H_Q(\bar\gamma(t))^{-1}[U(\widehat\gamma(t))-U(\gamma^*(t))]\|\\
\nonumber
\le & \sup_{t\in[a,b]}\|H_Q(\bar\gamma(t))^{-1}\|\|U(\widehat\gamma(t))-U(\gamma^*(t))\|\\
\label{eqA9}
\le &\sup_{t\in[a,b]}\|H_Q(\bar\gamma(t))^{-1}\|\|U(\widehat\gamma(t))-U(\gamma^*(t))\|, 
\end{align}
where $\bar\gamma(t)$ is on the line segment between $\widehat\gamma(t)$ and $\gamma^*(t).$
By Condition \ref{condition:H3} and the continuity of $H_Q(\gamma(t))$,
there exists a positive constant $\varsigma$ such that
$\inf_{t\in[a,b]}\rho_{\min}\{H_Q(\bar\gamma(t))\}>\varsigma$. 
Thus, it follows from \eqref{eqA8} and \eqref{eqA9} that
$$
\sup_{t\in[a,b]}\|\widehat\gamma(t)-\gamma^*(t)\|=o_p(1).
$$

%For $\widehat{\eta}(t),$ by Lemma ??? and ..., we have
%\begin{align*}
%\|\widehat{\eta}(t)-\eta^*(t)\|_{\infty}=&
%\|(\widehat{\eta}_{\widehat\gamma}(t)-\eta_{\widehat\gamma}^*(t))-(\eta_{\widehat\gamma}^*(t)-\eta^*(t))\|_{\infty}\\
%\le&\|\widehat{\eta}_{\widehat\gamma}(t)-\eta_{\widehat\gamma}^*(t)\|_{\infty}+\|\eta_{\widehat\gamma}^*(t)-\eta^*(t)\|_{\infty}=o_p(1).
%\end{align*}

We next to show the consistency of $\widehat\eta(t)$. 
Let $\varepsilon_n=O((q_n+1) e^{7q_n} \kappa_n^2 \sqrt{\log(nh_1)/(nh_1)}+1/(n^2h_2))$.
By Lemma \ref{lem6}, it suffices to show
\begin{align}\label{eqA10}
\|\widehat\eta_{\gamma}(t)-\eta^*(t)\|_{\infty}=o_p(1)
\end{align}
uniformly in $t\in[a,b]$ and $\gamma(t)$ when $\|\gamma(t)-\gamma^*(t)\|<\varepsilon_n.$
Note that for $j=1,\dots,n-1,$
\begin{align*}
&(n-1)|F_i(\eta^*(t),\gamma^*(t))-F_i(\eta^*(t),\gamma(t))|\\
=&\bigg|\sum_{j\neq i}\int_{0}^{\tau}\exp\{\alpha_i^*(t)+\beta_j^*(t)\}\mathcal{K}_{h_1}(s-t)\Big[\exp\{Z_{ij}(s)^\top\gamma^*(t)\}ds -\exp\{Z_{ij}(s)^\top\gamma(t)\}\Big]\bigg|\\
=&\sum_{j\neq i}\int_{0}^{\tau}\mathcal{K}_{h_1}(s-t)\exp\{\alpha_i^*(t)+\beta_j^*(t)+Z_{ij}(s)^\top\tilde \gamma(t)\}|Z_{ij}(s)^\top\{\gamma^*(t)-\gamma(t)\}ds|\\
\le & 2ne^{q_n}\kappa_n\varepsilon_n.
\end{align*}
Similarly, for $j=1,\dots,n-1,$
we have $|F_{n+j}(\eta^*(t),\gamma^*(t))-F_{n+j}(\eta^*(t),\gamma(t))|\le e^{q_n}\kappa_n\varepsilon_n.$
Then, following the same line of the proof of Lemma \ref{lem6}, we get
\begin{eqnarray*}
r(t)&=&\| [F^{(1)}(\eta^*(t),\gamma(t))]^{-1}F(\eta^*(t),\gamma(t)) \|_{\infty}\\
& \le & \|[F^{(1)}(\eta^*(t),\gamma(t))^{-1}-\bar S(t)]F(\eta^*(t),\gamma(t)) \|_{\infty}+\|\bar S(t)F(\eta^*(t),\gamma(t)) \|_{\infty} \\
& \le & O_p\left((q_n+1)e^{13q_n}\kappa_n^3\sqrt{\frac{\log nh_1}{n h_1}}\right),
\end{eqnarray*}
and
\begin{align*}
\rho(t)r(t)= &O_p\big(e^{6q_n}\big)\times
O_p\left((q_n+1)e^{13q_n}\kappa_n^3\sqrt{\frac{\log nh_1}{n h_1}}\right) \\
= &O_p\left( (q_n+1)e^{19q_n} \kappa_n^3 \sqrt{\frac{\log nh_1}{nh_1}} \right),
\end{align*}
where $\bar S(t)=\mathcal{A}(F^{(1)}(\eta^*(t),\gamma(t)).$
Therefore, by Lemma \ref{lem2}, we have that (\ref{eqA10}) holds
if $(q_n+1)e^{19q_n} \kappa_n^3 \sqrt{\log(nh_1)/(nh_1)}=o(1).$
This completes the proof.
\end{proof}

\subsection{Proofs for Theorem \ref{theorem-central-gamma}}
\label{subsection-proof-th2}

The following lemma gives the asymptotic representation of 
$\widehat{\eta}_{\gamma^*}(t)$ that will be used the proof of 
Theorem \ref{theorem-central-gamma}. 

\begin{lemma}\label{lem7}
If $(q_n+1)e^{15q_n}\kappa_n= o( (nh_1)^{1/4}/(\log nh_1)^{1/2} )$,
then
\begin{equation}
\label{eq-lemma-widehat-eta}
\sqrt{nh_1}\big(\widehat{\eta}_{\gamma^*}(t)-\eta^*(t)\big)
= - [V_{\eta^*,\eta^*}(t,\gamma^*(t))]^{-1}\sqrt{\frac{h_1}{n}}\int_{0}^{\tau}\mathcal{K}_{h_1}(s-t)
d\widetilde{\mathcal{M}}(s)+o_p(1),
\end{equation}
where %$V(t)= -n^{-1}\partial F(\eta^*(t),\gamma^*(t))/\partial \eta(t)^\top,$ %and
$\widetilde{\mathcal{M}}(s)=(\widetilde{M}_{1}(s),\dots,\widetilde{M}_{2n-1}(s))^\top$ and
\begin{align*}
\widetilde M_{i}(s)=
\begin{cases}
\sum_{k\ne i} \mathcal{M}_{ik}(s), & i=1,\dots, n, \\
\sum_{k\ne (i-n)} \mathcal{M}_{ki}(s), & i=n+1,\dots,2n-1.
\end{cases}
\end{align*}
Moreover, for a fixed $k$,
$\sqrt{nh_1}(\widehat{\eta}_{\gamma^*}(t)-\eta^*(t))_{1:k}$ converges in distribution to a
$k$-dimensional zero-mean normal random vector
with the covariance given by the upper-left $k\times k$ block of $\mu_{0}S^*(t)$,
where $\mu_{0}=\int \mathcal{K}^2(u)du$.
\end{lemma}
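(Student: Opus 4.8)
The plan is to linearise the defining equation $F_{\gamma^*}(\widehat\eta_{\gamma^*}(t))=0$ about the true value and to read the representation and the limit law off the leading martingale term. Because the second argument is frozen at $\gamma^*(t)$, the population solution coincides with $\eta^*(t)$, and Lemma~\ref{lem6} already supplies the existence of $\widehat\eta_{\gamma^*}(t)$ with probability tending to one together with the uniform rate $\|\widehat\eta_{\gamma^*}(t)-\eta^*(t)\|_\infty=O_p\big((q_n+1)e^{6q_n}\kappa_n\sqrt{\log(nh_1)/(nh_1)}\big)$. A coordinatewise Taylor expansion of $F_{\gamma^*}$ at $\eta^*(t)$ and rearrangement give
\[
\widehat\eta_{\gamma^*}(t)-\eta^*(t)=-\big[V_{\eta^*,\eta^*}(t,\gamma^*(t))\big]^{-1}\big\{F_{\gamma^*}(\eta^*(t))+R(t)\big\},
\]
where $F_{\gamma^*}^{(1)}(\eta^*(t))=V_{\eta^*,\eta^*}(t,\gamma^*(t))$ and $R(t)$ is the integral-form second-order remainder. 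Next I would decompose $F_{\gamma^*}(\eta^*(t))$ into its martingale part, whose $i$-th coordinate is $(n-1)^{-1}\int_0^\tau\mathcal K_{h_1}(s-t)\,d\widetilde M_i(s)$, and a kernel-smoothing bias part of coordinatewise order $O(e^{q_n}h_1^2)$, which is exactly the $B_{2i}$ term isolated in the proof of Lemma~\ref{lem4}.

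The second step is to show that, after left-multiplication by $[V_{\eta^*,\eta^*}(t,\gamma^*(t))]^{-1}$ and scaling by $(nh_1)^{1/2}$, both the bias part and $R(t)$ are $o_p(1)$. For the remainder, Lemma~\ref{lem5} gives $\|R(t)\|_\infty=O_p\big(e^{q_n}\|\widehat\eta_{\gamma^*}(t)-\eta^*(t)\|_\infty^2\big)=O_p\big((q_n+1)^2e^{13q_n}\kappa_n^2\log(nh_1)/(nh_1)\big)$. Since $-V_{\eta^*,\eta^*}(t,\gamma^*(t))$ lies, up to an $O(e^{q_n}h_1^2)$ perturbation absorbed by Condition~\ref{condition:bandwidth5}, in a matrix class $\mathcal L_n(b_L,b_U)$ with $b_U/b_L=o(n)$, Lemma~\ref{lem1} lets one carry out the inversion through the explicit approximant $\mathcal S(\cdot)$ while tracking the $e^{q_n}$ factors; the telescoping of the column block-sums of the bias vector (the $\alpha$- and $\beta$-block sums differ only through the $n$-th node's edges) keeps the inverted bias at order $O(e^{q_n}h_1^2)$, so its scaled contribution is $O\big((ne^{2q_n}h_1^5)^{1/2}\big)=o(1)$ by Condition~\ref{condition:bandwidth5}. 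The scaled remainder contribution is $o_p(1)$ exactly under $(q_n+1)e^{15q_n}\kappa_n=o\big((nh_1)^{1/4}/(\log nh_1)^{1/2}\big)$; the extra power of $e^{q_n}$ in that hypothesis is the slack that pays for passing $R(t)$ through the possibly ill-conditioned inverse Hessian and for closing the gap between the two consistency rates. This yields \eqref{eq-lemma-widehat-eta}, after noting that $(nh_1)^{1/2}/(n-1)=(h_1/n)^{1/2}(1+O(n^{-1}))$ and the correction multiplies an $O_p(1)$ vector.

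For the central limit theorem I would condition on the $\sigma$-field generated by the covariates. Given the covariates, $V_{\eta^*,\eta^*}(t,\gamma^*(t))$ is deterministic and $\zeta_n(t):=(h_1/n)^{1/2}\int_0^\tau\mathcal K_{h_1}(s-t)\,d\widetilde{\mathcal M}(s)$ is the terminal value of a triangular-array local square-integrable martingale with deterministic predictable variation; the kernel identity $\int\mathcal K_{h_1}^2(s-t)g(s)\,ds=h_1^{-1}\mu_0 g(t)+O(h_1)$ for slowly varying $g$ converts the conditional covariation of $\zeta_n(t)$ into $\mu_0\big(-V_{\eta^*,\eta^*}(t,\gamma^*(t))\big)+o_p(\cdot)$, a Bartlett-type identity for this model. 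Rebolledo's martingale central limit theorem, applied conditionally (the Lindeberg condition is immediate because each jump of a fixed coordinate of $-[V_{\eta^*,\eta^*}]^{-1}\zeta_n(t)$ is $O(e^{q_n}/(nh_1)^{1/2})\to0$), then gives that the first $k$ coordinates of $-[V_{\eta^*,\eta^*}]^{-1}\zeta_n(t)$ converge, given the covariates, to a centred $k$-variate normal with covariance $\mu_0\big[-V_{\eta^*,\eta^*}(t,\gamma^*(t))\big]^{-1}_{1:k,1:k}$; the key simplification here is the exact identity $B^{-1}BB^{-1}=B^{-1}$ with $B=-V_{\eta^*,\eta^*}(t,\gamma^*(t))$, restricted to the fixed $k\times k$ block. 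Finally, Lemma~\ref{lem1} together with laws of large numbers for the diagonal entries and the block-sum of $-V_{\eta^*,\eta^*}(t,\gamma^*(t))$ shows that this $k\times k$ block tends in probability to $\mu_0[S^*(t)]_{1:k,1:k}$; since the limit is non-random, the conditional convergence upgrades to unconditional convergence in distribution, and combining with \eqref{eq-lemma-widehat-eta} completes the proof (and feeds directly into Theorem~\ref{theorem-central-degree}).

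I expect the main obstacle to be the interaction between the diverging dimension $2n-1$ of $\eta$, the identifiability factor $e^{q_n}$ that inflates $\|[V_{\eta^*,\eta^*}]^{-1}\|$, and the localisation error of the kernel. A naive bound $\|[V_{\eta^*,\eta^*}]^{-1}v\|_\infty\le\|[V_{\eta^*,\eta^*}]^{-1}\|_\infty\|v\|_\infty$ loses a factor of $n$ and fails to make the bias negligible, so it must be replaced by the explicit structure of $\mathcal S(\cdot)$ together with the block-sum telescoping; and one cannot substitute $S^*(t)$ for $[V_{\eta^*,\eta^*}]^{-1}$ inside $[V_{\eta^*,\eta^*}]^{-1}\zeta_n(t)$ before passing to the limit, since the $O(e^{q_n}/n)$ entrywise approximation error summed against the $O(n)$ coordinates of $\zeta_n(t)$ is not $o_p(1)$. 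This is precisely why the covariance is identified only after conditioning on the covariates, invoking the Bartlett-type identity, and applying $B^{-1}BB^{-1}=B^{-1}$ to the fixed-size block rather than to the full $(2n-1)$-dimensional inverse.
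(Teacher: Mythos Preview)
Your proof of the asymptotic representation \eqref{eq-lemma-widehat-eta} follows essentially the same route as the paper: a second-order Taylor expansion of $F_{\gamma^*}$ at $\eta^*(t)$, a martingale/bias splitting of $F_{\gamma^*}(\eta^*(t))$, and control of the inverted bias and remainder through the explicit approximant $\mathcal S(\cdot)$ of Lemma~\ref{lem1}. Your bookkeeping of the $e^{q_n}$ powers is in fact slightly cleaner than the paper's (which carries some inflated exponents), and you correctly identify that the hypothesis $(q_n+1)e^{15q_n}\kappa_n=o((nh_1)^{1/4}/(\log nh_1)^{1/2})$ is exactly what is needed so that $\sqrt{nh_1}\,[V(t)]^{-1}R(t)=o_p(1)$.

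For the central limit theorem you take a genuinely different route. The paper first \emph{replaces} $[V(t)]^{-1}$ by $S^*(t)$ in front of the martingale term, writing $[V(t)]^{-1}\zeta_n(t)=B_{41}+B_{42}+B_{43}$ with $B_{41}=(V(t)^{-1}-S(t))\zeta_n(t)$, $B_{42}=(S(t)-S^*(t))\zeta_n(t)$, $B_{43}=S^*(t)\zeta_n(t)$; it kills $B_{41}$ by a direct variance computation ($\mathbb E B_{41}=0$, $\|\mathrm{Cov}(B_{41})\|_{\max}=O(e^{11q_n}/n)$) and $B_{42}$ by the explicit diagonal-plus-rank-one structure of $S(\cdot)$ together with the rates in \eqref{eqA12}--\eqref{eqA13}, and then applies Rebolledo's theorem to $S^*(t)\zeta_n(t)$ with predictable covariation $S^*(t)\bar V(t,\tau)S^*(t)\to\mu_0 S^*(t)$. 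You instead keep $[V(t)]^{-1}\zeta_n(t)$ intact, condition on the covariates, invoke the Bartlett identity so that the conditional covariance collapses to $\mu_0[V(t)]^{-1}$, and only then pass the fixed $k\times k$ block to $\mu_0[S^*(t)]_{1:k,1:k}$ via Lemma~\ref{lem1} and a law of large numbers. Both approaches are valid; yours avoids the $B_{41}/B_{42}$ analysis at the price of an extra conditioning step, while the paper's substitution has the advantage that the leading term $S^*(t)\zeta_n(t)$ is already in the form used downstream in Theorems~\ref{theorem-central-gamma} and \ref{theorem-central-degree} and in the resampling constructions of Section~\ref{section:test}.

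One correction: your assertion that ``one cannot substitute $S^*(t)$ for $[V_{\eta^*,\eta^*}]^{-1}$ inside $[V_{\eta^*,\eta^*}]^{-1}\zeta_n(t)$ before passing to the limit'' is wrong. You are right that the crude bound $\|(V^{-1}-S^*)\zeta_n\|_\infty\le(2n-1)\|V^{-1}-S^*\|_{\max}\|\zeta_n\|_\infty$ fails, but the paper does not use it; the substitution succeeds because $B_{41}$ has mean zero and its entrywise variance is $O(e^{11q_n}/n)$, and because $S(t)-S^*(t)$ differs from zero only through the scalars $1/v_{ll}-1/v_{ll}^*$ and $1/v_{2n,2n}-1/v_{2n,2n}^*$, which are controlled by \eqref{eqA12}--\eqref{eqA13}. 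So your alternative route is sound, but the obstacle you describe is not real.
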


\begin{proof}
Let $V(t)=-V_{\eta^*,\eta^*}(t,\gamma^*(t))$
and $v_{ij}(t)$ be the $(i,j)$th element of $V(t)$. 
Define $v_{2n,2n}(t)=\sum_{i=1}^nv_{ii}(t)-\sum_{i=1}^n\sum_{j=1,j\neq i}^{2n-1}v_{ij}(t)$.
A second-order Taylor expansion of $F(\widehat{\eta}_{\gamma^*}(t),\gamma^*(t))$ gives
\begin{align*}
\widehat{\eta}_{\gamma^*}(t)-\eta^*(t)=& [V(t)]^{-1}
\underbrace{F(\eta^*(t),\gamma^*(t))}_{ B_{31}(t)} \\
&+ [V(t)]^{-1}\underbrace{\Bigg[\frac{1}{2n}\sum_{k=1}^{2n-1}\big(\widehat{\eta}_{k,\gamma^*}(t)-\eta_k^*(t)\big)
\frac{\partial F(\bar\eta(t),\gamma^*(t))}{\partial \eta_k(t)\partial \eta(t)^\top}\big(\widehat{\eta}_{\gamma^*}(t)-\eta^*(t)\big)\Bigg]}_{ B_{32}(t) },
\end{align*}
where $\bar\eta(t)$ is between $\widehat{\eta}_{\gamma^*}(t)$ and $\eta^*(t)$. 
Let $B_{32,l}(t)$ be the $l$th element of $B_{32}(t)$. 
Let $b_{l,ij}(t)=\partial F_{l}(\bar\eta(t),\gamma^*(t))/\partial \eta_i(t)\partial \eta_j(t).$
Note that for $1\le l\le n$, we have 
\begin{align*}
|B_{32,l}(t)|= & \frac{1}{2n}\Big|\big(\widehat{\eta}_{\gamma^*}(t)-\eta^*(t)\big)^\top
\frac{\partial F_{l}(\bar\eta(t),\gamma^*(t))}{\partial \eta(t)\partial \eta(t)^\top}\big(\widehat{\eta}_{\gamma^*}(t)-\eta^*(t)\big)\Big|\\
\le & \frac{1}{2n}\|\widehat{\eta}_{\gamma^*}(t)-\eta^*(t)\|_{\infty}^2
\sum_{i,j=1}^{2n-1}|b_{l,ij}(t)|,
%\Big|\frac{\partial F_{l}(\bar\eta(t),\gamma^*(t))}{\partial \eta_i(t)\partial \eta_j(t)}\Big|
\end{align*}
and 
\begin{align*}
b_{l,ij}(t) = \begin{cases}
-\sum_{j\neq l}\int_0^{\tau}\mathcal{K}_{h_1}(s-t)\exp\{\bar\alpha_l(t)+\bar\beta_j(t)+Z_{lj}(s)^\top\gamma^*(t)\}ds, & i=j= l, \\
-\int_0^{\tau}\mathcal{K}_{h_1}(s-t)\exp\{\bar\alpha_l(t)+\bar\beta_{j-n}(t)+Z_{ij}(s)^\top\gamma^*(t)\}ds,  & i=j\neq l, \\
 &\mbox{or~} i=l, j\neq i, \mbox{~or~} i\neq j, j=l, \\
0, & \mbox{otherwise}. 
\end{cases}
\end{align*}
%\begin{align*}
%&\Bigg[\frac{\partial F_{l}(\bar\eta(t),\gamma^*(t))}{\partial \eta(t)\partial \eta(t)^\top}\Bigg]_{ll}
%=-\sum_{j\neq l}\exp\{\bar\alpha_l(t)+\bar\beta_j(t)+Z_{lj}^\top\gamma^*(t)\},\\
%&\Bigg[\frac{\partial F_{l}(\bar\eta(t),\gamma^*(t))}{\partial \eta(t)\partial \eta(t)^\top}\Bigg]_{jj}=
%\Bigg[\frac{\partial F_{l}(\bar\eta(t),\gamma^*(t))}{\partial \eta(t)\partial \eta(t)^\top}\Bigg]_{lj}
%=\Bigg[\frac{\partial F_{l}(\bar\eta(t),\gamma^*(t))}{\partial \eta(t)\partial \eta(t)^\top}\Bigg]_{jl}\\
%&=-\exp\{\bar\alpha_l(t)+\bar\beta_{j-n}(t)+Z_{ij}^\top\gamma^*(t)\},~ n+1\le j\le 2n-1,~ j\neq n+l,\\
%&\Bigg[\frac{\partial F_{l}(\bar\eta(t),\gamma^*(t))}{\partial \eta(t)\partial \eta(t)^\top}\Bigg]_{ij}=\Bigg[\frac{\partial F_{l}(\bar\eta(t),\gamma^*(t))}{\partial \eta(t)\partial \eta(t)^\top}\Bigg]_{ij}
%=0,~~\text{otherwise.}
%\end{align*}
Then, by Lemma \ref{lem6}, we have
\begin{align*}
|B_{32,l}(t)|\le & \frac{1}{2n}\|\widehat{\eta}_{\gamma^*}(t)-\eta^*(t)\|_{\infty}^2
\sum_{i,j=1}^{2n-1}|b_{l,ij}(t)|
\le 2 e^{q_n}\|\widehat{\eta}_{\gamma^*}(t)-\eta^*(t)\|_{\infty}^2\\
=&O_p\Big((q_n+1)^2e^{25q_n}\kappa_n^2 \frac{\log nh_1}{nh_1}\Big).
\end{align*}
%Moreover, by Lemma 1, we have
%\begin{align*}
%\|V(t)^{-1}\|_{\max}\le \|V(t)^{-1}-S(t)\|_{\max}+\|S(t)\|_{\max}=O_p\Big(e^{5q_n}\Big).
%\end{align*}
In view of Lemmas \ref{lem1} and \ref{lem6}, for $1\le l\le n$, we have
\begin{align*}
|(V(t)^{-1}B_{32})_l|\le& |[(V(t)^{-1}-S(t))B_{32}]_l|+|(S(t)B_{32})_l|\\
\le&(2n-1)\|V(t)^{-1}-S(t)\|_{\max}\|B_{32}(t)\|_{\infty}
+\max_{1\le l\le 2n-1}\frac{|B_{32,l}(t)|}{v_{ll}(t)}\\
&+\|\widehat{\eta}_{\gamma^*}(t)-\eta^*(t)\|_{\infty}^2\times\frac{|\sum_{i=1}^{n-1}b_{l,in}(t)|}{nv_{2n,2n}(t)}\\
=&O_p\Big((q_n+1)^2e^{30q_n}\kappa_n^2\frac{\log nh_1}{nh_1}\Big)=o_p((nh_1)^{-1/2}).
\end{align*}
Similarly, we have $(V(t)^{-1}B_{32}(t))_l=o_p((nh_1)^{-1/2})$ for $n+1\le l\le 2n-1$.
This shows 
\begin{align*}
\sqrt{nh_1}\big(\widehat{\eta}_{\gamma^*}(t)-\eta^*(t)\big)
=&V(t)^{-1}B_{31}(t)+o_p(1).
\end{align*}
Now, we analyze $B_{31}(t)$ that is rewritten as
\begin{align*}
B_{31}(t)=\sqrt{\frac{h_1}{n}}\int_{0}^{\tau}\mathcal{K}_{h_1}(s-t) d \tilde{\mathcal{M}}(s) - B_{31,2}(t),
\end{align*}
where $B_{31,2}(t)=(B_{31,21}(t),\dots,B_{31,2(2n-1)}(t))^\top$ and 
\begin{align*}
B_{31,2i}(t)&=\frac{1}{n}\sum_{j\neq i}\int_{0}^{\tau}\mathcal{K}_{h_1}(s-t)\big[\exp\{\pi_{ij}^*(s,t)\}-\exp\{\pi_{ij}^*(s)\}\big]ds
~~\text{for}~~i\in[n],\\
B_{31,2(n+j)}(t)&=\frac{1}{n}\sum_{i\neq j}\int_{0}^{\tau}\mathcal{K}_{h_1}(s-t)\big[\exp\{\pi_{ij}^*(s,t)\}-\exp\{\pi_{ij}^*(s)\}\big]ds
~~\text{for}~~j\in[n-1].
\end{align*}
In order to prove \eqref{eq-lemma-widehat-eta}, it is sufficient to demonstrate 
\[
V(t)^{-1}B_{31,2}=o_p((nh_1)^{-1/2}).
\]
A direct calculation gives
\begin{align}\label{eqA11}
|(V(t)^{-1}B_{31,2})_l|
\le&\|V(t)^{-1}-S(t)\|_{\max}\sum_{i=1}^{2n-1}|B_{31,2i}(t)|
+\max_{1\le l\le 2n-1}\frac{|B_{31,2l}(t)|}{v_{ll}(t)}\\
&+\frac{|\sum_{i=1}^{n-1}b_{31,2 in}(t)|}{nv_{2n,2n}(t)},
\end{align}
where $b_{31,2 in}(t)=\int_{0}^{\tau}\mathcal{K}_{h_1}(s-t)[\exp\{\pi_{in}^*(s,t)\}-\exp\{\pi_{in}^*(s)\}]ds$.
By Conditions \ref{condition:CB1}-\ref{condition:bandwidth5}, we have
\begin{align*}
\mathbb{E}\{B_{31,2i}(t)\}=O(e^{q_n}h_1^2),\quad
\text{Var}(B_{31,2i}(t))=O(e^{2q_n}h_1^4/n).
\end{align*}
Therefore, by Lemma \ref{lem1} and Chebyshev's inequality, the first term in the right-hand side of \eqref{eqA11}
is of order $O_p(e^{6q_n}h_1^2)$.
Similarly, it can be shown that the last two terms  in the right-hand side of \eqref{eqA11} are of order 
$O_p( e^{2q_n} h_1^2 )$. 
These facts, together with $ne^{12q_n}h_1^5\rightarrow 0,$ imply
\begin{align*}
|(V(t)^{-1}B_{31,2})_l|=O_p(e^{6q_n}h_1^2)=o_p((nh_1)^{-1/2}).
\end{align*}
This completes the proof of \eqref{eq-lemma-widehat-eta}. 

Now, we prove the second part of Lemma \ref{lem7}.
With similar arguments as in the proof of \eqref{ineq-bound-Ft}, 
by Lemma \ref{lemma3}, we have
\begin{align}\label{eqA12}
\sup_{t\in [a,b]} \max_{1\le i\le n}\bigg|\frac{1}{n}\sum_{j\neq i}\int_0^{\tau}\mathcal{K}_{h_1}(s-t)[e^{\pi_{ij}^*(s,t)}-\mathbb{E}e^{\pi_{ij}^*(t)}]ds\bigg|&=O_p(e^{q_n}\sqrt{\log(nh_1)/(nh_1)}),\\
\label{eqA13}
\sup_{t\in[a,b]} \max_{1\le j\le n-1}\bigg|\frac{1}{n}\sum_{i\neq j}\int_0^{\tau}\mathcal{K}_{h_1}(s-t)[e^{\pi_{ij}^*(s,t)}
-\mathbb{E}e^{\pi_{ij}^*(t)}]ds\bigg|&=O_p(e^{q_n}\sqrt{\log(nh_1)/(nh_1)}).
\end{align}
Thus, $n^{-1}\sum_{j\neq i}\sum_{j\neq i}\int_0^{\tau}\mathcal{K}_{h_1}(s-t)e^{\pi_{ij}^*(s,t)}ds$ and $n^{-1}\sum_{i\neq j}\sum_{j\neq i}\int_0^{\tau}\mathcal{K}_{h_1}(s-t)e^{\pi_{ij}^*(s,t)}ds$
converge in probability to $\lim_{n\rightarrow \infty}n^{-1}\sum_{j\neq i}\mathbb{E}e^{\pi_{ij}^*(t)}$
and $\lim_{n\rightarrow \infty} n^{-1}\sum_{i\neq j}\mathbb{E}e^{\pi_{ij}^*(t)},$ respectively.
Then, we have
\begin{align*}
\sqrt{nh_1}\big(\widehat{\eta}_{\gamma^*}(t)-\eta^*(t)\big)
=&V(t)^{-1}\sqrt{\frac{h_1}{n}}\int_{0}^{\tau}\mathcal{K}_{h_1}(s-t)d\tilde{\mathcal{M}}(s)+o_p(1)\\
=& \underbrace{(V(t)^{-1}-S(t))\sqrt{\frac{h_1}{n}}\int_{0}^{\tau}\mathcal{K}_{h_1}(s-t)d\tilde{\mathcal{M}}(s)}_{ B_{41} } \\
& + \underbrace{(S(t)-S^*(t))\sqrt{\frac{h_1}{n}}\int_{0}^{\tau}\mathcal{K}_{h_1}(s-t)d\tilde{\mathcal{M}}(s)}_{ B_{42}} \\
& + \underbrace{S^*(t)\sqrt{\frac{h_1}{n}}\int_{0}^{\tau}\mathcal{K}_{h_1}(s-t)d\tilde{\mathcal{M}}(s)}_{ B_{43} } + o_p(1).
\end{align*}
By Lemma \ref{lem1}, a direct calculation yields that $\mathbb{E}(B_{41})=0$ and $\|\text{Cov}(B_{41})\|_{\max}=O(e^{11q_n}/n).$
Therefore, $B_{41}=o_p(1).$
For $B_{42},$ its $l$th ($1\le l\le n$) element can be written as
\begin{align*}
B_{42,l}=&\sum_{k=1}^{2n-1}(s_{lk}(t)-s_{lk}^*(t))\sqrt{\frac{h_1}{n}}\int_{0}^{\tau}\mathcal{K}_{h_1}(s-t)d \tilde M_{l}(s)\\
=&\Big(\frac{1}{v_{ll}(t)}-\frac{1}{v_{ll}^*(t)}\Big)\sqrt{\frac{h_1}{n}}\int_{0}^{\tau}\mathcal{K}_{h_1}(s-t)d \tilde M_{l}(s)\\
&+\sqrt{\frac{h_1}{n}}\Big(\frac{1}{v_{2n,2n}(t)}-\frac{1}{v_{2n,2n}^*(t)}\Big)
\sum_{i=1}^{n-1}\int_{0}^{\tau}\mathcal{K}_{h_1}(s-t)d \mathcal{M}_{in}(s),
\end{align*}
which is of order $O_p(e^{q_n}\sqrt{\log(n)/n})$ by (\ref{eqA12}).
This fact, together with $B_{41}=o_p(1),$ gives
\[\sqrt{nh_1}\big(\widehat{\eta}_{\gamma^*}(t)-\eta^*(t)\big)
=S^*(t)\sqrt{\frac{h_1}{n}}\int_{0}^{\tau}\mathcal{K}_{h_1}(s-t)d\tilde{\mathcal{M}}(s)+o_p(1).
\]
%For the second part, note that the $i$th element of
%$\int_{0}^{\tau}\mathcal{K}_{h_1}(s-t)d\tilde{\mathcal{M}}(s)$
%is a sum of independent but not identically distributed random variables.
Note that the $i$th element $U_i(t)$ of
$\sqrt{h_1/n}S^*(t)\int_{0}^{\tau}\mathcal{K}_{h_1}(s-t)d\tilde{\mathcal{M}}(s)$
is
\begin{align*}
U_i(t)=&\sqrt{\frac{h_1}{n}}\frac{\sum_{j\neq i}\int_0^{\tau}\mathcal{K}_{h_1}(s-t)d\mathcal{M}_{ij}(s)}{v_{ii}^*(t)}
+\sqrt{\frac{h_1}{n}}\frac{\sum_{l=1}^{n-1}\int_0^{\tau}\mathcal{K}_{h_1}(s-t)d\mathcal{M}_{ln}(s)}{v_{2n,2n}^*(t)},
~i\in[n],\\
U_{n+j}(t)=&\sqrt{\frac{h_1}{n}}\frac{\sum_{i\neq j}\int_0^{\tau}\mathcal{K}_{h_1}(s-t)d\mathcal{M}_{ij}(s)}{v_{ii}^*(t)}
-\sqrt{\frac{h_1}{n}}\frac{\sum_{l=1}^{n-1}\int_0^{\tau}\mathcal{K}_{h_1}(s-t)d\mathcal{M}_{ln}(s)}{v_{2n,2n}^*(t)},
~j\in[n-1].
\end{align*}
Therefore, $U_i(t)~(i=1,\dots,2n-1)$ are sums of local square-integrable martingales with the quadratic variation process given by $S^*(t)\bar V(t,u)S^*(t),$ where $\bar V(t,u)=(\bar v_{ij}(t,u))$ and
\begin{align*}
\bar v_{i,i}(t,u)=&\frac{h_1}{n}\sum_{j\neq i}\int_0^{u}\mathcal{K}_{h_1}^2(s-t)e^{\pi_{ij}^*(s)}ds,~~i\in[n],\\
\bar v_{n+j,n+j}(t,u)=&\frac{h_1}{n}\sum_{i\neq j}\int_0^{u}\mathcal{K}_{h_1}^2(s-t)e^{\pi_{ij}^*(s)}ds,~~j\in[n-1],\\
\bar v_{i,n+j}(t,u)=&\bar v_{n+j,i}(t,u)=\frac{h_1}{n}\int_0^{u}\mathcal{K}_{h_1}^2(s-t)e^{\pi_{ij}^*(s)}ds,~~i\in[n],
~j\in[n-1],\\
%\frac{h_1}{n}\int_0^{u}\mathcal{K}_{h_1}^2(s-t)e^{\pi_{ij}^*(s)}ds,~~i\in[n],
%~j\in[n-1],\\
\bar v_{i,j}(t,u)=&0,~~\text{otherwise}.
\end{align*}
It can be shown that $S^*(t)\bar V(t,\tau)S^*(t)\rightarrow \mu_{0}S^*(t)$
with probability tending to 1.
Specifically, by the uniform law of large numbers,
$\bar v_{ij}(t,\tau)$ converge in probability to its expectation, 
whose expression is 
\begin{align*}
\mathbb{E}\{\bar v_{i,i}(t)\}=&\frac{1}{n}\sum_{j\neq i}\mathbb{E}\{\mu_{0}e^{\pi_{ij}^*(t)}+\mu_{12}e^{\pi_{ij}^*(t)}\pi_{ij}^{*(1)}(t)h+O_p(q_ne^{q_n}h^2)\}\\
=&\mu_{0}v_{i,i}^*(t)+\mu_{12}h\tilde v_{i,i}^*(t)+O(q_ne^{q_n}h^2),~~i\in[n],\\
\mathbb{E}\{\bar v_{n+j,n+j}(t)\}=&\frac{1}{n}\sum_{i\neq j}\mathbb{E}\{\mu_{0}e^{\pi_{ij}^*(t)}+\mu_{12}e^{\pi_{ij}^*(t)}\pi_{ij}^{*(1)}(t)h+O_p(q_ne^{q_n}h^2)\}\\
=&\mu_{0}v_{n+j,n+j}^*(t)+\mu_{12}h\tilde v_{n+j,n+j}^*(t)+O(q_ne^{q_n}h^2),~~j\in [n-1],\\
\mathbb{E}\{\bar v_{i,j}(t)\}=&\mathbb{E}\{\bar v_{j,i}(t)\}=\frac{1}{n}\mathbb{E}\{\mu_{02}e^{\pi_{ij}^*(t)}
+\mu_{12}e^{\pi_{ij}^*(t)}\pi_{ij}^{*(1)}(t)h+O_p(q_ne^{q_n}h^2)\}\\
=&\mu_{0}v_{ij}^*(t)+h\mu_{12}\tilde v_{ij}^*(t)+O(q_ne^{q_n}h^2/n),~~i\in[n],~j\in n+[n-1],\\
\mathbb{E}\{\bar v_{i,j}(t)\}=&0, ~~~\text{otherwise}~~~(\text{Here, we set}~~ v_{ij}^*(t)=0~\text{and}~\tilde v_{ij}^*(t)=0).
\end{align*}
Therefore,
$$\mathbb{E} \bar V(t,\tau)=\mu_{0}V^*(t)+h\mu_{12}\widetilde V^*(t)+O(q_ne^{q_n}h^2),$$
where $V^*(t)=(v_{ij}^*(t))$ and $\widetilde V^*(t)=(\tilde v_{ij}^*(t)).$
Then, the $(i,j)$th element of $S^*(t)\widetilde V^*(t)$ satisfies 
\begin{align*}
|\sum_{k=1}^{2n-1}s_{ik}^*(t)\tilde v_{ki}^*(t)|\le &
%\frac{|\tilde v_{ii}^*(t)|}{v_{ii}^*(t)}
%+\frac{|\tilde v_{ii}^*(t)-\sum_{k=n+1}^{2n-1}\tilde v_{ki}^*(t)|}{v_{2n,2n}^*(t)}=&
\frac{|\tilde v_{ii}^*(t)|}{v_{ii}^*(t)}+\frac{|\tilde v_{ni}^*(t)|}{v_{2n,2n}^*(t)}\le q_ne^{2q_n}(1+1/n),~
i=j\in[n],\\
|\sum_{k=1}^{2n-1}s_{ik}^*(t)\tilde v_{ki}^*(t)|\le&
%\frac{|\tilde v_{ii}^*(t)|}{v_{ii}^*(t)}
%+\frac{|\tilde v_{ii}^*(t)-\sum_{k=1}^{n}\tilde v_{ki}(t)|}{v_{2n,2n}^*(t)}=
\frac{|\tilde v_{ii}^*(t)|}{v_{ii}(t)}\le q_ne^{2q_n},~i=j\in n+[n-1],\\
|\sum_{k=1}^{2n-1}s_{ik}^*(t)\tilde v_{kj}^*(t)|
%=&\frac{|\tilde v_{jj}^*(t)-\sum_{k=n+1}^{2n-1}\tilde v_{kj}^*(t)|}{v_{2n,2n}^*(t)}
\le&\frac{|\tilde v_{nj}^*(t)|}{v_{2n,2n}^*(t)}\le q_ne^{2q_n}/n,~i\neq j\in[n],\\
|\sum_{k=1}^{2n-1}s_{ik}^*(t)\tilde v_{kj}^*(t)|=&|\sum_{k=1}^{2n-1}s_{jk}^*(t)\tilde v_{ki}(t)|
\le\frac{|\tilde v_{i,n+i}^*(t)|}{v_{ii}^*(t)}+\frac{|\tilde v_{i,n+i}^*(t)|}{v_{2n,2n}^*(t)}\\
\le&q_ne^{2q_n}/n,~i\in[n],~j=\in n+[n-1]~(j\neq n+i),\\
|\sum_{k=1}^{2n-1}s_{ik}^*(t)\tilde v_{kj}^*(t)|=&0,~j=n+i~(i\in[n-1])~ \text{or}~ i=n+j~(j\in[n-1]).
\end{align*}
%If $i=n+1,\dots,2n-1,~j=1,\dots,n, (i\neq n+j)$
%\begin{align*}
%|\sum_{k=1}^{2n-1}s_{0,ik}(t)\tilde\omega_{kj}(t)|=C_{Nn}Q_{Nnh}^{???}/n.
%\end{align*}
This implies
\begin{equation}
\label{eq-lemma-SV-diff}
\|S^*(t)\tilde V^*(t)-q_ne^{2q_n}\mathbb{I}\|_{\max}\le 2q_ne^{2q_n}/n,
\end{equation}
where $\mathbb{I}$ denotes a $(2n-1)\times (2n-1)$ matrix consisting of all ones. 
It follows that
\begin{eqnarray*}
 & & \|S^*(t)\tilde V^*(t)S^*(t)\|_{\max} \\
& \le &
\|(S^*(t)\tilde V^*(t)-q_ne^{2q_n}\mathbb{I})S^*(t)\|_{\max}+q_ne^{2q_n}\|S^*(t)\|_{\max}\\
& \le & \|(S_0(t)\tilde V(t)-q_ne^{2q_n}\mathbb{I})\|_{\max}\times \max_{1\le j\le 2n-1}\sum_{k=1}^{2n-1}|s_{kj}^*(t)|+q_ne^{2q_n}\|S^*(t)\|_{\max}\\
& = & O(q_ne^{3q_n}).
\end{eqnarray*}
That is, 
\[
\|S^*(t)h\mu_{12}\tilde V^*(t)S^*(t)\|_{\max}=O(hq_ne^{3q_n}).
\]
With similar arguments as in the proof of \eqref{eq-lemma-SV-diff}, 
we have $S^*(t)V^*(t)=\mathbb{I}+q_ne^{2q_n}(C_n-\mathbb{I})$
and $\|S^*(t)(C_n-\mathbb{I})\|_{\max}=q_ne^{3q_n}/n$. 
These facts imply
\begin{eqnarray*}
& & \|S^*(t)\mathbb{E}\{\bar V(t,\tau)\}S^*(t)-\mu_{0} S^*(t)\|_{\max} \\
& \le &
\|S^*(t)\mathbb{E}\{\bar V(t,\tau)\}S^*(t)-\mu_{0} S^*(t)V^*(t)S^*(t)\|_{\max}\\
& & +\mu_{0} \|S^*(t)V^*(t)S^*(t)- S^*(t)\|_{\max}\\
& = & O\big(hq_ne^{2q_n}+q_ne^{3q_n}/n\big)\rightarrow 0.
\end{eqnarray*}
%\begin{align*}
%\langle U_i(t),U_i(t)\rangle & \rightarrow\mu_{02}v_{i,i}^*(t)~~(i=1,\dots 2n-1),\\
%\langle U_i(t),U_j(t)\rangle & \rightarrow 0 ~~(i, j=1,\dots, 2n-1, i\neq j).
%\end{align*}
Moreover, for any $\varepsilon>0$, because $\mathcal{K}_{h_1}(x)$ is bounded by $O(h_1^{-1}),$
\begin{align*}
I\Big(\sqrt{\frac{h_1}{n}}\mathcal{K}_{h_1}(s-t)/v_{ii}^*(t)>\varepsilon\Big)=I(O(e^{q_n}(nh_1)^{-1/2})>\varepsilon)\rightarrow 0,\\
I\Big(\sqrt{\frac{h_1}{n}}\mathcal{K}_{h_1}(s-t)/v_{2n,2n}^*(t)>\varepsilon\Big)=I(O(e^{q_n}(nh_1)^{-1/2})>\varepsilon)\rightarrow 0
\end{align*}
as $n\to \infty$. Thus, with probability tending to 1,
\begin{align*}
&\frac{h_1}{n}\sum_{j\neq i,j<n}\int_0^{\tau}\mathcal{K}_{h_1}^2(s-t)\frac{e^{\pi_{ij}^*(s)}}{[v_{ii}^*(t)]^2}
I\Big(\sqrt{\frac{h_1}{n}}\mathcal{K}_{h_1}(s-t)/v_{ii}(t)>\varepsilon\Big)ds\\
&+\frac{h_1}{n}\sum_{l=1}^{n-2}\int_0^{\tau}\mathcal{K}_{h_1}^2(s-t)\frac{e^{\pi_{ij}^*(s)}}{[v_{2n,2n}^*(t)]^2}
I\Big(\sqrt{\frac{h_1}{n}}\mathcal{K}_{h_1}(s-t)/v_{2n,2n}(t)>\varepsilon\Big)ds\\
&+\frac{h_1}{n}\int_0^{\tau}\mathcal{K}_{h_1}^2(s-t)e^{\pi_{in}^*(s)}\Big(\frac{1}{v_{ii}^*(t)}+\frac{1}{v_{2n,2n}^*(t)}\Big)^2
I\Big( \xi >\varepsilon\Big)ds\\
&\rightarrow 0,
\end{align*}
where
\[
\xi= \sqrt{\frac{h_1}{n}}\mathcal{K}_{h_1}(s-t)\frac{(v_{ii}^*(t)+v_{2n,2n}^*(t))}{v_{ii}^*(t)v_{2n,2n}^*(t)}.
\]
Thus, by Theorem 5.1.1 of \cite{FH2005}, we conclude that
for any fixed $k$,
$(U_1(t),\dots,U_k)^\top$ converges in distribution to a $k$-dimensional zero-mean normal random vector
with the covariance given by the upper-left $k\times k$ block of $\mu_{0}S^*(t)$.

\end{proof}

\begin{proof}[Proof of Theorem \ref{theorem-central-gamma}]
Note that $\widehat{\eta}(t)=\widehat{\eta}_{\widehat{\gamma}}(t).$
A mean value expansion gives
\[
\widehat Q_c(\widehat{\gamma}(t))
- \widehat Q_c(\gamma^*(t))
=   \frac{\partial \widehat Q_c(\bar\gamma(t))}{\partial \gamma(t)}(\widehat{\gamma}-\gamma^*),
\]
where $\bar{\gamma}(t)=s\gamma^*(t)+(1-s)\widehat{\gamma}(t)$ for some $s\in (0, 1)$.
By the definition of $\widehat{\gamma}(t),$ we get
\[
\sqrt{Nh_2}\{\widehat{\gamma}(t) - \gamma^*(t)\} = -
\Bigg[\frac{\partial \widehat Q_c(\bar\gamma(t))}{\partial \gamma(t)} \Bigg]^{-1}\Bigg[\sqrt{\frac{h_2}{N}} \sum_{i=1}^n\sum_{j=1,j\neq i}^n \xi_{ij}(\widehat{\eta}_{\gamma^*}(t),\gamma^*(t))\Bigg],
\]
where
\[\xi_{ij}(\widehat{\eta}_{\gamma^*}(t),\gamma^*(t))=
\int_{0}^{\tau}\mathcal{K}_{h_2}(s-t)Z_{ij}(s)\Big[dN_{ij}(s)- \exp\big\{\widehat{\pi}_{ij,\gamma^*}(t)\big\}ds\Big],
\]
and
\[
\widehat{\pi}_{ij,\gamma^*}(t)=
\widehat{\alpha}_{i,\gamma^*}(t)+\widehat{\beta}_{j,\gamma^*}(t)+Z_{ij}(s)^\top\gamma^*(t).
\]
%Define
%\begin{align*}
%\bar H_{Q}(\eta(t), \gamma(t))=V_{\gamma,\gamma}(t)-V_{\gamma,\eta}(t)
%\big[V_{\eta,\eta}(t,\gamma(t))\big]^{-1}V_{\eta,\gamma}(t).
%\end{align*}
A direct calculation yields
\begin{align*}
\frac{ \partial \widehat{Q}_c(\bar \gamma(t))}{ \partial \gamma(t)^\top }=
V_{\bar\gamma,\bar\gamma}(t)-V_{\bar\gamma,\hat\eta}(t)
\big[V_{\hat\eta,\hat\eta}(t,\bar\gamma(t))\big]^{-1}V_{\hat\eta,\bar\gamma}(t),
\end{align*}
whose limit is $H_Q(t).$
%We first show that $\partial \widehat Q_c(\gamma^*(t))/\partial \gamma(t)$
%converges in probability to $H_Q(t).$
%Since
%$$
%V_{\gamma^*,\gamma^*}(t,\eta^*(t))=-\frac{1}{N}\sum_{i=1}^n\sum_{j\neq i} Z_{ij}^{\otimes 2}e^{\pi_{ij}^*(t)},
%$$
%by the uniform strong law of large numbers,
%$V_{\gamma^*,\gamma^*}(t,\eta^*(t))$ converges almost surely
%to a nonrandom function $-N^{-1}\sum_{i=1}^n\sum_{j\neq i} E\{Z_{ij}^{\otimes 2}e^{\pi_{ij}^*(t)}\}.$
%By Lemma ???, it suffices to show that
%$V_{\gamma^*,\eta^*}(t)S^*(t)V_{\eta^*,\gamma^*}(t)$ converges in probability
%to $V_{\gamma^*,\eta^*}^*(t)S^*(t)V_{\eta^*,\gamma^*}^*(t).$
%We decomposes $V_{\gamma^*,\eta^*}(t)S^*(t)V_{\eta^*,\gamma^*}(t)$ into
%\begin{align*}
%&V_{\gamma^*,\eta^*}(t)S^*(t)V_{\eta^*,\gamma^*}(t)-V_{\gamma^*,\eta^*}^*(t)S^*(t)V_{\eta^*,\gamma^*}^*(t)\\
%=&V_{\gamma^*,\eta^*}(t)S^*(t)[V_{\eta^*,\gamma^*}(t)-V_{\eta^*,\gamma^*}^*(t)]
%+[V_{\gamma^*,\eta^*}(t)-V_{\gamma^*,\eta^*}^*(t)]S^*(t)V_{\eta^*,\gamma^*}(t)
%\end{align*}
%Note that
%\[
%V_{\eta^*,\gamma^*}(t) = \left(\begin{array}{c}
%\frac{1}{n}\sum_{j\neq 1} e^{\pi_{1j}^*(t)}Z_{1j}^\top, \\
%\vdots \\
%\frac{1}{n}\sum_{j\neq n} e^{\pi_{nj}^*(t)}Z_{nj}^\top,\\
%\frac{1}{n}\sum_{j\neq 1} e^{\pi_{j1}^*(t)}Z_{j1}^\top, \\
%\vdots \\
%\frac{1}{n}\sum_{j\neq n-1} e^{\pi_{j,n-1}^*(t)}Z_{j,n-1}^\top,\\
%\end{array}
%\right).
%\]
Therefore,
\begin{align}\label{eqA13}
\sqrt{Nh_2}\{\widehat{\gamma}(t)- \gamma^*(t)\}
= [H_Q(t)]^{-1}\Bigg[\sqrt{\frac{h_2}{N}} \sum_{i=1}^n\sum_{j\neq i} \xi_{ij}(\widehat{\eta}_{\gamma^*}(t),\gamma^*(t))\Bigg]+o_p(1).
\end{align}
A three-order Taylor expansion of $\xi_{ij}(\widehat{\eta}_{\gamma^*}(t),\gamma^*(t))$ at $\eta^*(t)$ gives
\begin{equation}
\sqrt{\frac{h_2}{N}}\sum_{i=1}^n \sum_{j\neq i} \xi_{ij}(\widehat{\eta}_{\gamma^*}(t),\gamma^*(t))
=B_{51}(t) + B_{52}(t) + B_{53}(t),
\end{equation}
where
\begin{eqnarray*}
B_{51}(t) & = & \sqrt{\frac{h_2}{N}}  \sum_{i=1}^n \sum_{j\neq i} \xi_{ij}(\eta^*(t),\gamma^*(t))\\
& &+ \sqrt{\frac{h_2}{N}}\sum_{i=1}^n \sum_{j\neq i}
\Big[\frac{\partial}{\partial \eta(t)^\top } \xi_{ij}(\eta^*(t),\gamma^*(t))\Big]\{\widehat{\eta}_{\gamma^*}(t) - \eta^*(t)\}, \\
B_{52}(t)  & =  &  \frac{1}{2}\sqrt{\frac{h_2}{N}} \sum_{k=1}^{2n-1} \Big[( \widehat{\eta}_{k,\gamma^*}(t) - \eta_{k}^*(t)) \sum_{i=1}^n \sum_{j\neq i}
\frac{\partial^2 \xi_{ij}(\eta^*(t),\gamma^*(t))}{ \partial \eta_k(t) \partial \eta(t)^\top }
\times \{\widehat{\eta}_{\gamma^*}(t) - \eta^*(t)\} \Big],  \\
B_{53}(t) & = & \frac{1}{6}\sqrt{\frac{h_2}{N}} \sum_{k=1}^{2n-1} \sum_{l=1}^{2n-1}\Bigg\{( \widehat{\eta}_{k,\gamma^*}(t) - \eta_{k}^*(t))
(\widehat{\eta}_{l,\gamma^*}^*(t) - \eta_{l}^*(t))\\
&&~~~~\times\Big[\sum_{i=1}^n \sum_{j\neq i}\frac{ \partial^3 \xi_{ij}(\bar{\eta}_{\gamma^*}(t),\gamma^*(t))}{ \partial \eta_k(t) \partial \eta_l(t) \partial \eta(t)^\top } \Big]
\{\widehat{\eta}_{\gamma^*}(t) - \eta^*(t)\}\Bigg\}.
\end{eqnarray*}
In the above equations, 
$\bar{\eta}_{\gamma^*}(t)=s\widehat{\eta}_{\gamma^*}(t)+(1-s){\eta}^*(t)$ for some $s\in(0,1)$,
and $\widehat\eta_{k,\gamma^*}(t)$ is the $k$th component of $\widehat\eta_{\gamma^*}(t).$
It is sufficient to demonstrate: (i) $B_{51}(t)$ converges in distribution to a normal distribution; (ii) 
$B_{52}(t)=b_*(t)+o_p(1)$, where $b_*(t)$ is given in \eqref{eq-bias}; (iii) $B_{53}(t)$ is an asymptotically negligible remainder term. 
These claims are shown in three steps in an inverse order. 

Step 1. We show $B_{53}(t)=o_p(1)$. 
Calculate $g^{ij}_{kls}(t)=\frac{ \partial^3 \xi_{ij}(\bar{\eta}_{\gamma^*}(t),\gamma^*(t))}{\partial \eta_k(t) \partial \eta_l(t) \partial \eta_s(t)}$ according to the indices $k,l,s$ as follows.
Note that $g^{ij}_{kls}(t)=0$ when $k,l,s\notin \{i,n+j\}.$ %since $\xi_{\gamma_{ij}}(\eta(t),\gamma^*(t))$ only has the arguments $\alpha_i(t)$ and $\beta_j(t)$ in regardless of other $\beta_k$'s ($k\neq i, j$).
So there are only two cases in which  $g^{ij}_{kls}\neq 0$. \\
(1) Only two values among three indices $k, l, s$ are equal.
If $k=l=i$ and $s=n+j$, then
$g^{ij}_{kls}(t)=-Z_{ij}\exp\{\bar\alpha_{i,\gamma^*}(t)+\bar\beta_{j,\gamma^*}(t)+Z_{ij}^\top\gamma^*(t)\}.$
For other cases, the results are similar.\\
(2) If $k=l=s=i$ or $k=l=s=n+j,$
then $g^{ij}_{kls}(t)=-Z_{ij}\exp\{\alpha_i(t)+\beta_j(t)+Z_{ij}^\top\gamma^*(t)\}.$ \\
Therefore, we have
\begin{align*}
B_{53}(t)=&\frac{1}{6}\sqrt{\frac{h_2}{N}} \sum_{k,l,s=1}^{2n-1}g_{kls}^{ij}(t)( \widehat{\eta}_{k,\gamma^*}(t) - \eta_{k}^*(t))
(\widehat{\eta}_{l,\gamma^*}^*(t) - \eta_{l}^*(t))( \widehat{\eta}_{s,\gamma^*}(t) - \eta_{s}^*(t))\\
=&\frac{1}{6}\sqrt{\frac{h_2}{N}}\sum_{i=1}^n\sum_{j=1,j\neq i}^n g_{iii}^{ij}(t)( \widehat{\eta}_{i,\gamma^*}(t) - \eta_{i}^*(t))^3
\\
&+ \frac{1}{6}\sqrt{\frac{h_2}{N}}\sum_{i=1}^n\sum_{j=1,j\neq i}^n g_{jjj}^{ij}(t)( \widehat{\eta}_{n+j,\gamma^*}(t) - \eta_{n+j}^*(t))^3\\
&+\frac{1}{2}\sqrt{\frac{h_2}{N}}\sum_{i=1}^n\sum_{j=1,j\neq i}^n g_{iij}^{ij}(t)( \widehat{\eta}_{i,\gamma^*}(t) - \eta_{i}^*(t))^2(\widehat{\eta}_{n+j,\gamma^*}(t) - \eta_{n+j}^*(t))\\
&+\frac{1}{2}\sqrt{\frac{h_2}{N}}\sum_{i=1}^n\sum_{j=1,j\neq i}^n g_{jji}^{ij}(t)( \widehat{\eta}_{i,\gamma^*}(t) - \eta_{i}^*(t))( \widehat{\eta}_{n+j,\gamma^*}(t) - \eta_{n+j}^*(t))^2
\end{align*}
This, together with the definition of $g_{kls}^{ij}(t)$, implies that
\begin{align*}
\|B_{53}(t)\|_\infty \le2\sqrt{Nh_2}\max_{i,j}
\left\{ \exp\{ \bar{\pi}_{ij, \gamma^*} \|Z_{ij}(t)\|_\infty \right\}
\times \| \widehat{\eta}_{\gamma^*}(t) - \eta^*(t)\|_\infty^3,
\end{align*}
where
\[
\bar{\pi}_{ij, \gamma^*}=\bar \alpha_{i,\gamma^*}(t)+\bar\beta_{j,\gamma^*}(t)+Z_{ij}(t)^\top\gamma^*(t)\}. 
\]
By Lemma \ref{lem6}  and Conditions \ref{condition:CB1}-\ref{condition:PB2}, we have
\begin{align}\label{eqA14}
\|B_{53}(t)\|_\infty =  O_p\Big(\frac{\kappa_n^4 (q_n+1)^3e^{19q_n} (\log nh_1)^{3/2}h_2^{1/2}}{n^{1/2}h_1^{3/2}}\Big)=o_p(1).
\end{align}

Step 2. We show claim (ii). Note that
\begin{align*}
B_{52}(t)  =  &  \frac{1}{2}\sqrt{\frac{h_2}{N}} \sum_{k=1}^{2n-1} \Big[( \widehat{\eta}_{k,\gamma^*}(t) - \eta_{k}^*(t)) \sum_{i=1}^n \sum_{j=1,j\neq i}^n
\frac{\partial^2 \xi_{ij}(\eta^*(t),\gamma^*(t))}{ \partial \eta_k(t) \partial \eta(t)^\top }
\times \{\widehat{\eta}_{\gamma^*}(t) - \eta^*(t)\} \Big]\\
=&\frac{1}{2}\sqrt{\frac{h_2}{N}} \sum_{i=1}^n \sum_{j=1,j\neq i}^n  \frac{\partial^2 \xi_{ij}(\eta^*(t),\gamma^*(t))}
{ \partial \eta_i^2(t)}[\widehat{\eta}_{i,\gamma^*}(t) - \eta_{i}^*(t)]^2\\
&+\frac{1}{2}\sqrt{\frac{h_2}{N}} \sum_{j=1}^n \sum_{i=1,i\neq j}^n  \frac{\partial^2 \xi_{ij}(\eta^*(t),\gamma^*(t))}
{ \partial \eta_{n+j}^2(t)}[\widehat{\eta}_{n+j,\gamma^*}(t) - \eta_{n+j}^*(t)]^2\\
&+\sqrt{\frac{h_2}{N}} \sum_{i=1}^n \sum_{j=1,j\neq i}^n  \frac{\partial^2 \xi_{ij}(\eta^*(t),\gamma^*(t))}
{ \partial \eta_i(t)\partial \eta_{n+j}(t)}[\widehat{\eta}_{i,\gamma^*}(t) - \eta_{i}^*(t)]
[\widehat{\eta}_{n+j,\gamma^*}(t) - \eta_{n+j}^*(t)].
\end{align*}
Then, similar to the calculation in the derivation of the asymptotic bias in Theorem 4 in \cite{G2017}, 
we have
\begin{align}\label{eqA15}
B_{52}(t)=b_*(t)+o_p(1), 
\end{align}
where
\begin{equation}
\label{eq-bias}
b_*(t)=\frac{\mu_{0}}{2Nh_1}\bigg[ \sum_{i=1}^n \frac{  \sum_{j\neq i}\mathbb{E}(Z_{ij}(t) \exp\{\pi_{ij}^*(t)\})}
{\sum_{j\neq i}\mathbb{E}(\exp\{\pi_{ij}^*(t)\})}+\sum_{j=1}^n \frac{  \sum_{i\neq j}\mathbb{E}(Z_{ij}(t) \exp\{\pi_{ij}^*(t)\})}
{\sum_{i\neq j}\mathbb{E}(\exp\{\pi_{ij}^*(t)\})}\bigg],
\end{equation}
and $\pi_{ij}^*=\alpha_i^*(t) + \beta_j^*(t) +Z_{ij}(t)^\top \gamma^*(t)$.

Step 3. We show claim (i). 
Let $\iota_{ij}$ be a $(2n-1)$-dimensional vector
with the $i$th and $(n+j)$th elements being one and others being zero.
%in which its $i$th and $(n+j)$th elements are one and other elements are zero.
For $B_{51}(t),$ we have
\begin{align*}\label{eqA16}
B_{51}(t) = \sqrt{\frac{h_2}{N}}\sum_{i=1}^n\sum_{j=1,j\neq i}^n \tilde{\xi}_{ij} (\eta^*(t), \gamma^*(t)) + o_p(1),
\tag{A.16}
\end{align*}
where
$$
\tilde{\xi}_{ij} (\eta^*(t), \gamma^*(t))=\int_{0}^{\tau}Z_{ij}(s)\mathcal{K}_{h_2}(s-t)d\mathcal{M}_{ij}(s)
-V_{\gamma^*\eta^*}(t)S^*(t)\int_{0}^{\tau}\mathcal{K}_{h_2}(s-t)d\mathcal{M}_{ij}(s)\iota_{ij}.
$$
%and $V_{\gamma^*\eta^*}(t)=\partial Q(\theta^*(t))/\partial \eta(t)^\top.$
For any fixed $t\in[a,b]$, 
with similar arguments as in the proof of Lemma \ref{lem7},
we have that the distribution of $B_{51}(t)$
is approximately normal with mean $0$ and covariance matrix $\mu_{0}\tilde\Sigma(t)$,
where
\begin{align*}
\tilde\Sigma(t)=\frac{1}{N}\sum_{i=1}^n\sum_{j\neq i}\mathbb{E}\bigg[\Big(Z_{ij}(t)
-V_{\gamma^*\eta^*}(t)S^*(t)\iota_{ij}\Big)^{\otimes 2}e^{\pi_{ij}^*(t)}\bigg].
\end{align*}
Here, for any vector $a$ with $a^{\otimes2}=aa^{\top}$,
substituting \eqref{eqA14}-\eqref{eqA16} into \eqref{eqA13} gives
\[
\sqrt{Nh_2}\big(\widehat{\gamma}(t)- \gamma^*(t)- H_Q(t)^{-1} b(t)\big)
= H_Q(t)^{-1}\sqrt{\frac{h_2}{N}}\sum_{i=1}^n \sum_{j\neq i}
\tilde{\xi}_{ij} (\eta^*(t), \gamma^*(t)) + o_p(1),
\]
which converges in distribution to a $p$-dimensional multivariate normal random vector with mean 0
and covariance matrix $\mu_{0}H_Q(t)^{-1}\Sigma(t)H_Q(t).$
It completes the proof.
\end{proof}

\subsection{Proof of Theorem \ref{theorem-central-degree}}
\label{section-proof-th3}
In this section, we present the proof of Theorem \ref{theorem-central-degree}.

\begin{proof}[Proof of Theorem \ref{theorem-central-degree}]
To simplify notations, write $\widehat{\pi}_{ij}(t)=\widehat{\alpha}_i(t)+\widehat{\beta}_j(t)+Z_{ij}(t)^\top \widehat{\gamma}(t)$.
Let $\widehat{\theta}_{ij}(t)=(\widehat{\alpha}_i(t), \widehat{\beta}_j(t), \widehat{\gamma}(t)^\top)^\top$
and ${\theta}_{ij}^*(t)=(\alpha_i^*(t), \beta_j^*(t),  \gamma^*(t)^\top  )^\top.$
By Taylor's expansion, we have
\begin{align}
\nonumber
&F(\widehat{\eta}(t),\widehat{\gamma}(t)) - F(\eta^*(t),\gamma^*(t))\\
\label{eqA17}
 =& V_{\eta^*,\eta^*}(t,\gamma^*(t))\{\widehat{\eta}(t) - \eta^*(t)\}-
 V_{\eta^*,\gamma^*}(t)\{\widehat{\gamma}(t)-\gamma^*(t)\}+n^{-1}g(t),
\end{align}
where %$H_{\eta^*\gamma^*}(t) = -n^{-1}\partial F(\eta^*(t), \gamma^*(t))/\partial \gamma(t)^\top,$
$g(t)=(g_1(t), \ldots, g_{2n-1}(t))^\top$, $g_i(t)=\sum_{j\neq i}g_{ij}(t)~(1\le i\le n)$, $g_{n+j}(t)=\sum_{i\neq j}g_{ij}(t)~(j\in[n-1])$,
and $g_{ij}(t)=\int_0^{\tau}\mathcal{K}_{h_1}(s-t)\bar g_{ij}(s,t)ds$ with
\[
\bar g_{ij}(s,t)=  (\widehat{\theta}_{ij}(t)-\theta_{ij}^*(t))^\top
\begin{pmatrix}
 e^{\tilde{\pi}_{ij}(s,t)} & e^{\tilde{\pi}_{ij}(s,t)}  & e^{\tilde{\pi}_{ij}(s,t)}Z_{ij}(s)^\top \\
 e^{\tilde{\pi}_{ij}(s,t)} & e^{\tilde{\pi}_{ij}(s,t)}  & e^{\tilde{\pi}_{ij}(s,t)}Z_{ij}(s)^\top \\
 e^{\tilde{\pi}_{ij}(s,t)}Z_{ij}(s) & e^{\tilde{\pi}_{ij}(s,t)} Z_{ij}(s) & e^{\tilde{\pi}_{ij}(s,t)}Z_{ij}(s)Z_{ij}(s)^\top
\end{pmatrix}(\widehat{\theta}_{ij}(t)-\theta_{ij}^*(t)).
\]
Here, $\tilde{\pi}_{ij}(t)$ lies between $\pi_{ij}^*(t)$ and $\widehat{\pi}_{ij}(t)$.
Recall that $V(t)=V_{\eta^*,\eta^*}(t,\gamma^*(t))$.
Because $F(\widehat{\eta}(t),\widehat{\gamma}(t))=0$,
\eqref{eqA17} is equivalent to
\begin{eqnarray*}
\widehat{\eta}(t) - \eta^*(t)
  =   -[V(t)]^{-1} F(\eta^*(t),\gamma^*(t)) +  [V(t)]^{-1} V_{\eta^*,\gamma^*}(t)\{\widehat{\gamma}(t)-\gamma^*(t)\}
 - \frac{1}{n}[V(t)]^{-1}g(t).
\end{eqnarray*}
The remainder of the proof is to show that the first term in the right-hand side of the above 
equation is asymptotically normal, and the second and third terms are asymptotically 
negligible. These claims are shown in the following three steps. 

Step 1. We show 
\begin{equation}\label{eq-proof-th3-2}
[V(t)]^{-1}g(t)=o_p((nh_1)^{-1/2}).
\end{equation}
Note that $|g_{ij}(t)-g_{ij}^*(t)|=O(\kappa_ne^{q_n}h_1^2)$, 
where 
\[
\bar g_{ij}^*(t)=  (\widehat{\theta}_{ij}(t)-\theta_{ij}^*(t))^\top
\begin{pmatrix}
 e^{\tilde{\pi}_{ij}(t)} & e^{\tilde{\pi}_{ij}(t)}  & e^{\tilde{\pi}_{ij}(t)}Z_{ij}(t)^\top \\
 e^{\tilde{\pi}_{ij}(t)} & e^{\tilde{\pi}_{ij}(t)}  & e^{\tilde{\pi}_{ij}(t)}Z_{ij}(t)^\top \\
 e^{\tilde{\pi}_{ij}(t)}Z_{ij}(t) & e^{\tilde{\pi}_{ij}(t)} Z_{ij}(t) & e^{\tilde{\pi}_{ij}(t)}Z_{ij}(t)Z_{ij}(t)^\top
\end{pmatrix}(\widehat{\theta}_{ij}(t)-\theta_{ij}^*(t)).\]
A direct calculation gives
\begin{eqnarray*}
g_{ij}^*(t) & = &  e^{\tilde{\pi}_{ij}(t)} [(\widehat{\alpha}_i(t)-\alpha_i^*(t))^2 +  (\widehat{\beta}_j(t)-\beta_j^*(t))^2 + 2(\widehat{\alpha}_i(t)-\alpha_i^*(t))(\widehat{\beta}_j(t)-\beta_j^*(t))]
\\
&& + 2e^{\tilde{\pi}_{ij}(t)}Z_{ij}(t)^\top ( \widehat{\gamma}(t) - \gamma(t)) (\widehat{\alpha}_i(t)-\alpha_i^*(t)+\widehat{\beta}_j(t)-\beta_j^*(t))\\
&&+e^{\tilde{\pi}_{ij}(t)}( \widehat{\gamma}(t) - \gamma(t))^\top Z_{ij}(t)Z_{ij}(t)^\top( \widehat{\gamma}(t) - \gamma^*(t)).
\end{eqnarray*}
Note that $\kappa_n := \max_{i,j} \|Z_{ij}(t) \|$ and $e^{\tilde{\pi}_{ij}(t)}\le e^{2q_n}$. By Theorem \ref{theorem:consistency} and Condition \ref{condition:PB2},
we have
\begin{align}
\nonumber
|g_{ij}(t)| \le & 4e^{2q_n} \| \widehat{\eta}(t)- \eta^*(t)\|_\infty^2
+ 4\kappa_n e^{2q_n}\| \widehat{\eta}(t) - \eta^*(t)\|_\infty \| \widehat{\gamma}(t)-\gamma^*(t) \|\\
\nonumber
&+ e^{2q_n}\kappa_n^2 \| \widehat{\gamma}(t)-\gamma^*(t) \|^2 \\
\label{eqA18}
\le & 2e^{2q_n}[4 \| \widehat{\eta}(t) - \eta^*(t)\|_\infty^2+  \kappa_n^2\| \widehat{\gamma}(t)-\gamma^*(t) \|^2].
\end{align}
Write $( V(t)^{-1} g(t))_i=(S(t) g(t))_i+(W(t)g(t))_i$,
where $W(t)=V(t)^{-1}- S(t)$.
For $i\in[n]$, a direct calculation yields 
\begin{align*}
n^{-1}(S(t) g(t))_i=\frac{g_i(t)}{v_{ii}(t)}+\frac{\sum_{j=1}^{n-1} g_{jn}(t)}{nv_{2n,2n}(t)}.
\end{align*}
By \eqref{eqA18} and Theorem \ref{theorem:consistency},
we have 
\begin{align*}
|n^{-1}(S(t) g(t))_i|\le &2e^{3q_n}\{4 \| \widehat{\eta}(t) - \eta^*(t)\|_\infty^2+  \kappa_n^2\| \widehat{\gamma}(t)-\gamma^*(t) \|^2\}\\
=&O_p\Big((q_n+1)^2e^{41q_n}\kappa_n^6\frac{\log nh_1}{nh_1}\Big).
\end{align*}
In addition, by Lemma \ref{lem1}, we have
\begin{align*}
\|n^{-1}W(t)g(t)\|_\infty \le  \|W(t) \|_{\max} \|g(t)\|_\infty=O_p\Big((q_n+1)^2e^{45q_n}\kappa_n^6\frac{\log nh_1}{nh_1}\Big).
\end{align*}
Therefore, if $(q_n+1)^2e^{45q_n}\kappa_n^6\log nh_1/\sqrt{nh_1}=o(1),$ then
\begin{equation*}
\|n^{-1}W(t)g(t)\|_\infty=o_p\big((nh_1)^{-1/2}\big).
\end{equation*}
This shows \eqref{eq-proof-th3-2}. 

Step 2. We show
\begin{equation}
\label{eq-proof-th3-3}
[V(t)]^{-1}V_{\eta^*\gamma^*}(t)\{\widehat{\gamma}(t)-\gamma^*(t)\}=o_p((nh_1)^{-1/2}).
\end{equation}
Note that
\[
V_{\eta^*,\gamma^*}(t) = \left(\begin{array}{c}
\frac{1}{n}\sum_{j\neq 1} \int_0^{\tau}\mathcal{K}_{h_1}(s-t)e^{\pi_{1j}^*(s,t)}Z_{1j}(s)^\top ds, \\
\vdots \\
\frac{1}{n}\sum_{j\neq n}\int_0^{\tau}\mathcal{K}_{h_1}(s-t) e^{\pi_{nj}^*(s,t)}Z_{nj}(s)^\top ds,\\
\frac{1}{n}\sum_{j\neq 1}\int_0^{\tau}\mathcal{K}_{h_1}(s-t) e^{\pi_{j1}^*(s,t)}Z_{j1}(s)^\top ds, \\
\vdots \\
\frac{1}{n}\sum_{j\neq n-1}\int_0^{\tau}\mathcal{K}_{h_1}(s-t) e^{\pi_{j,n-1}^*(s,t)}Z_{j,n-1}(s)^\top ds,\\
\end{array}
\right).
\]
We have
\[
\|V_{\eta^*,\gamma^*}(t)(\widehat{\gamma}(t)-\gamma^*(t)) \|_\infty \le (2-1/n) e^{q_n}\kappa_n\|\widehat{\gamma}(t)-\gamma^*(t)\|.
\]
This, together with Theorem \ref{theorem-central-gamma}, yields
\begin{align*}
\|S(t)V_{\eta^*,\gamma^*}(t)(\widehat{\gamma}(t)-\gamma^*(t))\|_\infty\le & \max_{i} \frac{1}{v_{ii}(t)} \|V_{\eta^*\gamma^*}(t) (\widehat{\gamma}(t)-\gamma^*(t))\|_\infty\\
& +\frac{|\sum_{j=1}^{n-1}e^{\pi_{jn}(t)}Z_{jn}(t)^\top(\widehat{\gamma}(t)-\gamma^*(t))|}{v_{2n,2n}(t)}\\
= & O_p\left(\frac{e^{2q_n}\kappa_n}{n\sqrt{h_2}}\right).
\end{align*}
Furthermore, we have
\begin{align*}
\|W(t)V_{\eta^*\gamma^*}(t)(\widehat{\gamma}(t)-\gamma^*(t))\|_\infty
\le&(2n-1)\|W(t)\|_{\max} \|V_{\eta^*\gamma^*}(t) (\widehat{\gamma}(t)-\gamma^*(t))\|_\infty \\
=&O_p\left(\frac{e^{6q_n}\kappa_n}{n\sqrt{h_2}}\right).
\end{align*}
This, together with $h_2=O(h_1^2)$ and $e^{6q_n}\kappa_n/\sqrt{n}=o(1)$, gives
\eqref{eq-proof-th3-3}.

Step 3 is a combination step. 
By \eqref{eq-proof-th3-2} and \eqref{eq-proof-th3-3}, we have
\begin{align*}
\sqrt{nh_1}\big\{\widehat{\eta}_i(t)-\eta_i^*(t)\big\}=
\Bigg\{S^*(t)\sqrt{\frac{h_1}{n}}\int_{0}^{\tau}\mathcal{K}_{h_1}(s-t)d\tilde{\mathcal{M}}(s)\Bigg\}_i+o_p(1).
\end{align*}
By Lemma \ref{lem7},
we have that for any fixed $k$,
$\big[(\mu_{0}S^*(t))^{-1/2}\{\widehat{\eta}_i(t)-\eta^*(t)\}\big]_{1:L}$ converges in distribution to a
$k$-dimensional standard normal random vector.
It completes the proof.
\end{proof}

\begin{figure*}[h]
	\centering
	\subfloat[Subfigure 1 list of figures text][$t=0.6$]{
		\includegraphics[width=0.4\textwidth]{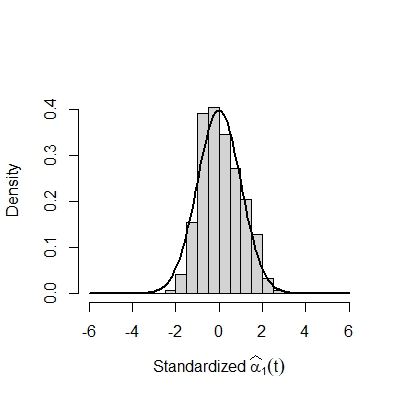}}
	\subfloat[Subfigure 3 list of figures text][$t=0.8$]{
		\includegraphics[width=0.4\textwidth]{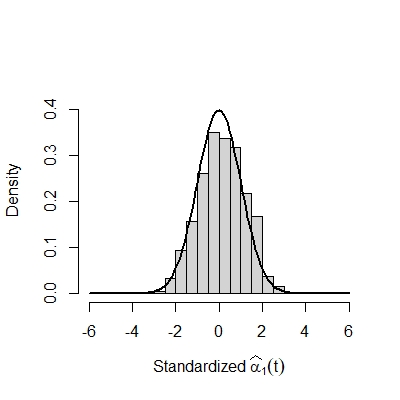}}\\
  	\subfloat[Subfigure 1 list of figures text][$t=0.6$]{
		\includegraphics[width=0.4\textwidth]{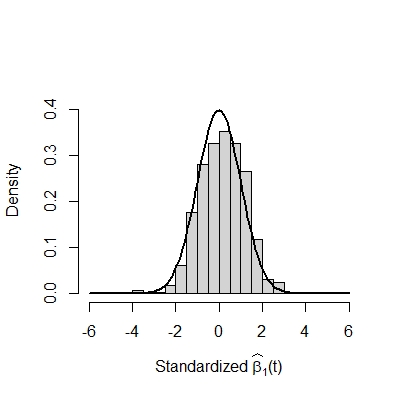}}
	\subfloat[Subfigure 3 list of figures text][$t=0.8$]{
		\includegraphics[width=0.4\textwidth]{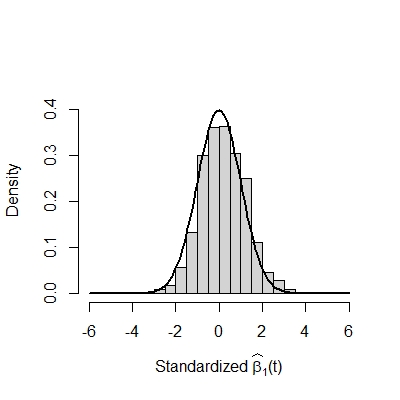}}\\
  	\subfloat[Subfigure 1 list of figures text][$t=0.6$]{
		\includegraphics[width=0.4\textwidth]{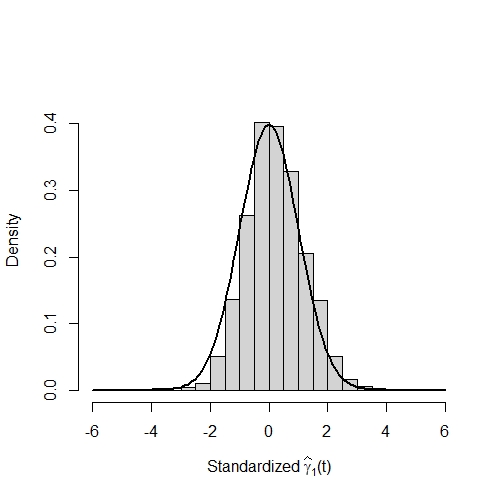}}
	\subfloat[Subfigure 3 list of figures text][$t=0.8$]{
		\includegraphics[width=0.4\textwidth]{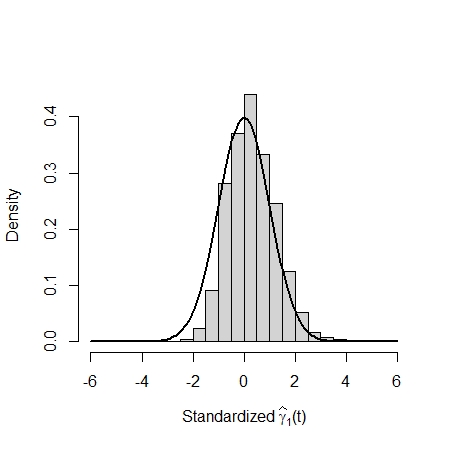}}
	\qquad
	\caption*{Figure S1. Simulation results: Asymptotic normality for standardized $\widehat\alpha_1(t)$, $\widehat\beta_1(t)$ and $\widehat\gamma_1(t)~(t=0.6~\text{and}~0.8)$ with $n=500.$}
	\label{fig:asym}
\end{figure*}

\begin{figure*}[h]
	\centering
	\subfloat[Subfigure 1 list of figures text][In-degree]{
		\includegraphics[width=0.4\textwidth]{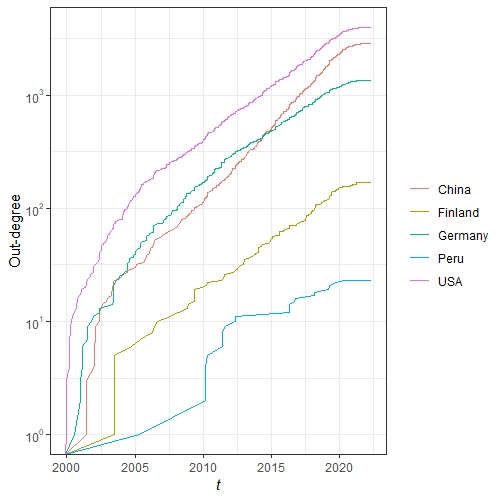}}
	\subfloat[Subfigure 3 list of figures text][Out-degree]{
		\includegraphics[width=0.4\textwidth]{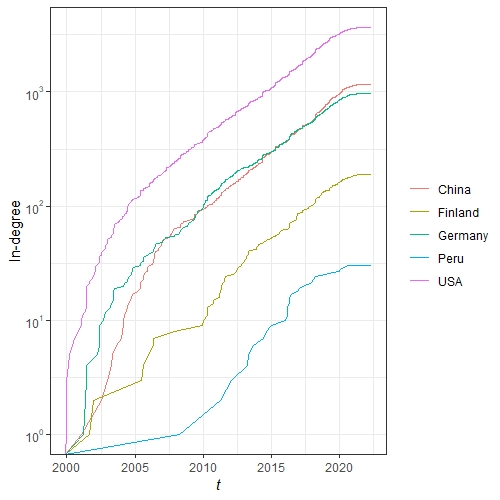}}
	\caption*{Figure S2. Real data analysis: The curves of the in- and out-degrees of 5 selected countries.}
	% \label{fig:cons}
\end{figure*}

\begin{figure*}[h]
	\centering
	\subfloat[Subfigure 1 list of figures text][In-degree]{
		\includegraphics[width=0.4\textwidth]{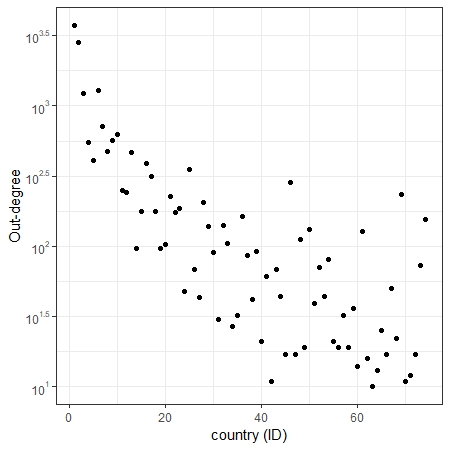}}
	\subfloat[Subfigure 3 list of figures text][Out-degree]{
		\includegraphics[width=0.4\textwidth]{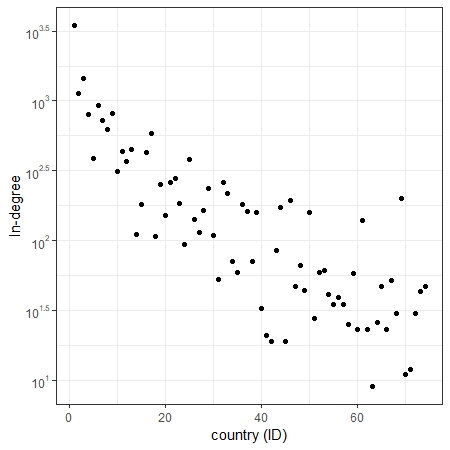}}
	\caption*{Figure S3. Real data analysis: The total in- and out-degrees of 74 countries from Jan. 2000 to Apr. 2022.}
	% \label{fig:inout}
\end{figure*}

{}

\end{document}